\newtheorem{theorem}{Theorem}[section]
\newtheorem{proposition}[theorem]{Proposition}
\newtheorem{lemma}[theorem]{Lemma}
\newtheorem{claim}{Claim}[theorem]
\newtheorem{corollary}[theorem]{Corollary}
\newtheorem{observation}[theorem]{Observation}
\newenvironment{subproof}[1][\proofname]{
    
    \begin{proof}[#1]}{\end{proof}}
\newcommand\abs[1]{\lvert #1\rvert}
\newcommand{\tww}{\operatorname{tww}}
\newcommand{\E}{\mathrm{E}}
\newcommand{\Var}{\mathrm{Var}}
\newcommand{\DeltaR}{\Delta\!^R}
\newcommand{\rdeg}{\operatorname{rdeg}}
\newcommand{\leqnomode}{\tagsleft@true\let\veqno\@@leqno}
\newcommand{\reqnomode}{\tagsleft@false\let\veqno\@@eqno}
\newcommand\ltag[1]{\tag*{\textbf{#1}}}
\begin{document}
\date{\today}
\title{Twin-width of random graphs%
\thanks{All the authors were supported by the Institute for Basic Science (IBS-R029-C1). Jungho Ahn was also supported by the KIAS Individual Grant (CG095301) at Korea Institute for Advanced Study. Debsoumya Chakraborti was also supported by the European Research Council (ERC) under the European Union Horizon 2020 research and innovation programme (grant agreement No.\ 947978).}}
\author[1]{Jungho Ahn}
\author[2]{Debsoumya Chakraborti}
\author[3]{Kevin Hendrey} 
\author[4,3]{Donggyu Kim}
\author[3,4]{Sang-il Oum}
\affil[1]{School of Computational Sciences, Korea Institute for Advanced Study (KIAS), Seoul,~Korea}
\affil[2]{Mathematics Institute, University of Warwick, Coventry, UK}
\affil[3]{Discrete Mathematics Group, Institute for Basic Science (IBS), Daejeon,~Korea}
\affil[4]{Department of Mathematical Sciences, KAIST, Daejeon,~Korea}
\affil[ ]{\small\textit{Email addresses:} 
\texttt{junghoahn@kias.re.kr},
\texttt{debsoumya.chakraborti@warwick.ac.uk},
\texttt{kevinhendrey@ibs.re.kr},
\texttt{donggyu@kaist.ac.kr},
\texttt{sangil@ibs.re.kr}}
\maketitle

\begin{abstract}
    We investigate the twin-width of the Erd\H{o}s-R\'enyi random graph $G(n,p)$.
    We unveil a surprising behavior of this parameter by showing the existence of a constant $p^*\approx0.4$ such that with high probability, when $p^*\le p\le1-p^*$, the twin-width is asymptotically $2p(1-p)n$, whereas, when $0<p<p^*$ or $1>p>1-p^*$, the twin-width is significantly higher than $2p(1-p)n$.
    In addition, we show that the twin-width of $G(n,1/2)$ is concentrated around $n/2-\sqrt{3n\log n}/2$ within an interval of length $o(\sqrt{n\log n})$.
    For the sparse random graph, we show that with high probability, the twin-width of $G(n,p)$ is $\Theta(n\sqrt{p})$ when $(726\ln n)/n\le p\le1/2$.
\end{abstract}

\section{Introduction}\label{sec:intro}

Twin-width is a graph parameter recently introduced by Bonnet, Kim, Thomass\'{e}, and Watrigant~\cite{twin-width1}.
Typically, twin-width is defined in terms of contractions on trigraphs; see Section~\ref{sec:prelim} for the formal definition.
Equivalently, the twin-width of a graph, denoted by $\tww(G)$, is the minimum integer~$d$ such that there is a way to move from the partition of the vertex set into parts having a single vertex each to the trivial partition having only one part consisting of all vertices by iteratively merging a pair of parts such that at every stage of the process, every part has both an edge and a non-edge to at most~$d$ other parts.
Such a sequence of partitions is equivalent to a $d$-contraction sequence.
Graphs of twin-width~$0$ are precisely cographs, which are graphs that can be built from copies of $K_1$ by taking disjoint unions and complements.

Twin-width is receiving a lot of attention not only because many interesting classes of graphs have bounded twin-width but also because in every class of graphs of bounded twin-width, any property expressible in first-order logic can be decided in linear time as long as the corresponding $d$-contraction sequence is given as an input~\cite{twin-width1}.
Furthermore, every class of graphs of bounded twin-width admits a function~$f$ such that the chromatic number of every graph $G$ in the class is bounded from above by $f(\omega(G))$, where $\omega(G)$ is the clique number of $G$~\cite{twin-width3}, which unifies and also simplifies known theorems on the $\chi$-boundedness of certain graph classes.
Graphs of bounded tree-width, graphs of bounded clique-width, planar graphs, 
graphs embeddable on a fixed surface, and all proper minor-closed graph classes are proved to have bounded twin-width~\cite{twin-width1}.
Twin-width has generated a huge amount of interest~\cite{twin-width1,twin-width2,twin-width3,twin-width4,twin-width6,GPT2022,BKRTW2021,BH2021,DGJOR2021,ST2021,BNOST2021,SS2021,modelcounting,chibounded}.

We investigate the twin-width of the Erd\H{o}s-R\'enyi random graph $G(n,p)$, which is a graph on a vertex set $\{1,2,\ldots,n\}$ chosen randomly such that each unordered pair of vertices is adjacent with probability $p$ independently at random.
Since the twin-width is invariant under taking the complement graph, we may focus on $G(n,p)$ with $p\le1/2$, because $G(n,p)$ and $G(n,1-p)$ have the same twin-width distribution.
We say that $G(n,p)$ satisfies a property $\mathcal{P}$ \emph{with high probability} if the probability that $\mathcal{P}$ holds on $G(n,p)$ converges to~$1$ as~$n$ goes to infinity.

First, we show that the twin-width of $G(n,p)$ is proportional to $n\sqrt{p}$ with high probability if $(726\ln n)/n\le p\le1/2$ for all sufficiently large~$n$.
Previously, Ahn, Hendrey, Kim, and Oum~\cite{AHKO2021} showed the following lower bound.

\begin{theorem}[Ahn, Hendrey, Kim, and Oum~\cite{AHKO2021}]\label{thm:ahko}
    For every~$\varepsilon>0$, if $p:=p(n)$ satisfies $1/n\le p\le1/2$, then with high probability,
    \begin{linenomath*}\[ 
        \tww(G(n,p))>2p(1-p)n-(2\sqrt2+\varepsilon)\sqrt{p(1-p)n\ln n}.
    \]\end{linenomath*}
\end{theorem}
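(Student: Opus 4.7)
The plan is to argue that in \emph{any} contraction sequence of $G(n,p)$ the red degree produced at the very first merge already exceeds the claimed bound, with high probability. Every contraction sequence must start from the singleton partition and first merge two singletons $\{u\},\{v\}$ into a pair $P=\{u,v\}$, after which the partition consists of $P$ together with $n-2$ singletons. By definition, $P$ and a singleton $\{w\}$ form a non-homogeneous pair precisely when $w$ has exactly one of $u,v$ as a neighbor, so
\[
\rdeg(P)=X_{uv}:=|\{w\in V(G)\setminus\{u,v\}:|N(w)\cap\{u,v\}|=1\}|.
\]
Under $G(n,p)$ the indicators that $w$ is adjacent to exactly one of $u,v$ are independent $\mathrm{Bernoulli}(2p(1-p))$ across $w\in V(G)\setminus\{u,v\}$, so $X_{uv}\sim\mathrm{Bin}(n-2,\,2p(1-p))$ with mean $\mu:=2p(1-p)(n-2)$.

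The core estimate is the Chernoff lower-tail inequality $\Pr[X_{uv}\le\mu-t]\le\exp(-t^2/2\mu)$. Setting $t=(2\sqrt2+\varepsilon)\sqrt{p(1-p)n\ln n}$, a short calculation gives $t^2/(2\mu)=(2\sqrt2+\varepsilon)^2\ln n/4>(2+\Omega(\varepsilon))\ln n$, so the tail probability is at most $n^{-2-\Omega(\varepsilon)}$. A union bound over the $\binom{n}{2}$ possible first-merge pairs then shows that with high probability, \emph{every} pair $\{u,v\}\subseteq V(G)$ satisfies
\[
X_{uv}>2p(1-p)n-(2\sqrt2+\varepsilon)\sqrt{p(1-p)n\ln n},
\]
the constant gap $\mu-2p(1-p)n=-4p(1-p)$ being absorbed by shrinking $\varepsilon$ slightly. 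Since an arbitrary contraction sequence must begin by merging some pair, the red degree at its first step already exceeds the displayed quantity, so $\tww(G(n,p))$ does too.

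The main obstacle in executing this plan is matching the constant $2\sqrt2$ tightly. The factor $\sqrt2$ arises from the mean-scale coefficient in $2\mu\sim 4p(1-p)n$ appearing in the Chernoff denominator, while the factor $2$ is forced by needing the exponent to exceed $2\ln n$ so as to beat the $\binom{n}{2}\sim n^2/2$ union bound. Both factors match exactly only when the variance-sensitive Chernoff form $\exp(-t^2/2\mu)$ is used; a Hoeffding-type estimate with a worst-case variance proxy would blur one of them and produce a weaker constant. A secondary point is that when $p$ is near the lower limit $1/n$ the mean $\mu$ is only $O(1)$ and the right-hand side of the claimed inequality is already non-positive, so the argument becomes vacuous in that regime, but the theorem holds trivially there since $\tww\ge 0$.
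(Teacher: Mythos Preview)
Your proposal is correct. Note, however, that Theorem~\ref{thm:ahko} is not proved in this paper at all: it is quoted from~\cite{AHKO2021}, and the present paper only remarks (at the start of Section~\ref{subsec:dense}) that the main idea there is exactly the one you use, namely that $r_G(u,v)=|(N_G(u)\triangle N_G(v))\setminus\{u,v\}|\sim\mathrm{Bin}(n-2,2pq)$, followed by a Chernoff lower-tail bound and a union bound over $\binom{n}{2}$ pairs. So your argument matches what the paper attributes to~\cite{AHKO2021}.

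What the paper itself proves is the sharper Theorem~\ref{thm:lower}, replacing the constant $2\sqrt{2}$ by $\sqrt{6(1-2pq)}$. The improvement comes from two refinements you do not use: (i)~Lemma~\ref{lem:tinygoodpairs}, which tracks not just the first contraction but the first $b=g(n)$ contractions (with $\sqrt{n}\ll g(n)\ll\sqrt{n\ln n}$), so that it suffices to bound the \emph{number} of pairs with small $r_G(u,v)$ rather than to rule out all such pairs; and (ii)~the variance-sensitive tail estimate of Lemma~\ref{lem:binomUpper}, which puts $2pq(1-2pq)$ rather than $2pq$ in the denominator of the exponent. Combined with Markov's inequality on the expected count of ``good'' pairs, this yields the exponent $3/2$ (beating the $n^{1/2}$ scale of $g(n)$) in place of the exponent $2$ your union-bound argument requires, and hence the smaller leading constant.
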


They also showed that the twin-width of an $n$-vertex graph is at most $n/2+o(n)$, and so when $1/n\le p\le1/2$, the twin-width of $G(n,p)$ is between $(p-\varepsilon)n$ and $\frac{(1+\varepsilon)}{2}n$ with high probability for any positive constant~$\varepsilon$.
We improve this and determine that $n\sqrt{p}$ is the exact function when $p\ge(726\ln n)/n$.

\begin{theorem}\label{thm:main3}
    Let $p:=p(n)$ such that $\frac{726\ln n}{n}\le p(n)\le1/2$ for all sufficiently large~$n$.
    With high probability, $\tww(G(n,p))=\Theta(n\sqrt{p})$.
\end{theorem}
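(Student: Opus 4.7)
The theorem has two parts: $\tww(G(n,p)) = O(n\sqrt p)$ and $\tww(G(n,p))=\Omega(n\sqrt p)$, both whp. The lower bound is the new contribution, since Theorem~\ref{thm:ahko} only gives $\tww\gtrsim 2p(1-p)n=o(n\sqrt p)$ in the sparse range.

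For the upper bound I would construct an explicit contraction sequence. Fix a partition of $V(G)$ into $k=\lceil n\sqrt p\rceil$ groups $V_1,\ldots,V_k$ of sizes $\approx 1/\sqrt p$, and process them in order, merging the vertices of each $V_i$ one at a time into a single super-vertex. Two estimates control the maximum red degree. First, if $S\subseteq V_i$ is a partial super-vertex of current size $m\le 1/\sqrt p$, then a singleton $v$ outside is red-adjacent to $S$ with probability $1-(1-p)^m-p^m=O(pm)$, so the expected number of such red neighbours is $O(pmn)=O(n\sqrt p)$. Second, once $V_j$ is fully contracted, the pair $(V_i,V_j)$ has $|V_i||V_j|\approx 1/p$ and is mixed with probability $\to 1-e^{-1}$, yielding $(1-e^{-1}+o(1))\,n\sqrt p$ red neighbours per super-vertex. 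Chernoff bounds and a union bound over the at most $n^2$ bad (part, step) pairs make the max red degree $O(n\sqrt p)$ whp.

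For the lower bound, suppose $\tww(G(n,p))<d:=c_0\,n\sqrt p$ for a small constant $c_0>0$, and consider the step of a $d$-contraction sequence where the partition has exactly $k:=5d$ parts; the red-degree bound forces at least $\binom{k}{2}-kd/2\ge (2/5)k^2$ pairs of parts to be \emph{pure}. For a fixed partition with sizes $(a_1,\ldots,a_k)$, the events ``$(A_i,A_j)$ is pure'' are independent Bernoulli with $q_{ij}=p^{a_ia_j}+(1-p)^{a_ia_j}\le 2e^{-p a_ia_j}$. When the partition is roughly balanced, $a_ia_j\approx 1/(c_0^2p)$ and $q_{ij}\le 2e^{-1/c_0^2}$, so $\sum q_{ij}\ll k^2$ for $c_0$ small. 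Chernoff then gives failure probability $\exp(-\Omega(k^2))$ for each such partition, and a union bound over the at most $k^n$ partitions succeeds because the hypothesis $p\ge 726\ln n/n$ makes $k^2\asymp c_0^2 n^2 p\gg n\ln n$.

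The main obstacle is controlling $\sum q_{ij}$ uniformly across unbalanced size profiles, where many singletons force $q_{ij}=1$ on singleton pairs. My plan is to rule out such partitions directly: if some part $A$ has size $|A|=\Omega(\ln n/p)$, then a Chernoff bound on $\mathrm{Bin}(|A|,p)$ together with a union bound over the other parts shows that whp every non-empty other part is mixed with $A$, so the red degree of $A$ exceeds $d$, contradicting the step being a step of a $d$-contraction sequence. Otherwise all parts have size $O(\ln n/p)$, which is a narrow size range in which the balanced Chernoff argument adapts after stratifying partitions by their size profile (so that the number of profiles is absorbed by the union bound). I expect the most delicate part of the proof to be tuning these thresholds so that both branches succeed uniformly over $p\in[726\ln n/n,\,1/2]$, especially in the borderline regime where $|A|$ is just above $n/k$ and neither argument is tight.
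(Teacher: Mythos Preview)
Your upper bound is fine and is essentially the contraction sequence underlying the cited edge-count bound; the paper simply quotes $\tww(G)=O(\sqrt{|E(G)|})$ together with $|E(G)|\le 2p\binom{n}{2}$ whp.

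Your lower-bound strategy has a real gap. You correctly identify that unbalanced partitions are the obstacle, but neither branch of your proposed fix works. In your ``large part'' branch, for a fixed contraction sequence with a part $A$ of size $\ge C\ln n/p$, the events ``$B$ is pure with~$A$'' are independent with total mean $\le 2k\,e^{-p|A|}$, so the probability of seeing $\ge 4d$ pure neighbours of $A$ is at best $2^{-\Theta(d)}=\exp(-\Theta(n\sqrt p))$. When $p=\Theta(\ln n/n)$ this is only $\exp(-\Theta(\sqrt{n\ln n}))$, nowhere near the $\exp(-\omega(n\ln n))$ required to survive the union bound over $\binom{n}{2}^n$ contraction sequences. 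In your second branch, the residual size range $[1,\,O(\ln n/p)]$ is not narrow in the sparse regime (it is $[1,\Theta(n)]$ at $p=\Theta(\ln n/n)$), so stratifying by size profile does not recover the balanced-case bound. More decisively, your pure-pair count gives no contradiction at all for partitions with $\ge 0.9k$ singletons, since $\binom{0.9k}{2}\ge(2/5)\binom{k}{2}$ pure pairs are then present \emph{deterministically}; nothing in your scheme rules such partitions out with the needed probability.

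The paper avoids the balance problem by a different accounting. Instead of fixing the number of parts, it stops at the first step~$s$ at which at least half the vertex mass lies in parts of size $\ge\lceil 3/\sqrt p\rceil$. It then \emph{re-bundles}: the large parts are chopped into disjoint sets $T_1,\dots,T_k$ of size exactly $\lceil 3/\sqrt p\rceil$ (each inside a single part), and the small parts are greedily grouped into bundles $S_1,\dots,S_{k'}$ with $|S_j|\in[3/(2\sqrt p),\,3/\sqrt p)$. This forces $|\mathcal T|,|\mathcal S|=\Theta(n\sqrt p)$ regardless of the shape of the original partition. For each pair $(T_i,S_j)$ one has $|T_i||S_j|\ge 9/(2p)$, so it is mixed with probability $\ge 1-e^{-9/2}$, while the fully-joined case is negligible since $|T_i|\ge 5$ gives $p^{|T_i|}\le p^5$. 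A Chernoff bound on the bipartite ``red'' edges yields failure probability $\exp(-\Omega(|\mathcal T||\mathcal S|))=\exp(-\Omega(n^2p))$, which beats the union bound precisely when $p\ge C\ln n/n$ (this is exactly where the constant $726$ comes from). The idea you are missing is this re-bundling step, which turns an arbitrary partition into a balanced bipartite instance with uniform pairwise probabilities.
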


Ahn, Hendrey, Kim, and Oum~\cite{AHKO2021} computed thresholds for $\tww(G(n,p))\in\{0,1,2\}$.
In particular, they showed that $\tww(G(n,p))\le2$ with high probability if $p\le cn^{-1}$ for any constant $c\in(0,1)$.
Determining the twin-width asymptotically when $\frac{1}{n}\le p<\frac{726\ln n}{n}$ remains open.
In Proposition~\ref{prop:not tiny 1}, we will show that if $\omega(n^{-1})\le p\le1/2$, then the twin-width of $G(n,p)$ is unbounded.

Second, we focus on the situation when~$p$ is a nonzero constant.
Each of Theorems~\ref{thm:ahko} or \ref{thm:main3} tells us that the twin-width of $G(n,p)$ grows linearly in~$n$ for this case, but these theorems do not decide the coefficient of the linear term in $\tww(G(n,p))$.
We show that there is a threshold $p^*\approx 0.4013$ such that if $0<p<p^*$, then there exists $c>0$ depending on $p$ such that with high probability,
\begin{linenomath*}\[
    \tww(G(n,p))>(2p(1-p)+c)n
\]\end{linenomath*}
and if $p^*<p\le1/2$, then with high probability, 
\begin{linenomath*}\[
    \tww(G(n,p))=2p(1-p)n+o(n),
\]\end{linenomath*}
proving that the lower bound given by Theorem~\ref{thm:ahko} is tight up to the first-order term in this case, and furthermore we also determine the second-order term of $\tww(G(n,p))$.
We were told that Behague et al.~\cite{private} independently studied the twin-width of~$G(n,p)$.

\begin{theorem}\label{thm:main}
    There exists $p^*\in(0.4012,0.4013)$ such that the following hold for a real $p$ and $q:=1-p$.
    \begin{enumerate}[label=\rm(\arabic*)]
        \item\label{item:main1} If $p^*<p\le1/2$, then with high probability,
        \begin{linenomath*}\[
            \tww(G(n,p))=2pqn-\sqrt{6pq(1-2pq)n\ln n}+o(\sqrt{n\ln n}).
        \]\end{linenomath*}
        \item\label{item:main2} 
        If $0<p<p^*$, then there exists $c:=c(p)>0$ such that with high probability,
        \begin{linenomath*}\[
            \tww(G(n,p))>(2pq+c)n.
        \]\end{linenomath*}
    \end{enumerate}
\end{theorem}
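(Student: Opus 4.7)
The plan is to prove both directions of \ref{item:main1} by analyzing intermediate partitions in a carefully ordered contraction sequence, and to prove \ref{item:main2} via a case analysis over partition regimes.

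\textbf{Part \ref{item:main1}, lower bound.} I would refine Theorem \ref{thm:ahko} by examining a larger intermediate partition. Set $t := \lceil \sqrt{n} \rceil$. In any contraction sequence the number of size-$2$ blocks (pairs) changes by at most one per step, so some partition contains exactly $t$ pairs. At this partition, the red degree of every pair $P = \{u, v\}$ is at least $\abs{N(u) \triangle N(v)} - O(t) = \abs{N(u) \triangle N(v)} - o(\sqrt{n \ln n})$, where the correction accounts for symmetric-difference vertices lying in non-singleton blocks. The key probabilistic claim is that with high probability, every set of $t$ disjoint pairs in $V(G(n,p))$ contains some pair with symmetric difference at least $2pqn - \sqrt{6pq(1-2pq)n \ln n} - o(\sqrt{n \ln n})$. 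This follows from a first-moment bound: the expected number of pairs with symmetric difference below $2pqn - (\sqrt{3}+\varepsilon)\sqrt{2pq(1-2pq)n \ln n}$ is $n^{(1-2\sqrt{3}\varepsilon)/2} = o(\sqrt{n})$ for any fixed $\varepsilon > 0$. The coefficient $\sqrt{6pq(1-2pq)} = \sqrt{3}\cdot\sqrt{2pq(1-2pq)}$ arises because the effective sample is the $\sqrt{n}$-th smallest of $\binom{n}{2}$ approximately-Gaussian symmetric differences, which lies about $\sqrt{2 \ln(\binom{n}{2}/\sqrt{n})} = \sqrt{3 \ln n + O(1)}$ standard deviations below the mean. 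The choice $t = \sqrt{n}$ is the unique optimum balancing the first-moment exponent against the correction $O(t)$.

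\textbf{Part \ref{item:main1}, upper bound.} I would construct an explicit contraction sequence. By a second-moment argument, with high probability there is a near-perfect matching $\{P_1, \ldots, P_{\lfloor n/2 \rfloor}\}$ in $G(n,p)$ such that, after reordering by increasing symmetric difference, $s_t := \abs{N(u_t) \triangle N(v_t)}$ satisfies $s_t \leq 2pqn - \sqrt{2pq(1-2pq)(4 - 2\log_n t)\,n \ln n} + o(\sqrt{n \ln n})$ for every $t$. Contracting $P_1, \ldots, P_{\lfloor n/2\rfloor}$ in this order, the max red degree at stage $t$ is bounded by $s_t(1 - 2t/n) + t(1 - p^4 - q^4) + o(\sqrt{n \ln n})$, which is maximized near $t = \sqrt{n}$ at the target $T^* := 2pqn - \sqrt{6pq(1-2pq)n \ln n} + o(\sqrt{n \ln n})$. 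After the matching phase, pairs are merged into size-$4$ blocks, then size-$8$, and so on in a balanced schedule; at each subsequent partition the max red degree is at most (number of blocks) $\cdot (1 - p^k - q^k)$ for the current block size $k$, and since the block count shrinks geometrically while $1 - p^k - q^k$ is bounded above by $1$, every later stage stays well below $T^*$.

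\textbf{Part \ref{item:main2}.} For $p < p^*$, the plan is to establish a linear gap $\tww(G(n,p)) - 2pqn = \Omega(n)$ by analyzing the contraction sequence through a sequence of partition regimes (many pairs with singletons; triples with pairs and singletons; larger blocks) and proving for each a lower bound on max red degree that is a specific function of $p$. For small enough $p$, the non-homogeneity probabilities between blocks of moderate sizes are too small to compensate for the symmetric-difference-driven contribution from pair-blocks, so no regime achieves max red degree $\leq 2pqn + o(n)$. The threshold $p^*$ is precisely the value of $p$ where the optimal regime's achievable max red degree equals $2pqn$; solving this optimization numerically yields $p^* \approx 0.4013$.

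\textbf{Main obstacle.} The hardest step is the lower bound of \ref{item:main2}: whereas \ref{item:main1} has a clean extreme-value interpretation pinning down both bounds via a single partition stage, \ref{item:main2} requires ruling out every contraction strategy (including those using mixtures of blocks of varying sizes) for $p$ just below $p^*$, and identifying the correct extremal block-size profile that witnesses the transition at $p^*$. A secondary technical difficulty for the upper bound of \ref{item:main1} is showing that the second-moment method produces a matching whose \emph{entire} ordered profile of symmetric differences tracks the extreme-value curve within $o(\sqrt{n \ln n})$ error, not merely at the worst index.
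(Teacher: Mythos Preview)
Your lower bound for \ref{item:main1} is essentially the paper's argument (Lemma~\ref{lem:tinygoodpairs} plus a first-moment bound), and your plan for \ref{item:main2} points in the right direction, though it is too vague to assess. The real problem is your upper bound for \ref{item:main1}.

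\textbf{The doubling schedule fails for $p$ near $p^*$.} After your matching phase you have $n/2$ pairs. When you merge two of them into a size-$4$ block $S$, its expected red-degree is approximately $\frac{n}{2}(1-p^{8}-q^{8})$, since each of the remaining pairs $T$ satisfies $\abs{S}\abs{T}=8$. Your claim that this is ``well below $T^*$'' is simply false for $p$ just above $p^*$: for instance at $p=0.42$ one has $2pq\approx 0.4872$ while $\frac{1}{2}(1-p^{8}-q^{8})\approx 0.4931$, so the first size-$4$ block already has expected red-degree exceeding $2pqn$, let alone $2pqn-\Theta(\sqrt{n\ln n})$. A short computation shows that $\frac{1}{2}(1-p^{8}-q^{8})<2pq$ holds only for $p$ roughly above $0.46$, so your construction cannot cover the full range $(p^*,1/2]$. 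Stopping the matching phase earlier does not help: the function $t\mapsto (n-2t)(1-p^{4}-q^{4})+t(1-p^{8}-q^{8})$ is decreasing in $t$ and its minimum at $t=n/2$ is exactly the quantity above.

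This is precisely why the paper does \emph{not} use balanced doubling. After forming $\lfloor\alpha(p)n\rfloor$ pairs it instead passes through \emph{triples}: it adds a single vertex to some of the pairs before ever creating size-$4$ blocks, with the proportion $\alpha(p)$ chosen so that the expected red-degree of a triple is exactly $2pqn+o(n)$ (see the definition of $\alpha$ in~\eqref{eq:alpha} and Lemma~\ref{lem:exposingB}). Only once enough vertices have been absorbed into triples are pairs merged into size-$4$ blocks, and the threshold $p^*$ is exactly the value at which the size-$4$ stage of \emph{this} schedule becomes tight (Lemma~\ref{lem:ab}\ref{lem:ab long ineq}). The paper also avoids the dependency issue you flag as a ``secondary technical difficulty'' by selecting the first $n^{1/2+\varepsilon}$ good pairs from a small set $A$ using only the bipartite graph $G[A,B]$, so that the randomness in $G[B]$ is still available for the later fixed schedule; your proposal to pick a full matching by second moment and then analyse later phases would have to confront that conditioning directly.
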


As a corollary, we deduce the following for $G(n,1/2)$.

\begin{corollary}\label{cor:main}
    The twin-width of $G(n,1/2)$ is $\frac{n}{2}-\frac{\sqrt{3n\ln n}}{2}+o(\sqrt{n\ln n})$ with high probability.
\end{corollary}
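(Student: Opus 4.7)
The plan is to deduce Corollary~\ref{cor:main} as an immediate specialization of Theorem~\ref{thm:main}\ref{item:main1}. Since the threshold given by the theorem satisfies $p^*<0.4013<1/2$, the hypothesis $p^*<p\le 1/2$ is met by $p=1/2$, so we may apply that part of the theorem directly.

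The only computation is to substitute $p=q=1/2$ into the two-term expansion. For the first-order term, $2pqn = 2\cdot\tfrac12\cdot\tfrac12\cdot n = n/2$. For the second-order term, $6pq(1-2pq) = 6\cdot\tfrac14\cdot(1-\tfrac12) = 3/4$, so
\[
\sqrt{6pq(1-2pq)\,n\ln n} \;=\; \sqrt{\tfrac34\, n\ln n} \;=\; \tfrac{\sqrt{3n\ln n}}{2}.
\]
Combining these with the error term $o(\sqrt{n\ln n})$ from Theorem~\ref{thm:main}\ref{item:main1} gives exactly the claimed expression. Since Theorem~\ref{thm:main}\ref{item:main1} already holds with high probability, the corollary holds with high probability as well.

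There is no genuine obstacle here, as all the work is inside Theorem~\ref{thm:main}\ref{item:main1}; the proof of the corollary is a one-line evaluation. The only thing to double-check is the numerical inequality $p^*<1/2$ so that the hypothesis is satisfied at $p=1/2$, which is guaranteed by the explicit bound $p^*\in(0.4012,0.4013)$ stated in the theorem.
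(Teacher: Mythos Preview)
Your proposal is correct and matches the paper's approach exactly: the paper states Corollary~\ref{cor:main} immediately after Theorem~\ref{thm:main} without a separate proof, treating it as the direct specialization of Theorem~\ref{thm:main}\ref{item:main1} at $p=1/2$. Your verification that $p^*<1/2$ and your arithmetic for the two terms are precisely what is needed.
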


One natural question for a graph parameter is to determine its maximum value across all graphs with a fixed number of vertices.
Ahn, Hendrey, Kim, and Oum~\cite{AHKO2021} showed that the twin-width of an $n$-vertex graph is bounded from above by $n/2+\sqrt{n\ln n}/2+o(\sqrt{n\ln n})$
and proved that an $n$-vertex Paley graph has twin-width exactly $(n-1)/2$.
They asked whether there exist $n$-vertex graphs of twin-width larger than $(n-1)/2$.
Corollary~\ref{cor:main} implies that the twin-width of the random graph $G(n,1/2)$ is far less than $(n-1)/2$, differing by at least $\Omega(\sqrt{n\ln n})$.
This behavior of twin-width is different from that of rank-width introduced by Oum and Seymour~\cite{OS2004}.
Like twin-width, rank-width is almost invariant under graph complementation; the rank-width of a graph and that of its complement differ by at most $1$.
By definition, the rank-width of an $n$-vertex graph is bounded from above by $\lceil n/3\rceil$ and Lee, Lee, and Oum~\cite{LLO2012} showed that if $p\in(0,1)$ is a constant, then the rank-width of $G(n,p)$ is $\lceil n/3\rceil-O(1)$ with high probability.
This means that unlike twin-width, the rank-width of $G(n,p)$ is with high probability within an additive constant of the maximum possible value.

We remark that the twin-width is not a monotonic parameter, meaning that the twin-width of a subgraph of a graph~$G$ can be more than the twin-width of~$G$.
It is true however that if~$p_1$ and~$p_2$ are constant with $p^*<p_1<p_2\le1/2$, then $\tww(G(n,p_1))\le\tww(G(n,p_2))$ with high probability by Theorem~\ref{thm:main}.
Moreover, the upper bound arguments to prove Theorem~\ref{thm:main} in Section~\ref{sec:upper} can also be used to show that if~$p_1$ and~$p_2$ are constant with $p_1\le p^*<p_2\le1/2$, then $\tww(G(n,p_1))\le\tww(G(n,p_2))$ with high probability.
In fact, we suspect that this also holds when $p_1<p_2\le p^*$, but this remains open.

\medskip
\noindent\textbf{Organization.}
In Section~\ref{sec:prelim}, we introduce the definitions of a trigraph and twin-width, and summarize some standard terminology and results from graph theory and probability theory.
Section~\ref{sec:upper} is devoted to the proof of the upper bound in Theorem~\ref{thm:main}\ref{item:main1}.
In Section~\ref{sec:lower}, we prove the lower bounds in Theorem~\ref{thm:main} and prove Theorem~\ref{thm:main3}.
The proofs of some technical inequalities are relegated to the appendix.

\section{Preliminaries}\label{sec:prelim}

In this paper, all (tri)graphs are simple and finite.
For a graph~$G$ and a set $X\subseteq V(G)$, let $G[X]$ be the subgraph of~$G$ whose vertex set is~$X$ and whose edge set consists of all edges of~$G$ whose both ends belong to~$X$.
For disjoint sets $A,B\subseteq V(G)$, let $G[A,B]$ be the bipartite subgraph of~$G$ with $V(G[A,B])=A\cup B$ whose edge set consists of all edges in~$G$ incident to one vertex in~$A$ and another vertex in~$B$.
We denote by $\delta(G)$ the minimum degree of~$G$ and by $\Delta(G)$ the maximum degree of~$G$.

We start with a standard lemma that allows us to find a subgraph with large minimum degree of a given graph.

\begin{proposition}[See~{\cite[Proposition~1.2.2]{Diestel2018}}]\label{prop:dense subgraph}
    Let $G$ be a graph with at least one edge.
    Then~$G$ has a subgraph $H$ with minimum degree $\delta(H)>\abs{E(G)}/\abs{V(G)}$.
\end{proposition}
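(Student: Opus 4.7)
The plan is to run a greedy vertex-deletion process that preserves a lower bound on the edge-density, and to check that it must terminate at a subgraph with the required minimum degree. Set $d := \abs{E(G)}/\abs{V(G)}$, and define the density $\varepsilon(H) := \abs{E(H)}/\abs{V(H)}$ for any subgraph $H$ with at least one vertex. The goal is to produce a subgraph $H$ with $\varepsilon(H) \ge d$ and $\delta(H) > d$; the second condition is exactly what the statement asks for.

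The construction is iterative. First I would set $G_0 := G$ and then, given $G_i$, check whether there is a vertex $v_i \in V(G_i)$ with $\deg_{G_i}(v_i) \le d$. If so, I would set $G_{i+1} := G_i - v_i$; otherwise, I would stop and return $H := G_i$. The key computation to carry out is showing that the invariant $\varepsilon(G_i) \ge d$ is preserved: if $\abs{E(G_i)} \ge d\abs{V(G_i)}$ and $\deg_{G_i}(v_i) \le d$, then
\[
\abs{E(G_{i+1})} \;=\; \abs{E(G_i)} - \deg_{G_i}(v_i) \;\ge\; d\abs{V(G_i)} - d \;=\; d\bigl(\abs{V(G_i)} - 1\bigr) \;=\; d\abs{V(G_{i+1})},
\]
so $\varepsilon(G_{i+1}) \ge d$. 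Since $\abs{V(G_i)}$ strictly decreases at each step, the process must halt in finitely many steps.

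Finally, I would verify that the output $H$ has the required properties. The process cannot delete every vertex, because $d > 0$ (as $G$ has at least one edge) and the invariant $\varepsilon(G_i) \ge d$ would be violated once we reached the edgeless or empty graph; hence $H$ has at least one vertex. Since the process stopped at $H$, no vertex of $H$ has degree at most $d$, which is precisely $\delta(H) > d = \abs{E(G)}/\abs{V(G)}$. There is no real obstacle in this argument; the only subtle point is ensuring the threshold used in the stopping rule is $d$ (not the average degree $2d$), which is exactly what makes the invariant $\varepsilon(G_i) \ge d$ survive a deletion. Writing the argument carefully with this threshold is the main thing to get right.
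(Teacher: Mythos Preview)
Your proof is correct and is exactly the standard greedy vertex-deletion argument from Diestel~\cite[Proposition~1.2.2]{Diestel2018}. The paper does not supply its own proof of this proposition; it simply cites Diestel, so there is nothing further to compare.
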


\subsection{Twin-width}

For a positive integer~$t$ and a set~$S$, $\binom{S}{t}$ is the set of all $t$-element subsets of~$S$ and $[t]:=\{1,\ldots,t\}$.
For sets~$S$ and~$T$, $S\triangle T:=(S\setminus T)\cup(T\setminus S)$ and is called the \emph{symmetric difference} of~$S$ and~$T$.

A \emph{trigraph} is a triple $G=(V,B,R)$ for disjoint sets $B,R\subseteq\binom{V}{2}$.
For a trigraph $G=(V,B,R)$, we denote by $V(G):=V$ the set of \emph{vertices}, by $B(G):=B$ the set of \emph{black edges}, by $R(G):=R$ the set of \emph{red edges}, and by $E(G):=B\cup R$ the set of \emph{edges}.
We identify a graph $H=(V,E)$ with a trigraph $H=(V,E,\emptyset)$.
For a set $S\subseteq V(G)$, $G\setminus S$ is the trigraph obtained from~$G$ by deleting the vertices in~$S$ and the edges incident with vertices in~$S$.
If $S=\{v\}$, then we may write $G\setminus v$ for $G\setminus\{v\}$.

For a vertex~$v$ of a trigraph~$G$, the \emph{degree} of~$v$, denoted $\deg(v)$, is the number of edges of~$G$ incident with~$v$, and the \emph{red-degree} of $v$, denoted $\rdeg(v)$, is the number of red edges of~$G$ incident with~$v$.
Let $N_G^R(v)$ be the set of vertices~$w$ such that $vw \in R(G)$ and let $\DeltaR(G)$ be the maximum red-degree of~$G$.

For distinct vertices~$u$ and~$v$ of~$G$, not necessarily adjacent, $G/\{u,v\}=(V',B',R')$ is the trigraph such that for a new vertex $w\notin V$, we have that $V'=(V\setminus\{u,v\})\cup\{w\}$ and $G\setminus\{u,v\}=(G/\{u,v\})\setminus w$, and for each $x\in V'\setminus\{w\}$, the following hold:
\begin{itemize}
    \item $wx\in B'$ if and only if $\{ux,vx\}\subseteq B$,
    \item $wx\notin B'\cup R'$ if and only if $\{ux,vx\}\cap(B\cup R)=\emptyset$, and
    \item $wx\in R'$, otherwise.
\end{itemize}
We say that $G/\{u,v\}$ is obtained from~$G$ by \emph{contracting~$u$ and~$v$}.
Let $r_G(u,v)$ be the red-degree of the new vertex in $G/\{u,v\}$ obtained by contracting~$u$ and~$v$.
Note that if $G$ is a graph, then $r_G(u,v)=\abs{(N_G(u)\triangle N_G(v))\setminus\{u,v\}}$.

A \emph{partial contraction sequence} from~$G_1$ to~$G_t$ is a sequence $G_1,\ldots,G_t$ of trigraphs such that for each $i\in[t-1]$, $G_{i+1}$ is obtained from~$G_i$ by contracting two vertices.
If $\DeltaR(G_i)\le d$ for every $i\in[t]$, then we call the sequence a \emph{partial $d$-contraction sequence}.
A \emph{contraction sequence} of~$G$ is a partial contraction sequence from~$G$ to a $1$-vertex graph.
A \emph{$d$-contraction sequence} of~$G$ is a partial $d$-contraction sequence from~$G$ to a $1$-vertex graph.
The \emph{twin-width} of~$G$ is the minimum~$d$ such that there is a $d$-contraction sequence of~$G$.
We denote by $\tww(G)$ the twin-width of~$G$.

For a partial contraction sequence $\sigma=G_1,G_2,\ldots,G_t$ from~$G_1$ to~$G_t$, we define a function $\sigma:V(G_1)\to V(G_t)$, using the same symbol with the sequence, as follows.
For each $v\in V(G_1)$, $\sigma(v):=v$ if~$v$ is not contracted, and otherwise $\sigma(v)$ is the vertex of~$G_t$ to which~$v$ is contracted.
For a set $S\subseteq V(G_t)$, $\sigma^{-1}(S)$ is the preimage of~$S$ under~$\sigma$.
If $S=\{v\}$, then we may write $\sigma^{-1}(v)$ for $\sigma^{-1}(\{v\})$.
For $s\in[t]$ and a positive integer $i$, we denote by $C^\sigma_{s,i}$ the number of vertices~$v$ of~$G_s$ such that $\abs{\sigma_s^{-1}(v)}=i$, where $\sigma_s=G_1,\ldots,G_s$ is the subsequence consisting of the first $s$ terms of $\sigma$.
We will use the next observation in the proof of Theorem~\ref{thm:main2}.
We may omit the superscript $\sigma$ if it is clear from the context.

\begin{observation}\label{obs}
    Let $\sigma=G_1,G_2,\ldots,G_t$ be a partial contraction sequence from an $n$-vertex trigraph~$G_1$ to~$G_t$.
    For every $s\in[t]$, we have that $\sum_{i\ge1}i\cdot C^\sigma_{s,i}=n$.
\end{observation}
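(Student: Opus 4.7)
The plan is to observe that the preimages under $\sigma_s$ form a partition of $V(G_1)$, and then count the elements of $V(G_1)$ by grouping parts according to their sizes.

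More precisely, I would first verify by induction on $s$ that for every $s \in [t]$, the family $\{\sigma_s^{-1}(v) : v \in V(G_s)\}$ is a partition of $V(G_1)$. The base case $s=1$ is immediate since $\sigma_1$ is the identity. For the inductive step, going from $G_s$ to $G_{s+1}$ amounts to contracting two vertices $u,v \in V(G_s)$ into a new vertex $w$, and by definition $\sigma_{s+1}^{-1}(w) = \sigma_s^{-1}(u) \cup \sigma_s^{-1}(v)$ while $\sigma_{s+1}^{-1}(x) = \sigma_s^{-1}(x)$ for all other $x \in V(G_{s+1})$. Thus the partition property is preserved.

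Now, since the preimages partition $V(G_1)$, we have
\[
n = \abs{V(G_1)} = \sum_{v \in V(G_s)} \abs{\sigma_s^{-1}(v)}.
\]
Grouping the vertices $v \in V(G_s)$ according to the value $i = \abs{\sigma_s^{-1}(v)}$, the number of terms equal to $i$ in the right-hand sum is exactly $C^\sigma_{s,i}$, so the sum rewrites as $\sum_{i \ge 1} i \cdot C^\sigma_{s,i}$, giving the desired identity.

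There is no real obstacle here; the statement is essentially a bookkeeping identity, and the only thing to be careful about is making the partition property of preimages explicit before grouping the sum by part size.
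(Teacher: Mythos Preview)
Your proof is correct and is exactly the natural argument: the preimages $\sigma_s^{-1}(v)$ partition $V(G_1)$, so summing their sizes gives $n$, and grouping by size yields the identity. The paper treats this as a self-evident observation and gives no proof, so your write-up simply makes explicit what the paper leaves to the reader.
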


Here is an alternative definition of the twin-width, introduced by Bonnet, Kim, Reinald, and Thomass\'e~\cite{twin-width6}.
For a finite set~$V$, a \emph{partition sequence} of~$V$ is a sequence of partitions $\Pi_1,\ldots,\Pi_{\abs{V}}$ of~$V$ where~$\Pi_1$ is the partition into singletons, and for each $i\in[\abs{V}-1]$, $\Pi_{i+1}$ is obtained from~$\Pi_i$ by merging two parts of~$\Pi_i$.
For a graph $G=(V,E)$ and a partition~$\Pi$ of~$V$, the \emph{red-degree} of a part~$X\in\Pi$ with respect to~$G$ is the number of parts $Y\in\Pi\setminus\{X\}$ such that~$Y$ is neither complete nor anticomplete to $X$ in $G$.
The \emph{width} of a partition~$\Pi$ of~$V$ with respect to~$G$ is the maximum red-degree of parts in~$\Pi$ with respect to~$G$.
The \emph{width} of a partition sequence with respect to~$G$ is the maximum width of partitions in the sequence with respect to~$G$.

For a graph $G=(V,E)$, a partition sequence $\Pi_1,\ldots,\Pi_{\abs{V}}$ of~$V$ \emph{induces} a contraction sequence $G_1,\ldots,G_n$ of~$G$ as follows.
Let $G_1:=G$, and for each $i\in[\abs{V}-1]$, if $\Pi_{i+1}$ is obtained from~$\Pi_i$ by merging two parts~$X$ and~$X'$, then $G_{i+1}$ is obtained from~$G_i$ by contracting the unique two vertices representing~$X$ and~$X'$.
Then it is easy to observe that the width of the partition sequence $\Pi_1,\ldots,\Pi_{\abs{V}}$ with respect to~$G$ is at most~$d$ if and only if the contraction sequence of~$G$ induced by $\Pi_1,\ldots,\Pi_{\abs{V}}$ 
is a $d$-contraction sequence of~$G$. 
Thus, we deduce the following easily. 

\begin{lemma}[Bonnet, Kim, Reinald, and Thomass\'e~\cite{twin-width6}]\label{lem:partionsequence}
    The twin-width of a graph~$G$ equals the minimum width of partition sequences of $V(G)$ with respect to~$G$.
\end{lemma}

We will need the following estimate on the number of partition sequences of $[n]$.

\begin{lemma}\label{lem:number-partition-sequence}
    The number of partition sequences of $[n]$ is at most $\exp(2n\ln n)$.
\end{lemma}
\begin{proof}
    The number of partition sequences of $[n]$ is $\prod_{k=2}^n\binom{k}{2}\le(n^2)^n=\exp(2n\ln n)$.
\end{proof}

\subsection{Probabilistic tools}

We frequently use the following concentration inequalities.

\begin{lemma}[Markov's inequality] \label{markov}
    For a nonnegative random variable $X$ and $t>0$,
    \begin{linenomath*}\[
        \Pr[X\ge t]\le\frac{\E[X]}{t}.
    \]\end{linenomath*}
\end{lemma}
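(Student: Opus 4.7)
The plan is to prove Markov's inequality by the standard indicator trick: bound $X$ pointwise below by a scaled indicator of the event $\{X \geq t\}$, and then take expectations. Concretely, I would introduce the random variable $Y := t \cdot \mathbf{1}[X \geq t]$, which equals $t$ on $\{X \geq t\}$ and $0$ otherwise, and verify the pointwise inequality $X \geq Y$. On the event $\{X \geq t\}$ this is immediate since $X \geq t = Y$. On the complementary event $\{X < t\}$ we have $Y = 0$ and, crucially, $X \geq 0$ by the non-negativity hypothesis, so $X \geq 0 = Y$ still holds. Hence $X \geq Y$ holds deterministically (and in particular almost surely).

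Applying monotonicity of expectation together with the identity $\E[\mathbf{1}[A]] = \Pr[A]$ then yields
\[
\E[X] \;\geq\; \E[Y] \;=\; t \cdot \Pr[X \geq t],
\]
and dividing by $t > 0$ gives the claimed bound.

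There is no real obstacle; the only subtlety worth flagging is that the non-negativity assumption on $X$ is essential and is used precisely in the case $X < t$ to certify the lower bound $X \geq 0 = Y$. Without it, $X$ could take arbitrarily negative values and cancel the contribution of $\{X \geq t\}$ to $\E[X]$, invalidating the inequality. If one preferred a computational route instead of the indicator formulation, the same proof can be written as $\E[X] \geq \E[X \cdot \mathbf{1}[X \geq t]] \geq t \cdot \E[\mathbf{1}[X \geq t]]$, where the first step uses $X \geq 0$ on $\{X < t\}$ and the second uses $X \geq t$ on $\{X \geq t\}$; both presentations are essentially identical.
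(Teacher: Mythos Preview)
Your proof is correct and is the standard argument for Markov's inequality. The paper itself does not supply a proof of this lemma; it is simply stated as a well-known result, so there is nothing to compare against.
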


\begin{lemma}[Chebyshev's inequaity] \label{chebyshev}
    For a random variable $X$ and $t>0$,
    \begin{linenomath*}\[
        \Pr[\abs{X-\E[X]}\ge t]\le\frac{\Var(X)}{t^2}.
    \]\end{linenomath*}
\end{lemma}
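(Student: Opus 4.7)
The plan is to reduce Chebyshev's inequality to Markov's inequality (Lemma~\ref{markov}) by squaring the deviation. Since the event $\abs{X-\E[X]} \geq t$ is identical to the event $(X-\E[X])^2 \geq t^2$, and since $(X-\E[X])^2$ is a non-negative random variable, Markov's inequality becomes applicable with threshold $t^2$.

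Concretely, I would first observe that for $t>0$,
\[
\Pr\bigl[\abs{X-\E[X]} \geq t\bigr] = \Pr\bigl[(X-\E[X])^2 \geq t^2\bigr].
\]
Then I would set $Y := (X-\E[X])^2$, which is non-negative, and apply Lemma~\ref{markov} to $Y$ at the value $t^2>0$, obtaining
\[
\Pr[Y \geq t^2] \leq \frac{\E[Y]}{t^2}.
\]
Finally, I would use the definition $\Var(X) = \E[(X-\E[X])^2] = \E[Y]$ to rewrite the right-hand side as $\Var(X)/t^2$, chaining these equalities and the inequality together to conclude.

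There is no real obstacle: the proof is a two-line standard derivation. The only thing worth being careful about is the implicit assumption that $\E[X]$ exists and is finite (so that the centered variable $X-\E[X]$ is well-defined) and that $\Var(X)$ exists; if $\Var(X)=+\infty$, the inequality is trivially true, so the interesting case is when $X$ has a finite second moment, in which case the argument above goes through verbatim.
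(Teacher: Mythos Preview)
Your proof is correct and is the standard derivation of Chebyshev's inequality from Markov's inequality. The paper itself does not supply a proof of this lemma; it is stated without proof as a well-known preliminary result, so there is nothing further to compare.
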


\begin{lemma}[Chernoff bound; see~{\cite[Chapter 4]{MU2017}}]\label{lem:Chernoffbound}
    Let $X_1,\ldots,X_n$ be mutually independent random variables such that $\Pr[X_i=1]=p_i$ and $\Pr[X_i=0]=1-p_i$ for each $i\in[n]$.
    Let $X:=\sum_{i=1}^nX_i$ and $\mu:=\sum_{i=1}^np_i=\E[X]$.
    Then the following bounds hold.
    \begin{enumerate}[label=(\roman*)]
        \item\label{cond:chernoff1} If $0<\delta\le1$, then
            \begin{linenomath*}\[
                \Pr[X\ge(1+\delta)\mu]\le\exp(-\mu\delta^2/3).
            \]\end{linenomath*}
        \item\label{cond:chernoff2} If $R\ge6\mu$, then 
            \begin{linenomath*}\[
                \Pr[X\ge R]\le2^{-R}.
            \]\end{linenomath*}
        \item\label{cond:chernoff3} If $0<\delta<1$, then 
            \begin{linenomath*}\[
                \Pr[X\le(1-\delta)\mu]\le\exp(-\mu\delta^2/2).
            \]\end{linenomath*}
    \end{enumerate} 
\end{lemma}

We also use the following sharper bound several times, which could be deduced from a more precise version of Chernoff bounds.
We provide a short proof in Appendix~\ref{app:proofs} for the sake of completeness.

\begin{restatable}{lemma}{binomUpper}\label{lem:binomUpper}
    Let $X_1,\ldots,X_n$ be mutually independent random variables such that for each $i\in[n]$, $\Pr[X_i=1]=p_i$ and $\Pr[X_i=0]=1-p_i$.
    Let $X:=\sum_{i=1}^nX_i$ and $p:=\frac{1}{n}\sum_{i=1}^n p_i$.
    If $p\in(0,1)$ and $0<\varepsilon\le3p/10$, then
    \begin{linenomath*}\[
        \Pr[X\le(p-\varepsilon)n]\le\exp\left(-\frac{n\varepsilon^2}{2p(1-p)}+\frac{n\varepsilon^3}{2p^2(1-p)^2}\right).
    \]\end{linenomath*}
\end{restatable}

For a positive integer~$n$ and a real $p\in[0,1]$, 
the \emph{binomial distribution} $\mathcal{B}(n,p)$ is the probability distribution of the random variable $X:=\sum_{i=1}^n X_i$, where $X_1,\ldots,X_n$ are mutually independent Bernoulli random variables such that for each $i\in[n]$, we have $\Pr[X_i=1]=p$ and $\Pr[X_i=0]=1-p$.
The following lemma is an anti-concentration bound for the binomial distribution, whose proof will be provided in Appendix~\ref{app:proofs}.

\begin{restatable}{lemma}{binomLower}\label{lem:binomLower}
    Let $p\in(0,1)$ and $n\ge4$ be an integer.
    Let~$\varepsilon$ be a real such that $\frac{1}{\sqrt{n}}\le\varepsilon\le\min\{\frac{p}{2},1-p\}$.
    Then
    \begin{linenomath*}\[
        \Pr[\mathcal{B}(n,p)\le(p-\varepsilon)n]\ge\frac{1}{2\sqrt{2}}\exp\left(-\frac{n\varepsilon^2}{2p(1-p)}-\frac{3\sqrt{n\varepsilon^2}}{2p(1-p)}-\frac{4n\varepsilon^3}{p^2(1-p)^2}\right).
    \]\end{linenomath*}
\end{restatable}

To estimate error probabilities, we often use the following three lemmas.
The first one is a standard bound on the binomial coefficient.
The second one contains two elementary inequalities, and the third one is a simple corollary of the second lemma.

\begin{lemma}[See~{\cite[Theorem~3.6.1]{MN2009}}]\label{lem:Stirling}
    For integers $1\le k\le n$, we have $\sum_{i=0}^k\binom{n}{i}\le(\frac{en}{k})^k$.
\end{lemma}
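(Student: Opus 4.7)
The plan is to derive this bound via the standard comparison with the full binomial expansion evaluated at a judiciously chosen positive real. Specifically, I would observe that for any $x \in (0,1]$ and each integer $i$ with $0 \leq i \leq k$, we have $x^{i} \geq x^{k}$, and therefore
\[
x^{k} \sum_{i=0}^{k} \binom{n}{i} \;\leq\; \sum_{i=0}^{k} \binom{n}{i}\, x^{i} \;\leq\; \sum_{i=0}^{n} \binom{n}{i}\, x^{i} \;=\; (1+x)^{n},
\]
where the last equality is the binomial theorem. Dividing both sides by $x^{k}$ yields the one-parameter family of bounds $\sum_{i=0}^{k} \binom{n}{i} \leq (1+x)^{n} x^{-k}$, valid for every $x \in (0,1]$.

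The remaining step is to select $x$ so that the resulting expression takes the clean shape $(en/k)^{k}$ in the statement. A convenient (though not strictly optimal) choice is $x := k/n$, which satisfies $0 < x \leq 1$ because $1 \leq k \leq n$ by hypothesis. Using the elementary inequality $(1 + k/n)^{n} \leq e^{k}$, I would then conclude
\[
\sum_{i=0}^{k} \binom{n}{i} \;\leq\; \left(1+\tfrac{k}{n}\right)^{n} \left(\tfrac{n}{k}\right)^{k} \;\leq\; e^{k} \left(\tfrac{n}{k}\right)^{k} \;=\; \left(\tfrac{en}{k}\right)^{k},
\]
which is the desired bound.

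There is essentially no obstacle in this argument; the only mild decision point is the choice of $x$. Exactly optimizing $(1+x)^{n} x^{-k}$ over $x > 0$ gives the minimum at $x = k/(n-k)$, producing the sharper but messier bound $n^{n} / (k^{k} (n-k)^{n-k})$; however, that more refined form is not needed for applications in this paper, and the trade-off value $x = k/n$ already delivers the stated clean inequality in a single line.
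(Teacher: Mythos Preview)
Your proof is correct and is exactly the standard argument; the paper itself does not prove this lemma but merely cites \cite[Theorem~3.6.1]{MN2009}, where the same substitution $x=k/n$ into $(1+x)^n x^{-k}$ is used. There is nothing to add.
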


\begin{lemma}\label{lem:pq}
    Let $p\in[0,1]$, $q:=1-p$, and $k\ge2$ be an integer.
    \begin{enumerate}[label=\rm(\roman*)]
        \item \label{item:pq-1} $\frac{1-p^k-q^k}{k}\le\frac{1-p^{k-1}-q^{k-1}}{k-1}-\frac{2^k-2k-2}{k(k-1)}(pq)^{k/2}$.
        \item \label{item:pq-2} $1-p^k-q^k\le kpq-((k-4)2^{k-2}+2)(pq)^{k/2}$.
    \end{enumerate}
\end{lemma}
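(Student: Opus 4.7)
The plan is to prove both inequalities by expanding $1-p^k-q^k$ and $kpq$ using the identity $p+q=1$, stripping off the extremal terms ($i\in\{1,k-1\}$) so that only interior indices $2\le i\le k-2$ remain, and finally pairing each such term with its symmetric counterpart to apply the AM-GM bound $p^iq^{k-i}+p^{k-i}q^i\ge 2(pq)^{k/2}$.

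For part \ref{item:pq-2}, I rearrange as $kpq-(1-p^k-q^k)\ge((k-4)2^{k-2}+2)(pq)^{k/2}$, write $kpq=kpq(p+q)^{k-2}$, and combine the resulting binomial expansions to obtain
\[
    kpq-(1-p^k-q^k) = \sum_{i=1}^{k-1}\left[k\binom{k-2}{i-1}-\binom{k}{i}\right]p^iq^{k-i}.
\]
Using $\binom{k}{i}=\frac{k(k-1)}{i(k-i)}\binom{k-2}{i-1}$, a short computation shows that the $i=1$ and $i=k-1$ coefficients vanish while the remaining coefficients are nonnegative (since $i(k-i)\ge k-1$ for $2\le i\le k-2$). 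Pairing $i$ with $k-i$, applying AM-GM, and evaluating the combinatorial sum $k(2^{k-2}-2)-(2^k-2k-2)$ yields exactly $(k-4)2^{k-2}+2$.

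For part \ref{item:pq-1}, clearing denominators reduces the inequality to $k(1-p^{k-1}-q^{k-1})-(k-1)(1-p^k-q^k)\ge(2^k-2k-2)(pq)^{k/2}$. The key trick is to multiply $1-p^{k-1}-q^{k-1}$ by $(p+q)=1$, yielding $1-p^{k-1}-q^{k-1}=(1-p^k-q^k)-pq(p^{k-2}+q^{k-2})$. Substituting, the left-hand side collapses to $(1-p^k-q^k)-k\cdot pq(p^{k-2}+q^{k-2})$, which equals $\sum_{i=2}^{k-2}\binom{k}{i}p^iq^{k-i}$ because the $i=1$ and $i=k-1$ terms of $1-p^k-q^k$ each have coefficient $\binom{k}{1}=k$ and are cancelled exactly. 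The same pairing and AM-GM argument, together with $\sum_{i=2}^{k-2}\binom{k}{i}=2^k-2k-2$, finishes the proof.

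The main (and essentially only) conceptual step is spotting the algebraic identity that cancels the extremal terms; once the sum is reduced to its interior part, the AM-GM bound and the closed-form evaluations of the binomial sums are routine. The degenerate cases $k\in\{2,3\}$, where the interior sum is empty and both sides collapse to $0$, can be checked by hand.
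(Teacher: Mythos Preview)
Your proof is correct. For part~(i) your argument is essentially the same as the paper's: both use the identity $1-p^{k-1}-q^{k-1}=(1-p^k-q^k)-pq(p^{k-2}+q^{k-2})$ (the paper states it as the recurrence $1-p^k-q^k=1-p^{k-1}-q^{k-1}+pq(p^{k-2}+q^{k-2})$), reduce the question to bounding $\sum_{i=2}^{k-2}\binom{k}{i}p^iq^{k-i}$, and finish with the AM--GM pairing $p^iq^{k-i}+p^{k-i}q^i\ge 2(pq)^{k/2}$.

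For part~(ii) your route genuinely differs. The paper first establishes the one-step inequality $1-p^k-q^k\le 1-p^{k-1}-q^{k-1}+pq-(2^{k-2}-2)(pq)^{k/2}$ (via $p^{k-2}+q^{k-2}\le 1-(2^{k-2}-2)(pq)^{k/2-1}$) and then telescopes it from $k=2$, summing the error terms with a further estimate $(pq)^{j/2}\ge (pq)^{j/2}(4pq)^{(k-j)/2}$ to collapse everything to $(pq)^{k/2}$. Your approach bypasses the telescoping entirely: writing $kpq=kpq(p+q)^{k-2}$ and subtracting the binomial expansion of $1-p^k-q^k$ gives the closed form $\sum_{i=2}^{k-2}\bigl[k\binom{k-2}{i-1}-\binom{k}{i}\bigr]p^iq^{k-i}$ in one line, after which nonnegativity of the coefficients and the same AM--GM pairing finish the job. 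This is cleaner and avoids the auxiliary bound $(4pq)^{(k-j)/2}\le 1$; the paper's method, on the other hand, makes the recursive structure explicit and yields the intermediate step-by-step inequality as a byproduct.
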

Note that from \ref{item:pq-2}, we deduce that $1-p^k-q^k\le kpq$ for all positive integers~$k$ because $1-p-q=0\le pq$ and $(k-4)2^{k-2}+2\ge0$ for all $k\ge2$.

\begin{proof}
    Observe that $p^k=p^{k-1}-pqp^{k-2}$, $q^k=q^{k-1}-pqq^{k-2}$, and therefore
    \begin{linenomath*}\begin{equation}\label{eq:pq-recurrent}
        1-p^k-q^k=1-p^{k-1}-q^{k-1}+pq(p^{k-2}+q^{k-2}).
    \end{equation}\end{linenomath*}
    
    We first show \ref{item:pq-1}.
    Since $1-p^2-q^2=2pq$ and $1-p^3-q^3=3pq$, \ref{item:pq-1} holds for $k\in\{2,3\}$.
    Thus, we may assume that $k\ge4$.
    We have that
    \begin{linenomath*}\begin{align*}
        &pq(p^{k-2}+q^{k-2})\\
        &=\frac1{k}\left((p+q)^{k}-p^k-q^k-\sum_{i=2}^{k-2}\binom{k}{i}\frac{p^iq^{k-i}+p^{k-i}q^i}{2}\right)\tag*{by the binomial theorem}\\
        &\le\frac{1}{k}\left((1-p^k-q^k)-\sum_{i=2}^{k-2}\binom{k}{i}p^{k/2}q^{k/2}\right)\tag*{by the AM--GM inequality}\\
        &=\frac{1}{k}\left((1-p^k-q^k)-(2^k-2k-2)(pq)^{k/2}\right)\tag*{by the binomial theorem.}
    \end{align*}\end{linenomath*}
    By \eqref{eq:pq-recurrent}, it follows that 
    \begin{linenomath*}\[ 
        \frac{k-1}{k} (1-p^k-q^k)
        \le1-p^{k-1}-q^{k-1} -\frac{1}{k} (2^k-2k-2)(pq)^{k/2}, 
    \]\end{linenomath*}
    which implies \ref{item:pq-1}.
    
    To see \ref{item:pq-2}, we first claim the following inequality:
    \begin{linenomath*}\begin{equation}\label{eq:pq-recurrent2}
        1-p^k-q^k\le1-p^{k-1}-q^{k-1}+pq-(2^{k-2}-2)(pq)^{k/2}.
    \end{equation}\end{linenomath*}
    It is easy to see that the claim holds for $k\in\{2,3\}$, so we may assume that $k\ge4$.
    Then,
    \begin{linenomath*}\begin{align*}
        &p^{k-2}+q^{k-2}\\
        &=(p+q)^{k-2}-\sum_{i=1}^{k-3}\binom{k-2}{i}\frac{p^iq^{k-2-i}+p^{k-2-i}q^i}{2}\tag*{by the binomial theorem}\\
        &\le1-\sum_{i=1}^{k-3}\binom{k-2}{i}(pq)^{k/2-1}\tag*{by the AM--GM inequality}\\
        &=1-(2^{k-2}-2)(pq)^{k/2-1}\tag*{by the binomial theorem.}
    \end{align*}\end{linenomath*}
    Combined with~\eqref{eq:pq-recurrent}, we deduce the claim.

    We now show \ref{item:pq-2} by induction on~$k$.
    Observe that \ref{item:pq-2} is trivially true if $k=2$.
    Thus we may assume that $k\ge3$.
    By~\eqref{eq:pq-recurrent2} and the inductive hypothesis,
    \begin{linenomath*}\begin{equation}
        \label{eq:pq-recurrent3}
        1-p^k-q^k\le kpq-((k-5)2^{k-3}+2)(pq)^{(k-1)/2}-(2^{k-2}-2)(pq)^{k/2}.
    \end{equation}\end{linenomath*}
    Since $\sqrt{pq}\le1/2$, we have that
    \begin{linenomath*}\[
        ((k-5)2^{k-3}+2)(pq)^{(k-1)/2}=((k-5)2^{k-2}+4)\frac{(pq)^{(k-1)/2}}{2}\ge((k-5)2^{k-2}+4)(pq)^{k/2}.
    \]\end{linenomath*}
    Combined with~\eqref{eq:pq-recurrent3}, we derive \ref{item:pq-2}.
    This completes the proof.
\end{proof}

\begin{lemma}\label{lem:pqadd}
    Let $p\in(0,1)$, $q:=1-p$, and $m,n\ge2$ be integers.
    Then
    \begin{linenomath*}\[
        (1-p^m-q^m)+(1-p^n-q^n)>1-p^{m+n}-q^{m+n}.
    \]\end{linenomath*}
\end{lemma}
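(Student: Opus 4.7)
The plan is to rearrange the desired inequality into a product form and then exploit the fact that $m,n\ge 2$ to lower bound each factor in a clean way.

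First, I would move everything to one side and recognize the pattern $1-p^m-p^n+p^{m+n}=(1-p^m)(1-p^n)$, together with its analogue for $q$. Explicitly,
\[
(1-p^m-q^m)+(1-p^n-q^n)-(1-p^{m+n}-q^{m+n})
=(1-p^m)(1-p^n)+(1-q^m)(1-q^n)-1,
\]
so the lemma is equivalent to the assertion that $(1-p^m)(1-p^n)+(1-q^m)(1-q^n)>1$ whenever $p\in(0,1)$ and $m,n\ge2$.

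Next, I would use monotonicity: since $p\in(0,1)$ and $m,n\ge2$, we have $p^m\le p^2$ and $p^n\le p^2$, hence $1-p^m\ge 1-p^2=q(1+p)$ and similarly $1-p^n\ge q(1+p)$. The analogous bounds hold for $q$. Therefore
\[
(1-p^m)(1-p^n)+(1-q^m)(1-q^n)\ge q^2(1+p)^2+p^2(1+q)^2.
\]

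Finally, I would expand the right-hand side and simplify using $p+q=1$:
\[
q^2(1+p)^2+p^2(1+q)^2=p^2+q^2+2pq(p+q)+2p^2q^2=(p+q)^2+2p^2q^2=1+2p^2q^2,
\]
which is strictly greater than $1$ because $p,q\in(0,1)$. This closes the proof. There is no real obstacle here: the only trick is spotting the factorization that turns the given expression into $(1-p^m)(1-p^n)+(1-q^m)(1-q^n)-1$, after which the bound $m,n\ge2$ reduces everything to a two-line identity.
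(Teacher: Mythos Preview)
Your proof is correct. The factorization identity
\[
(1-p^m-q^m)+(1-p^n-q^n)-(1-p^{m+n}-q^{m+n})=(1-p^m)(1-p^n)+(1-q^m)(1-q^n)-1
\]
checks out, the monotonicity bounds $1-p^m\ge 1-p^2$ are valid for $p\in(0,1)$ and $m\ge2$, and the final expansion to $1+2p^2q^2$ is right.

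Your route is genuinely different from the paper's. The paper derives the lemma from the monotonicity result Lemma~\ref{lem:pq}\ref{item:pq-1}, which says the sequence $k\mapsto(1-p^k-q^k)/k$ is nonincreasing (strictly so past $k=3$); from this it reads off $1-p^m-q^m>\tfrac{m}{m+n}(1-p^{m+n}-q^{m+n})$ and the analogous bound with $n$, and adds. That argument reuses machinery already in place for other purposes in the paper. Your argument is self-contained and more elementary: the factorization reduces everything to the extremal case $m=n=2$, where a one-line identity finishes. As a bonus, your method gives the explicit quantitative slack $2p^2q^2$ for free.
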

\begin{proof}
    The lemma follows from adding the following inequalities that hold by Lemma~\ref{lem:pq}\ref{item:pq-1}, 
    \begin{linenomath*}\[
        1-p^n-q^n>\frac{n}{m+n}(1-p^{m+n}-q^{m+n}),
    \]\end{linenomath*}
    \begin{linenomath*}\[
        1-p^m-q^m>\frac{m}{m+n}(1-p^{m+n}-q^{m+n}).\qedhere
    \]\end{linenomath*}
\end{proof}

We say that the event~$A_n$ occurs \emph{with high probability} if the probability of~$A_n$ tends to~$1$ as $n\to\infty$ (in notation, $\Pr[A_n]=1-o(1))$.
When $\Pr[A_n]=1-\exp(-\omega(\ln n))$, we often say that~$A_n$ occurs \emph{with very high probability}.
It is sometimes convenient to use this notion while applying union bound.
In particular, if there are $O(n^c)$ events that hold with very high probability for some constant $c>0$, then their union also holds with very high probability.
Throughout this paper, we implicitly assume that the graphs we consider have sufficiently many vertices to support our arguments whenever necessary.

\section{Upper bounds for the twin-width of random graphs}\label{sec:upper}

We prove the upper bound of Theorem~\ref{thm:main}\ref{item:main1}.
We start with defining the precise value of~$p^*$.
Let $\alpha,\beta:(0,1)\to\mathbb{R}$ be functions such that
\begin{linenomath*}\begin{align}
    \alpha(x)&:=\frac{x(1-x)}{2(1-x^3-(1-x)^3)-(1-x^6-(1-x)^6)}\label{eq:alpha},\\
    \beta(x)&:=\frac{2x(1-x)+(1-x^8-(1-x)^8)-(1-x^{12}-(1-x)^{12})}{3(1-x^8-(1-x)^8)-2(1-x^{12}-(1-x)^{12})},\label{eq:beta}
\end{align}\end{linenomath*}
which are well-defined by Lemma~\ref{lem:pq}\ref{item:pq-1}.
We will show in Lemma~\ref{lem:ab} that there is a unique $p^*\in(0,1/2]$ such that $\alpha(p^*)=\beta(p^*)$, and that $0.4012<p^*<0.4013$.

\begin{theorem}\label{thm:upper2}
    If $p^*<p\le1/2$ and $q:=1-p$, then with high probability,
    \begin{linenomath*}\[
        \tww(G(n,p))\le2pqn-\sqrt{6pq(1-2pq)n\ln n}+o(\sqrt{n \ln n}).
    \]\end{linenomath*}
\end{theorem}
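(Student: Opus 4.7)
The plan is to construct, with high probability over the choice of $G = G(n,p)$, a contraction sequence whose maximum red-degree throughout is at most
\[
T := 2pqn - \sqrt{6pq(1-2pq)n\ln n} + o(\sqrt{n\ln n}).
\]
For distinct singletons $u,v$ the quantity $r_G(u,v) = \abs{(N_G(u) \triangle N_G(v)) \setminus \{u,v\}}$ is distributed as $\bd(n-2, 2pq)$, so by Lemma~\ref{lem:binomLower} the probability that a given pair is ``good'' (meaning $r_G(u,v)\le T$) is only of order $n^{-3/2}$, yielding merely $\Theta(\sqrt{n})$ such good pairs in expectation. Thus no naive matching-based strategy among singletons can produce the target bound in one shot, and the sequence must be constructed in several coordinated phases.

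I would proceed as follows. In the first phase, a carefully chosen collection of initial contractions between singletons creates size-$2$ parts, each with red-degree below~$T$. Each such early contraction also lowers the expected red-degree of every subsequent pair contraction, since the probability that an already-formed size-$2$ part becomes a red-neighbor of a future pair is $1-p^4-q^4$, which is strictly smaller than $4pq$ by Lemma~\ref{lem:pq}\ref{item:pq-2}; the ``savings'' accumulate linearly in the number of existing size-$2$ parts. Once enough initial contractions are performed, the typical red-degree drops below~$T$, and the second phase can continue without requiring further exceptional good pairs, relying instead on Chernoff-style concentration (Lemmas~\ref{lem:Chernoffbound} and~\ref{lem:binomUpper}) together with a union bound to control the red-degree at every subsequent step. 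The second phase builds larger parts according to a schedule determined by whichever of $\alpha(x)$ or $\beta(x)$ is the effective one for the given $p$; these two functions encode two candidate strategies involving parts whose pairwise edge-counts are $\{3,6\}$ and $\{8,12\}$, respectively. The final phase trivially merges the remaining (now few) parts, since the red-degree is then bounded by the number of parts.

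The role of $p^*$ is to certify that at least one of the two candidate strategies yields the bound~$T$: for $p>p^*$, the appropriate schedule is viable. The main obstacle will be uniformly controlling the \emph{maximum} red-degree across every intermediate partition, not just the typical one. A subtle issue is that contracting two parts can actually \emph{increase} the red-degree of a third part (when that part is trivially adjacent to both but in opposite ways), so the order of contractions must be prescribed carefully. The technical heart of the proof is verifying, via union bounds over polynomially many failure events each controlled by Lemma~\ref{lem:binomUpper}, that the constructed sequence never creates a part whose red-degree exceeds~$T$.
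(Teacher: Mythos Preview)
Your high-level phase picture is broadly right, but two substantive ingredients are missing and one is misidentified.

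\textbf{The missing $A/B$ decomposition.} The paper does not look for good pairs among all $\binom{n}{2}$ vertex pairs. Instead it fixes in advance a small set $A$ of size $\lfloor n^{1-\delta}\rfloor$ and a large set $B = V(G)\setminus A$, and the ``good'' pairs in the first phase are pairs $\{u,v\}\subseteq A$ with small $r_{G[A,B]}(u,v)$, i.e.\ small symmetric difference \emph{only into $B$}. This matters for two reasons. First, the first-phase choices depend only on $G[A,B]$, so all of $G[B]$ remains a fresh $G(m,p)$ for the second phase, and the fixed contraction schedule on $B$ can be analyzed by clean Chernoff/union bounds (Lemmas~\ref{lem:single red} and~\ref{lem:exposingB}). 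Your proposal contracts inside the whole graph in Phase~1, which conditions the randomness used in Phase~2 and makes the later union bounds unjustified. Second, restricting to a small $A$ is what makes the second-moment calculation (Lemma~\ref{lem:pairs general}) go through to produce $n^{1/2+\varepsilon}$ disjoint good pairs, not merely the $\Theta(\sqrt n)$ you cite; getting slightly more than $\sqrt n$ such pairs is essential for the savings to beat the fluctuations in Phase~2.

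\textbf{The role of $\alpha$, $\beta$, and $p^*$.} You describe $\alpha$ and $\beta$ as two alternative strategies between which one chooses depending on $p$. That is not how they function here. The upper-bound construction uses a \emph{single} fixed schedule on $B$, parameterized by $\alpha = \alpha(p)$: first form $\lfloor\alpha n\rfloor$ pairs, then grow some of them to triples, then merge pairs of pairs into quadruples (the sets $B_i^{m,a}$ and $\mathcal{B}_i$ of Lemma~\ref{lem:nonrandom}). The value $\alpha(p)$ is chosen precisely so that the expected red-degree of a size-$3$ part equals $2pqn$ (this is the identity $(1-p^3-q^3)(1-2\alpha) + (1-p^6-q^6)\alpha = 2pq$ hidden in the definition of $\alpha$). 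The function $\beta$ does not encode a second strategy; rather, the condition $p>p^*$, equivalently $\alpha(p)<\beta(p)$, is exactly what guarantees (via Lemma~\ref{lem:ab}\ref{lem:ab long ineq}) that with this same $\alpha$ the size-$4$ parts also have expected red-degree strictly below $2pqn$ (Lemma~\ref{lem:exposingB}(iii)). So $p^*$ is not a switch between strategies; it is the threshold below which the single $\alpha$-schedule fails at the size-$4$ stage. Finally, the paper does not control every contraction by a union bound alone: it allows a small exceptional set $L\subseteq B$ of size $O(\sqrt n)$ whose bad contractions are skipped (Lemma~\ref{lem:exposingBA}), and this slack is absorbed into the $o(\sqrt{n\ln n})$ error.
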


\medskip
\noindent\textbf{Overview.}
Prior to a technical proof of Theorem~\ref{thm:upper2}, we provide an overview of the proof with an intuition about why $\tww(G(n,p))$ has different behaviors according to~$p$ compared with~$p^*$.
We will prove Theorem~\ref{thm:upper2} by presenting a specific contraction sequence.
In $G(n,p)$, when we first execute a contraction, the expected red-degree of the new vertex is exactly $2pq(n-2)$, which is already larger than the upper bound in Theorem~\ref{thm:upper2}.
Thus, the initial contractions we perform must be selected carefully.

Our approach will be to consider a partition $(A,B)$ of $V(G)$ with $\abs{A}=\lfloor n^{1-\delta}\rfloor$ for some small positive constant $\delta$ (any $\delta\in(0,0.005)$ suffices).
Let us first present a prototype of the contraction sequence as follows.
For $m:=\abs{B}$, let $a:=\lfloor\alpha(p)\cdot n\rfloor$.
For simplicity, we assume that both~$m$ and~$a$ are even.
We will later show in Lemma~\ref{lem:ab} that $1/3<\alpha(0.5)\le\alpha(p)\le\alpha(0.4)<1/2$ if $p^*<p\le1/2$.

\begin{enumerate}[label=\bf{Step \arabic*.},leftmargin=*]
    \item For some small positive constant $\varepsilon$ (any $\varepsilon\in(0,0.005)$ suffices), carefully select $\lfloor n^{1/2+\varepsilon}\rfloor$ disjoint pairs of vertices in~$A$ and contract each of the pairs.
    \item Contract  $a$ disjoint pairs of vertices in~$B$ and denote the new vertices by $x_1,\ldots,x_a$.
    \item For each $i\in[m-2a]$, contract $x_i$ and a non-contracted vertex to create a new vertex~$y_i$.
    \item For each $j\in[(3a-m)/2]$, contract $x_{m-2a+(2j-1)}$ and $x_{m-2a+2j}$ to create a new vertex~$z_j$.
    \item Take an arbitrary contraction sequence of the resulting trigraph.
\end{enumerate}

In Step 1, we remark that if we contract two vertices~$u$ and~$v$ in~$A$, the red-degree of the new vertex in $G[A,B]/\{u,v\}$ has a binomial distribution $\mathcal{B}(m,2pq)$.
Thus, if we randomly contract two vertices in~$A$, the red-degree of the new vertex is close to a normal distribution with mean $2pqm$ and variance $2pq(1-2pq)m$ for sufficiently large $m$.
Then via Lemmas~\ref{lem:binomUpper} and~\ref{lem:binomLower}, we can show that with high probability, there are at least $n^{1/2+\varepsilon}$ disjoint pairs of vertices in~$A$ such that for each of the pairs, if we contract it, then the red-degree of the new vertex is bounded from above by the upper bound in~Theorem~\ref{thm:upper2}; see Lemma~\ref{lem:pairs general}.
Those $n^{1/2+\varepsilon}$ pairs will be our choices in Step~1.
In the proof of Lemma~\ref{lem:pairs general}, we only use the information from $G[A,B]$ and preserve the randomness in~$G[B]$ for Steps~2--4.

After Step~1, the number of vertices has decreased sufficiently so that the expected number of new red edges created by contracting a pair of vertices in~$B$ is not too high.
This allows us to contract vertices in~$B$ according to Steps~2--4.

In Step~4, we can show that expected red-degree of a new vertex created in this step is bounded from above by $((1-p^8-q^8)(3a-1)+(1-p^{12}-q^{12})(1-2a))n$, which is much larger than $2pqn$ if $0<p<p^*$, and is smaller than $(2pq-c')n$ for some constant $c'>0$ if $p^*<p\le1/2$.
This is the crucial reason that the threshold $p^*$ exists, as we will further discuss in Section~\ref{subsec:less than p star}.

The reason why we call this fixed contraction sequence a prototype is that following this contraction sequence, some of the contractions in Steps 2--4 may create a vertex whose red-degree is higher than the desired upper bound.
However, we can show that with high probability, this only occurs $O(\sqrt{n})$ times; see Lemma~\ref{lem:exposingBA}.
This means that we can skip these \emph{bad} contractions, while still performing sufficiently many contractions to reduce the total number of vertices down to the intended upper bound on the twin-width.
The remainder of the contraction sequence can then be completed arbitrarily as in Step~5.

In Subsection~\ref{subsec:nonrandom}, we present Lemma~\ref{lem:nonrandom}, a technical yet straightforward lemma that gives an upper bound on the twin-width of an arbitrary graph satisfying a list of properties with respect to a set of parameters.
In Subsection~\ref{subsec:random}, we choose values of the parameters in Lemma~\ref{lem:nonrandom}, which allow us to prove Theorem~\ref{thm:upper2}.
The bulk of the proof is broken up into smaller lemmas, in which we show that with high probability, the random graph $G(n,p)$ with $p\in(p^*,1/2]$ satisfies each of the properties of the technical lemma with respect to these values.
In Subsection~\ref{subsec:upper}, we combine all these results to finish the proof.

\subsection{Finding a contraction sequence}\label{subsec:nonrandom}

For a graph $G$ and a set $\Pi$ of disjoint subsets of $V(G)$, let $G/\Pi$ be a trigraph with $V(G/\Pi)=\Pi \cup \{\{v\} : v \in V(G)\setminus \bigcup_{S\in\Pi} S \}$ such that for distinct $S$, $T\in V(G/\Pi)$,
\begin{enumerate}[label=\rm(\roman*)]
    \item $S$ and $T$ are joined by a black edge in $G/\Pi$ if $vw \in E(G)$ for all $v\in S$ and $w\in T$,
    \item $S$ and $T$ are not adjacent in $G/\Pi$ if $vw \not\in E(G)$ for all $v\in S$ and $w\in T$, and
    \item $S$ and $T$ are joined by a red edge otherwise.
\end{enumerate}
We will frequently use the observation that for distinct $S,T \in \Pi$, we have
\begin{linenomath*}\[
    \rdeg_{G/(\Pi\setminus\{S\})}(T)\le\rdeg_{G/\Pi}(T)+\abs{S}-1.
\]\end{linenomath*}

We now define a sequence of partitions corresponding to the (partial) contraction sequence handled in the overview.
Let $m$ and $a$ be positive integers such that $2a\le m\le3a$, and let $\mathcal{B}^{m,a}_0$ be a partition of~$[m]$ into the singletons, that is, $\mathcal{B}^{m,a}_0:=\{\{j\}:j\in[m]\}$.
For each integer $1\le i \le(m+a)/2$, we define a coarser partition $\mathcal{B}^{m,a}_i$ of~$[m]$ from $\mathcal{B}_{i-1}^{m,a}$ by replacing parts
\begin{itemize}
    \item $\{2i-1\}$ and $\{2i\}$ with $x_i:=\{2i-1,2i\}$ if $i\le a$,
    \item $x_{i-a}$ and $\{2a+(i-a)\}$ with their union $y_{i-a}$ if $a<i\le m-a$, and
    \item $x_{(m-2a)+2(i-(m-a))-1}$ and $x_{(m-2a)+2(i-(m-a))}$ with their union $z_{i-(m-a)}$ if $i>m-a$.
\end{itemize}
See figure~\ref{fig:partitions} for an illustration.

\begin{figure}
    \centering
    \begin{tikzpicture}
        \draw (-0.4,-0.9) rectangle (3.4,4.5);
        
        \draw[rounded corners=1mm, fill=gray!20] (0.5,3.8) rectangle (1.5,4.0);
        
        \node[shape=circle,fill=black, scale=0.25] () at (0.6,3.9) {};
        \node[shape=circle,fill=black, scale=0.25] () at (1.4,3.9) {};
        \node[shape=circle,fill=black, scale=0.25] () at (2.4,3.9) {};
        \node () at (0.6,3.675) {\tiny{$1$}};
        \node () at (1.4,3.675) {\tiny{$2$}};
        \node () at (2.4,3.7) {\tiny{$2a+1$}};

        \node () at (0.35,3.9) {\tiny{$x_1$}};
        
        \node[shape=circle,fill=black, scale=0.25] () at (0.6,3.3) {};
        \node[shape=circle,fill=black, scale=0.25] () at (1.4,3.3) {};
        \node[shape=circle,fill=black, scale=0.25] () at (2.4,3.3) {};
        \node () at (0.6,3.1) {\tiny{$3$}};
        \node () at (1.4,3.1) {\tiny{$4$}};
        \node () at (2.4,3.1) {\tiny{$2a+2$}};
        
        \node () at (1.0,2.7) {\tiny{$\vdots$}};
        \node () at (2.4,2.7) {\tiny{$\vdots$}};
        
        \node[shape=circle,fill=black, scale=0.25] () at (0.6,2.1) {};
        \node[shape=circle,fill=black, scale=0.25] () at (1.4,2.1) {};
        \node[shape=circle,fill=black, scale=0.25] () at (2.4,2.1) {};
        \node () at (0.6,1.9) {\tiny{$2b-1$}};
        \node () at (1.4,1.9) {\tiny{$2b$}};
        \node () at (2.4,1.9) {\tiny{$m$}};
        
        \node[shape=circle,fill=black, scale=0.25] () at (0.6,1.5) {};
        \node[shape=circle,fill=black, scale=0.25] () at (1.4,1.5) {};
        \node () at (0.6,1.3) {\tiny{$2b+1$}};
        \node () at (1.4,1.3) {\tiny{$2b+2$}};
        
        \node[shape=circle,fill=black, scale=0.25] () at (0.6,0.9) {};
        \node[shape=circle,fill=black, scale=0.25] () at (1.4,0.9) {};
        \node () at (0.6,0.7) {\tiny{$2b+3$}};
        \node () at (1.4,0.7) {\tiny{$2b+4$}};
        
        \node () at (1.0,0.3) {\tiny{$\vdots$}};
        
        \node[shape=circle,fill=black, scale=0.25] () at (0.6,-0.3) {};
        \node[shape=circle,fill=black, scale=0.25] () at (1.4,-0.3) {};
        \node () at (0.6,-0.5) {\tiny{$2a-1$}};
        \node () at (1.4,-0.5) {\tiny{$2a$}};
        
        \node () at (1.5,-1.25) {$\mathcal{B}_1$};
    \end{tikzpicture}
    \hspace{0.5cm}
    \begin{tikzpicture}
        \draw (-0.4,-0.9) rectangle (3.4,4.5);
        
        \draw[rounded corners=1mm, fill=gray!20] (0.5,3.8) rectangle (2.5,4.0);
        \node[shape=circle,fill=black, scale=0.25] () at (0.6,3.9) {};
        \node[shape=circle,fill=black, scale=0.25] () at (1.4,3.9) {};
        \node[shape=circle,fill=black, scale=0.25] () at (2.4,3.9) {};
        \node () at (0.6,3.675) {\tiny{$1$}};
        \node () at (1.4,3.675) {\tiny{$2$}};
        \node () at (2.4,3.675) {\tiny{$2a+1$}};
        
        \draw[rounded corners=1mm, fill=gray!20] (0.5,3.2) rectangle (1.5,3.4);
        \node[shape=circle,fill=black, scale=0.25] () at (0.6,3.3) {};
        \node[shape=circle,fill=black, scale=0.25] () at (1.4,3.3) {};
        \node[shape=circle,fill=black, scale=0.25] () at (2.4,3.3) {};
        \node () at (0.6,3.075) {\tiny{$3$}};
        \node () at (1.4,3.075) {\tiny{$4$}};
        \node () at (2.4,3.1) {\tiny{$2a+2$}};
        
        \node () at (1.0,2.7) {\tiny{$\vdots$}};
        \node () at (2.4,2.7) {\tiny{$\vdots$}};
        
        \draw[rounded corners=1mm, fill=gray!20] (0.5,2.0) rectangle (1.5,2.2);
        \node[shape=circle,fill=black, scale=0.25] () at (0.6,2.1) {};
        \node[shape=circle,fill=black, scale=0.25] () at (1.4,2.1) {};
        \node[shape=circle,fill=black, scale=0.25] () at (2.4,2.1) {};
        \node () at (0.6,1.875) {\tiny{$2b-1$}};
        \node () at (1.4,1.875) {\tiny{$2b$}};
        \node () at (2.4,1.9) {\tiny{$m$}};
        
        \draw[rounded corners=1mm, fill=gray!20] (0.5,1.4) rectangle (1.5,1.6);
        \node[shape=circle,fill=black, scale=0.25] () at (0.6,1.5) {};
        \node[shape=circle,fill=black, scale=0.25] () at (1.4,1.5) {};
        \node () at (0.6,1.275) {\tiny{$2b+1$}};
        \node () at (1.4,1.275) {\tiny{$2b+2$}};
        
        \draw[rounded corners=1mm, fill=gray!20] (0.5,0.8) rectangle (1.5,1.0);
        \node[shape=circle,fill=black, scale=0.25] () at (0.6,0.9) {};
        \node[shape=circle,fill=black, scale=0.25] () at (1.4,0.9) {};
        \node () at (0.6,0.675) {\tiny{$2b+3$}};
        \node () at (1.4,0.675) {\tiny{$2b+4$}};
        
        \node () at (1.0,0.3) {\tiny{$\vdots$}};
        
        \draw[rounded corners=1mm, fill=gray!20] (0.5,-0.4) rectangle (1.5,-0.2);
        \node[shape=circle,fill=black, scale=0.25] () at (0.6,-0.3) {};
        \node[shape=circle,fill=black, scale=0.25] () at (1.4,-0.3) {};
        \node () at (0.6,-0.525) {\tiny{$2a-1$}};
        \node () at (1.4,-0.525) {\tiny{$2a$}};

        \node () at (0.35,3.9) {\tiny{$y_{1}$}};
        \node () at (0.35,3.3) {\tiny{$x_2$}};
        \node () at (0.35,2.1) {\tiny{$x_b$}};
        \node () at (0.20,1.5) {\tiny{$x_{b+1}$}};
        \node () at (0.20,0.9) {\tiny{$x_{b+2}$}};
        \node () at (0.35,-0.3) {\tiny{$x_a$}};
        
        \node () at (1.5,-1.25) {$\mathcal{B}_{a+1}$};
    \end{tikzpicture}
    \hspace{0.5cm}
    \begin{tikzpicture}
        \draw (-0.4,-0.9) rectangle (3.4,4.5);
        
        \draw[rounded corners=1mm, fill=gray!20] (0.5,3.8) rectangle (2.5,4.0);
        \node[shape=circle,fill=black, scale=0.25] () at (0.6,3.9) {};
        \node[shape=circle,fill=black, scale=0.25] () at (1.4,3.9) {};
        \node[shape=circle,fill=black, scale=0.25] () at (2.4,3.9) {};
        \node () at (0.6,3.675) {\tiny{$1$}};
        \node () at (1.4,3.675) {\tiny{$2$}};
        \node () at (2.4,3.675) {\tiny{$2a+1$}};
        
        \draw[rounded corners=1mm, fill=gray!20] (0.5,3.2) rectangle (2.5,3.4);
        \node[shape=circle,fill=black, scale=0.25] () at (0.6,3.3) {};
        \node[shape=circle,fill=black, scale=0.25] () at (1.4,3.3) {};
        \node[shape=circle,fill=black, scale=0.25] () at (2.4,3.3) {};
        \node () at (0.6,3.075) {\tiny{$3$}};
        \node () at (1.4,3.075) {\tiny{$4$}};
        \node () at (2.4,3.075) {\tiny{$2a+2$}};
        
        \node () at (1.0,2.7) {\tiny{$\vdots$}};
        \node () at (2.4,2.7) {\tiny{$\vdots$}};
        
        \draw[rounded corners=1mm, fill=gray!20] (0.5,2.0) rectangle (2.5,2.2);
        \node[shape=circle,fill=black, scale=0.25] () at (0.6,2.1) {};
        \node[shape=circle,fill=black, scale=0.25] () at (1.4,2.1) {};
        \node[shape=circle,fill=black, scale=0.25] () at (2.4,2.1) {};
        \node () at (0.6,1.875) {\tiny{$2b-1$}};
        \node () at (1.4,1.875) {\tiny{$2b$}};
        \node () at (2.4,1.875) {\tiny{$m$}};
        
        \draw[rounded corners=1mm, fill=gray!20] (0.5,0.8) rectangle (1.5,1.6);
        \node[shape=circle,fill=black, scale=0.25] () at (0.6,1.5) {};
        \node[shape=circle,fill=black, scale=0.25] () at (1.4,1.5) {};
        
        \node[shape=circle,fill=black, scale=0.25] () at (0.6,0.9) {};
        \node[shape=circle,fill=black, scale=0.25] () at (1.4,0.9) {};
        
        \node () at (1.9,1.475) {\tiny{$2b+1$}};
        \node () at (1.9,1.275) {\tiny{$2b+2$}};
        \node () at (1.9,1.075) {\tiny{$2b+3$}};
        \node () at (1.9,0.875) {\tiny{$2b+4$}};
        
        \node () at (1.0,0.3) {\tiny{$\vdots$}};
        
        \draw[rounded corners=1mm, fill=gray!20] (0.5,-0.4) rectangle (1.5,-0.2);
        \node[shape=circle,fill=black, scale=0.25] () at (0.6,-0.3) {};
        \node[shape=circle,fill=black, scale=0.25] () at (1.4,-0.3) {};
        \node () at (0.6,-0.525) {\tiny{$2a-1$}};
        \node () at (1.4,-0.525) {\tiny{$2a$}};

        \node () at (0.35,3.9) {\tiny{$y_{1}$}};
        \node () at (0.35,3.3) {\tiny{$y_{2}$}};
        \node () at (0.10,2.1) {\tiny{$y_{m-2a}$}};
        \node () at (0.35,1.2) {\tiny{$z_{1}$}};
        \node () at (0.35,-0.3) {\tiny{$x_a$}};
        
        \node () at (1.5,-1.25) {$\mathcal{B}_{a+b+1}$};
    \end{tikzpicture}
    \caption{Description of partitions $\mathcal{B}^{n,a}_i$ of $[n]$, where $i \in \{1, a+1, a+b+1\}$ and $b := n-2a$.
    Each gray region represents a part of size at least two in $\mathcal{B}_i$.
    A part $y_1$ of size $3$ appears first in $\mathcal{B}_{a+1}$, and a part $z_1$ of size $4$ appears first in $\mathcal{B}_{a+b+1}$.
    }
    \label{fig:partitions}
\end{figure}
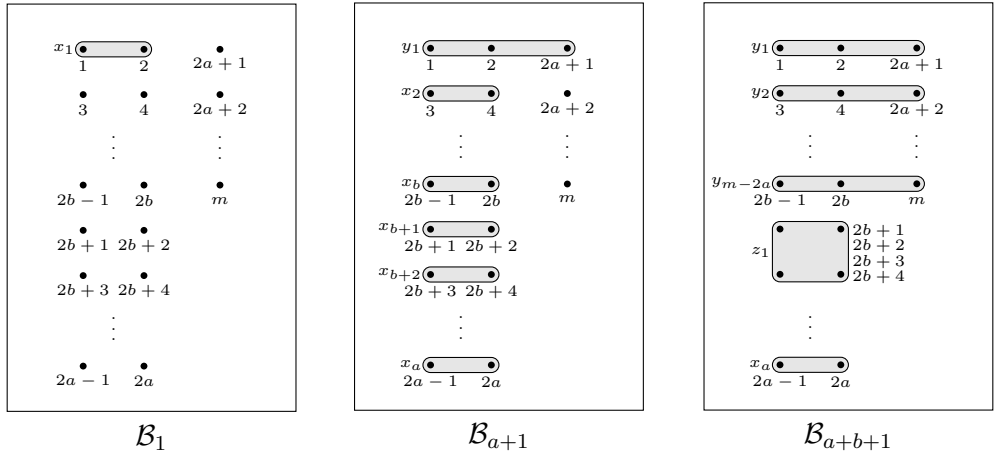

\begin{lemma}\label{lem:nonrandom}
    Let $\alpha\in(\frac{1}{3},\frac{1}{2})$, let $n$, $m$, $s$, and $\ell$ be positive integers with $2\alpha n\le m\le n-2s$, and let $G$ be a graph with $V(G)=[n]$.
    Let $a:=\lfloor\alpha n\rfloor$, $B:=[m]$, and $A:=[n]\setminus[m]$.
    
    Let $\{u_1,v_1\},\ldots,\{u_s,v_s\}$ be disjoint pairs of vertices in $A$ and let $\mathcal{A}_0:=\{\{j\}:j\in A\}$.
    For $1\le i\le s$, we define a coarser partition $\mathcal{A}_i:=(\mathcal{A}_{i-1}\setminus\{\{u_i\},\{v_i\}\})\cup\{u_i,v_i\}$ of $A$.
    For $0\le i \le(m+a)/2$, let $\mathcal{B}_i := \mathcal{B}^{m,a}_{i}$, and let~$L$ be a subset of~$B$ with $\abs{L}\le\ell$.
    
    Let $\lambda_1$, $\lambda_2$, $\lambda_2'$, $\lambda_3$, $\mu_1$, $\mu_2$, $\rho_2$, $\rho_3$, $\nu_2$, $\nu_3$, and~$\nu_4$ be nonnegative reals.
    Suppose that~$G$ satisfies the following properties.
    \begin{enumerate}[label=\rm(\Alph*)]
        \item\label{item:expABpairs} $\rdeg_{G[A,B]/\mathcal{A}_s}(\{u_i,v_i\})\le\lambda_1$ for every $i\in[s]$.
        \item\label{item:expAB} For every $S \subseteq A$ with $\abs{S} \in \{1,2\}$ and every positive integer $i \le(m+a)/2$,
            \begin{linenomath*}\[
                \rdeg_{G[A,B]/(\{S\}\cup\mathcal{B}_i)}(S)\le\mu_{\abs{S}}.
            \]\end{linenomath*}
        \item\label{item:expA} For each $i,j \in[s]$ with $j\le i$,
            \begin{linenomath*}\[
                \rdeg_{G[A]/\mathcal{A}_{i}}(\{u_j,v_j\})
                \le\lambda_2
                \quad \text{and} \quad
                \rdeg_{G[A]/\mathcal{A}_{s}}(\{u_j,v_j\})
                \le\lambda_2'.
            \]\end{linenomath*}
        \item\label{item:expBA} For each positive integer $i\le(m+a)/2$ and $S \in \mathcal{B}_{i}$ with $\abs{S} \in \{2,3\}$ and $S \cap L=\emptyset$,
            \begin{linenomath*}\[ 
                \rdeg_{G[A,B]/(\mathcal{A}_s \cup \mathcal{B}_i)}(S)\le\rho_{\abs{S}}.
            \]\end{linenomath*}
        \item\label{item:expBsingles} For each positive integer $i \le(m+a)/2$ and $u\in B$,
            \begin{linenomath*}\[
                \rdeg_{G[B] / \{S \in \mathcal{B}_i : u \notin S \}} (\{u\})\le\lambda_3
            \]\end{linenomath*}
        \item\label{item:expB} For each positive integer $i \le(m+a)/2$ and $S \in \mathcal{B}_i$ 
            \begin{linenomath*}\[
                \rdeg_{G[B]/\mathcal{B}_i}(S)\le\nu_{\abs{S}}.
            \]\end{linenomath*}
    \end{enumerate}
    Then the twin-width of $G$ is bounded from above by the maximum of the following eight numbers.
    \begin{linenomath*}\[
        \begin{array}{llll}
            n-s-\lfloor (m+a)/2 \rfloor+3\ell,
            &s+\mu_1,
            &\lambda_1+\lambda_2,
            &\lambda_2'+\mu_2+3\ell, \\
            s+\lambda_3,
            &\rho_2+\nu_2+3\ell,
            &\rho_3+\nu_3+3\ell,
            &n-m-s+\nu_4+3\ell.
        \end{array}
    \]\end{linenomath*}
\end{lemma}
\begin{proof}
    Let $r := \lfloor (m+a) / 2 \rfloor$.
    We define a sequence of partitions $\mathcal{B}_0^{\text{good}},\ldots,\mathcal{B}_r^{\text{good}}$ of~$B$ similar to $\mathcal{B}_0,\ldots,\mathcal{B}_r (= \mathcal{B}_i^{m,a},\ldots, \mathcal{B}_r^{m,a})$ where $\mathcal{B}_0^{\text{good}}=\mathcal{B}_0$, but the difference is that we skip to merge two parts $S_1$ and $S_2$ in $\mathcal{B}_{i-1}^{\text{good}}$ if there is $T\in \mathcal{B}_r$ such that $S_1 \cup S_2 \subseteq T$ and $T\cap L \ne \emptyset$.
    More precisely, for each $i\in[r]$,
    \begin{linenomath*}\begin{align*}
        \mathcal{B}_i^{\text{good}}
        :=&\{S \in \mathcal{B}_i:\nexists T\in\mathcal{B}_r\text{ such that }S\subseteq T\text{ and }T\cap L\ne\emptyset\}\\
        &\hspace{0.1cm}\cup\{\{v\}:\exists T\in\mathcal{B}_r\text{ such that }v\in T\text{ and }T\cap L\ne\emptyset\}.
    \end{align*}\end{linenomath*}
    
    For $1\le i\le s+r$, we define partitions $\Pi_i$ and $\Pi_i'$ of $V(G)$ as follows:
    \begin{linenomath*}\begin{align*}
        \Pi_i:=
        \begin{cases}
            \mathcal{A}_i\cup\mathcal{B}_0 & \text{if $i\le s$,}\\
            \mathcal{A}_s\cup\mathcal{B}_{i-s}^{\text{good}} & \text{otherwise}
        \end{cases}
        \hspace{0.5cm}
        \text{and}
        \hspace{0.5cm}
        \Pi_i':=
        \begin{cases}
            \mathcal{A}_i\cup\mathcal{B}_0 & \text{if $i\le s$,}\\
            \mathcal{A}_s\cup\mathcal{B}_{i-s} & \text{otherwise}.
        \end{cases}
    \end{align*}\end{linenomath*}
    Observe that for each $i\in[s+r]$, $\Pi_i'$ is a refinement of $\Pi_{i+1}'$ with $\abs{\Pi_{i}'}=\abs{\Pi_{i+1}'}+1$.
    In addition, $\Pi_i$ is a refinement of~$\Pi_{i+1}$ and $\abs{\Pi_{i}}\le\abs{\Pi_{i+1}}+1$.
    
    For each $i\in[s+r]$, let $G_1:=G$ and
    \begin{linenomath*}\[
        G_{n-\abs{\Pi_i}+1}:=G/\Pi_i.
    \]\end{linenomath*}
    Then $G_1,G_2,\ldots,G_{n-\abs{\Pi_{s+r}}+1}$ is a partial contraction sequence of~$G$.
    We use this partial contraction sequence to obtain the required upper bound for the twin-width of~$G$.
    
    Observe that $\Pi_i$ is a refinement of $\Pi_{i}'$ determined by $L$ and each element in $\Pi_i'$ has size at most~$4$, which implies that $\abs{\Pi_{i}}\le\abs{\Pi_i'}+3\ell$.
    Then
    \begin{linenomath*}\begin{align*}
        |V(G_{n-|\Pi_{s+t}|+1})|=|\Pi_{s+r}|
        \le|\Pi'_{s+r}|+3\ell 
        &=n-(s+r)+3\ell \\
        &=n-s-\lfloor(m+a) / 2 \rfloor+3\ell.
    \end{align*}\end{linenomath*}
    Therefore, it suffices to show that for each $i\in[s+r]$ and $S \in \Pi_i$, the red-degree of~$S$ in~$G/\Pi_i$ is bounded from above by the maximum of
    \begin{linenomath*}\[
        \begin{array}{llll}
            s+\mu_1,
            &\lambda_1+\lambda_2,
            &\lambda_2'+\mu_2+3\ell, \\
            s+\lambda_3,
            &\rho_2+\nu_2+3\ell,
            &\rho_3+\nu_3+3\ell,
            &n-m-s+\nu_4+3\ell.
        \end{array}
    \]\end{linenomath*}

    \medskip
    \noindent\textbf{Case I.} $\abs{S}=1$.
    
    If $i \le s$, then $\rdeg_{G/\Pi_i}(S)\le s \le s+\mu_1$.
    If $i>s$ and $S\subseteq A$, then by~\ref{item:expAB},
    \begin{linenomath*}\[
        \rdeg_{G/\Pi_i}(S)
        \le
        \rdeg_{G/\Pi_i'}(S)
        =
        \rdeg_{G[A] / \mathcal{A}_s}(S)+\rdeg_{G[A,B] / \mathcal{B}_{i-s}}(S)
        \le s+\mu_1.
    \]\end{linenomath*}
    If $i>s$ and $S\subseteq B$, then by~\ref{item:expBsingles},
    \begin{linenomath*}\begin{align*}
        \rdeg_{G/\Pi_i}(S)
        &=
        \rdeg_{G/\mathcal{A}_s}(S)+\rdeg_{G[B]/\mathcal{B}_{i-s}^{\text{good}}}(S)\\
        &\le
        s+\rdeg_{G[B]/\{T \in \mathcal{B}_{i-s} : S \not\subseteq T\}} (S)
        \le
        s+\lambda_3.
    \end{align*}\end{linenomath*}

    \medskip
    \noindent\textbf{Case II.} $\abs{S}=2$.
    
    If $i \le s$, then $S\subseteq A$ and by~\ref{item:expA} and~\ref{item:expABpairs},
    \begin{linenomath*}\begin{align*}
        \rdeg_{G/\Pi_i}(S)
        &=
        \rdeg_{G[A]/\mathcal{A}_i}(S)+\rdeg_{G[A,B]/\mathcal{A}_s}(S) 
        \le
        \lambda_2+\lambda_1.
    \end{align*}\end{linenomath*}
    If $i>s$ and $S\subseteq A$, then by~\ref{item:expA} and~\ref{item:expAB},
    \begin{linenomath*}\begin{align*}
        \rdeg_{G/\Pi_i}(S)
        &\le\rdeg_{G/\Pi_i'}(S)+3\ell\\
        &=\rdeg_{G[A]/\mathcal{A}_s}(S)+\rdeg_{G[A,B]/(\mathcal{A}_s \cup \mathcal{B}_{i-s})}(S)+3\ell 
        \le\lambda_2'+\mu_2+3\ell.
    \end{align*}\end{linenomath*}
    If $i>s$ and $S\subseteq B$, then 
    by~\ref{item:expBA} and~\ref{item:expB}, 
    \begin{linenomath*}\begin{align*}
        \rdeg_{G/\Pi_i}(S)
        &\le\rdeg_{G/\Pi_i'}(S)+3\ell \\
        &=\rdeg_{G[A,B]/(\mathcal{A}_s \cup \mathcal{B}_{i-s})}(S)+\rdeg_{G[B]/\mathcal{B}_{i-s}}(S)+3\ell
        \le\rho_2+\nu_2+3\ell.
    \end{align*}\end{linenomath*}

    \medskip
    \noindent\textbf{Case III.} $\abs{S}=3$.
    
    Then $S \subseteq B$ and $i>s$.
    By~\ref{item:expBA} and~\ref{item:expB}, 
    \begin{linenomath*}\begin{align*}
        \rdeg_{G/\Pi_i}(S)
        &\le
        \rdeg_{G/\Pi_i'}(S)+3\ell \\
        &=
        \rdeg_{G[A,B]/(\mathcal{A}_s \cup \mathcal{B}_{i-s})}(S)+\rdeg_{G[B]/\mathcal{B}_{i-s}}(S)+3\ell
        \le
        \rho_3+\nu_3+3\ell.
    \end{align*}\end{linenomath*}

    \medskip
    \noindent\textbf{Case IV.} $\abs{S}=4$.
    
    Then $S \subseteq B$ and $i>s$.
    By~\ref{item:expB},
    \begin{linenomath*}\begin{align*}
        \rdeg_{G/\Pi_i}(S)
        &\le
        \rdeg_{G/\Pi_i'}(S)+3\ell \\
        &\le 
        (|A|-s)
        +
        \rdeg_{G[B]/\mathcal{B}_{i-s}}(S)
        +
        3\ell
        =
        (n-m-s)+\nu_4+3\ell.
    \end{align*}\end{linenomath*}
    
    This completes the proof.
\end{proof}

\subsection{Preliminaries for Lemma~\ref{lem:nonrandom}}\label{subsec:random}

As we mentioned in the overview of Section~\ref{sec:upper}, we will prove Theorem~\ref{thm:upper2} by using Lemma~\ref{lem:nonrandom}.
To do this, in this subsection, we examine several upper bounds for the numbers $\lambda_1$, $\lambda_2$, $\lambda_2'$, $\lambda_3$, $\mu_1$, $\mu_2$, $\rho_2$, $\rho_3$, $\nu_2$, $\nu_3$, and $\nu_4$ in the statement of Lemma~\ref{lem:nonrandom}.
These upper bounds correspond to conditions \ref{item:expABpairs}--\ref{item:expB} in the lemma and will be used to verify Theorem~\ref{thm:upper2} in Subsection~\ref{subsec:upper}.

We start with some properties of $\alpha(x)$ and $\beta(x)$.
The following lemma is proved by straightforward calculations.
We include its proof in Appendix~\ref{app:inequalities}.

\begin{restatable}{lemma}{lemab}\label{lem:ab}
    For all $x\in(0,1/2]$, the following hold.
    \begin{enumerate}[label=\rm(\roman*)]
        \item\label{lem:ab ab} For all $0<y<x$, we have that $\alpha(y)>\alpha(x)$ and $\beta(y)<\beta(x)$.
        \item\label{lem:ab p*} There is a unique $p^*\in(0,1/2]$ with $\alpha(p^*)=\beta(p^*)$.
        Moreover, $p^* \in(0.4012,0.4013)$.
        \item\label{lem:ab long ineq}
            The following three values have the same sign (positive, zero, negative): 
        \begin{linenomath*}\begin{align*}
            &p^*-x, \quad \alpha(x)-\beta(x),\quad \text{ and }\\
            &(1-x^8-(1-x)^8)(3\alpha(x)-1)+(1-x^{12}-(1-x)^{12})(1-2\alpha(x))-2x(1-x).
        \end{align*}\end{linenomath*}
        \item\label{lem:ab 1-alpha} $(1-\alpha(x))/2<2x(1-x)$.
    \end{enumerate}
\end{restatable}

For a constant $p\in[0,1]$ and two disjoint sets $A$ and $B$, let $G(A,B,p)$ be a random bipartite graph on $A\cup B$ such that two vertices $a\in A$ and $b\in B$ are joined by an edge with probability $p$ independently.

We will use the following lemma to choose $\lambda_1$ in~Lemma~\ref{lem:nonrandom}\ref{item:expABpairs}.

\begin{lemma}\label{lem:pairs general}
    Let $p \in[0.4,0.5]$, $q:= 1-p$, and $\varepsilon$ and $\delta$ be positive constants less than $0.01$.
    Let $n\ge1$ be an integer and $m:=n-\lfloor n^{1-\delta}\rfloor$.
    For $G:=G(A,B,p)$ with $\abs{A}=\lfloor n^{1-\delta} \rfloor$ and $\abs{B}=m$, with high probability, $G$ has at least $n^{(1+\varepsilon)/2}$ pairwise disjoint pairs of vertices $u,v\in A$ with $r_G(u,v)\le2pqm-c\sqrt{m\ln m}$, where $c=\sqrt{2pq(1-2pq)(3-3\varepsilon-4\delta)}$.
\end{lemma}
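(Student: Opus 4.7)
The plan is to consider the auxiliary graph $H$ on $V(H) = A$ whose edges are the pairs $\{u,v\}$ satisfying $r_G(u,v) \le 2pqm - c\sqrt{m\ln m}$, and to show that $H$ has a matching of size at least $n^{(1+\varepsilon)/2}$ with high probability. For distinct $u,v \in A$, each vertex of $B$ lies in $N_G(u) \triangle N_G(v)$ independently with probability $2pq$, so $r_G(u,v) \sim \mathcal{B}(m, 2pq)$. Applying Lemma~\ref{lem:binomLower} with $p' = 2pq$ and $\varepsilon' = c\sqrt{\ln m/m}$ yields $\Pr[\{u,v\} \in E(H)] \ge m^{-(3-3\varepsilon-4\delta)/2 - o(1)}$, and hence $\E[|E(H)|] \ge n^{(1+3\varepsilon)/2 - o(1)}$; the value of $c$ is chosen so that $\E[|E(H)|]$ exceeds the target $n^{(1+\varepsilon)/2}$ by a factor of $n^{\varepsilon - o(1)}$, providing slack for the matching extraction below.

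To concentrate $|E(H)|$, I would apply Chebyshev's inequality. Two events $\{u,v\}, \{u',v'\} \in E(H)$ are independent whenever the pairs are vertex-disjoint, so only pairs sharing a common vertex $u$ contribute to the covariance. Conditioning on $K_u := |N_G(u)|$ makes the events $\{u,v\} \in E(H)$ for distinct $v$ conditionally independent, and writing $f(k) := \Pr[r_G(u,v) \le 2pqm - c\sqrt{m\ln m} \mid K_u = k]$, the covariance of any two such events is $\E[f(K_u)^2] - \E[f(K_u)]^2$. The conditional distribution of $r_G(u,v)$ given $N_G(u)$ with $|N_G(u)| = k$ is a sum of $m$ independent Bernoullis with mean $mp + k(1-2p)$ and variance $mpq$, so $f(k)$ admits a Gaussian-type tail bound; combining this with the Gaussian density of $K_u \sim \mathcal{B}(m,p)$ around $mp$ and evaluating the resulting integral at its saddle point should yield
\[
    \E[f(K_u)^K] \le m^{-K(1-2pq)(3-3\varepsilon-4\delta)/(1+K(1-2p)^2) + o(1)}
\]
for each integer $K \ge 1$. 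The case $K = 2$ then gives $\Var(|E(H)|)/\E[|E(H)|]^2 = O(n^{-4pq/(3-8pq) + o(1)}) = o(1)$ for $p \in [0.4, 0.5]$, so Chebyshev implies $|E(H)| \ge \E[|E(H)|]/2$ with high probability.

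To extract a large matching, I would bound $\Delta(H)$ using the same moment estimate with $K = 3$. Union-bounding over $K$-subsets of $A \setminus \{u\}$ and using conditional independence given $K_u$ gives $\Pr[\deg_H(u) \ge K] \le |A|^K \E[f(K_u)^K]/K!$; a further union bound over $u \in A$ reduces the claim $\Delta(H) \le 2$ to verifying $(K+1)(1 + K(1-2p)^2) < 3K(1-2pq)$ at $K = 3$, which simplifies to $pq > 7/30$ and holds throughout $[0.4, 0.5]$ since $pq \ge 0.24$. A graph with maximum degree at most $2$ is a disjoint union of paths and cycles, so $H$ contains a matching of size at least $|E(H)|/3 \ge n^{(1+3\varepsilon)/2 - o(1)}/3$, which exceeds $n^{(1+\varepsilon)/2}$ for sufficiently large $n$. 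The main technical obstacle will be the saddle-point estimate for $\E[f(K_u)^K]$, which requires carefully coupling the Gaussian tails of the binomial $K_u$ and of the conditional Hamming distance.
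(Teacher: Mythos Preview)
Your architecture is the same as the paper's: lower-bound $\nu:=\Pr[\{u,v\}\in E(H)]$ via Lemma~\ref{lem:binomLower}, concentrate $|E(H)|$ by Chebyshev after bounding the covariances $\E[X_{u,v}X_{u,v'}]$ by conditioning on $N_G(u)$, bound $\Delta(H)$, and greedily extract a matching. Where you diverge is in the last two steps. The paper does not attempt a unified saddle-point formula for $\E[f(K_u)^K]$; instead it splits the range of $\deg_G(u)$ at two explicit thresholds $p-\sqrt{4pq\ln m/m}$ and $p-\sqrt{2pq\ln m/m}$, uses Lemma~\ref{lem:binomUpper} (with denominator $4pq(1-2pq)$, not the true variance $2pq$) on each piece, and only shows $\E[X_{u,v}X_{u,v'}]\le n^{-2+o(1)}$. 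For the degree bound the paper proves merely $\Delta(H)\le n^{\xi}$ for any fixed $\xi>0$, by showing that conditional on $\deg_G(u)$ being typical the events $X_{u,v}=1$ have probability $n^{-1+o(1)}$, and then extracts the matching greedily (each chosen edge destroys at most $2n^{\xi}$ others). Your sharper target $\Delta(H)\le 2$, resting on $pq>7/30$, is correct on $[0.4,0.5]$ but hinges on using a Bernstein-type tail for $f(k)$ with the \emph{true} conditional variance $mpq$; the weaker bound of Lemma~\ref{lem:binomUpper} with denominator $4pq(1-2pq)$ would not give enough at $p=0.4$ once the $\varepsilon,\delta$ losses are included, so you should be explicit that you need Bernstein (or an equivalent variance-sensitive inequality) when you make the saddle-point rigorous. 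With that caveat, your route is valid and arguably cleaner; the paper's route is more elementary and has more numerical slack.
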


We note that to derive Theorem~\ref{thm:upper2}, we will use Lemma~\ref{lem:pairs general} applied to $p\in(p^*,1/2]$.
It is however convenient to prove Lemma~\ref{lem:pairs general} for a larger interval $[0.4,0.5]$.

\begin{proof}
    For each $\{u,v\}\in\binom{A}{2}$, let $X_{u,v}$ be the indicator random variable such that $X_{u,v}=1$ if $r_G(u,v)\le2pqm-c\sqrt{m\ln m}$, and $X_{u,v}=0$ otherwise.
    For each $u\in A$, let $X_u:=\sum_{v\in A\setminus\{u\}}X_{u,v}$.
    Let $X:=\sum_{\{u,v\}\in\binom{A}{2}}X_{u,v}$.
    
    \begin{claim}\label{cl:1}
        $\Pr[X \le n^{1/2+\varepsilon}] \to 0$ as $n \to \infty$.
    \end{claim}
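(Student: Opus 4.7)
The approach is Chebyshev's inequality applied to $X = \sum_{\{u,v\} \in \binom{A}{2}} X_{u,v}$. First I compute the mean: for each pair, $r_G(u,v)$ has the same law as $\mathcal{B}(m, 2pq)$ (each $b \in B$ contributes to the symmetric difference with probability $2pq$, independently), so Lemma~\ref{lem:binomLower} applied with deviation parameter $c\sqrt{\ln m / m}$ (which satisfies its hypotheses for all sufficiently large $n$) gives
\[
\E[X_{u,v}]
\ge \tfrac{1}{2\sqrt{2}}\exp\!\Bigl(-\tfrac{c^{2}\ln m}{4pq(1-2pq)}- O(\sqrt{\ln m})\Bigr)
= m^{-(3-3\varepsilon-4\delta)/2-o(1)},
\]
using $c^{2} = 2pq(1-2pq)(3-3\varepsilon-4\delta)$. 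Multiplying by $\binom{|A|}{2} \ge n^{2(1-\delta)-o(1)}/2$ and using $m = n - o(n)$ yields $\E[X] \ge n^{1/2+3\varepsilon/2 - o(1)}$, comfortably above the target $n^{1/2+\varepsilon}$.

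Next I bound the variance. Whenever $\{u,v\} \cap \{u',v'\} = \emptyset$, the indicators $X_{u,v}$ and $X_{u',v'}$ depend on disjoint sets of edges in $G$, so their covariance vanishes; the diagonal contributions sum to at most $\E[X]$, which is negligible compared to $\E[X]^2$. It remains to control the $O(|A|^{3})$ ordered pairs of indices $\{u,v\}, \{u,v'\}$ sharing a vertex $u$. For such a term, conditioning on $E_G(\{u\}, B)$ (equivalently, on $K := |N_G(u) \cap B|$) makes $X_{u,v}$ and $X_{u,v'}$ conditionally independent, so
\[
\operatorname{Cov}(X_{u,v}, X_{u,v'}) = \Var_K(g(K)), \qquad \text{where } g(K) := \Pr[X_{u,v} = 1 \mid K].
\]
Given $K$, the variable $r_G(u,v)$ is the independent sum $\mathcal{B}(K,q) + \mathcal{B}(m-K,p)$, with mean $2pqm + (K-pm)(q-p)$ and variance $mpq$, so Chernoff/Gaussian estimates yield an explicit expression for $g(K)$. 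Combining this with the $\mathcal{B}(m,p)$-concentration of $K$ around $pm$ (standard deviation $\sqrt{mpq}$), I will show $\Var_K(g(K)) = O(p_0^{2})$ where $p_0 := \E[X_{u,v}]$.

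Given this bound, the total shared-vertex contribution to the variance is $O(|A|^{3} p_0^{2}) = O(n^{3\varepsilon + \delta - o(1)})$, which is $o(\E[X]^{2}) = o(n^{1+3\varepsilon - o(1)})$ since $\delta < 0.01$. Chebyshev then gives $\Pr[X \le \E[X]/2] = o(1)$, which in turn implies $\Pr[X \le n^{1/2+\varepsilon}] = o(1)$ as $\E[X] \ge 2 n^{1/2+\varepsilon}$ for large $n$.

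The technical heart is the bound $\Var_K(g(K)) = O(p_0^{2})$. In the degenerate case $p = 1/2$ we have $q - p = 0$, so the conditional mean of $r_G(u,v)$ is independent of $K$, $g$ is a constant function, and the covariance vanishes outright. For $p^{*} < p < 1/2$ one must verify via a Laplace/saddle-point comparison that the joint large-deviation exponent for $(r_G(u,v), r_G(u,v'))$ both falling below $T$ equals twice the marginal exponent, so that the two events are essentially independent up to constant factors despite the shared vertex.
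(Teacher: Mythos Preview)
Your overall framework---second-moment method, decomposing the variance into diagonal, disjoint-pair, and shared-vertex contributions, and conditioning on $K=\deg_G(u)$ to exploit the conditional independence of $X_{u,v}$ and $X_{u,v'}$---is exactly the paper's approach. The gap is in the final step: your assertion that $\Var_K(g(K))=O(p_0^2)$, equivalently that the joint large-deviation exponent for $\{X_{u,v}=X_{u,v'}=1\}$ equals twice the marginal exponent, is \emph{false} for every $p\in(p^*,1/2)$.

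Here is why. Writing $K=(p-\gamma\sqrt{\ln m/m})m$, the large-deviation rate for $K$ is $\gamma^2/(2pq)$, while (as you note) the conditional mean of $r_G(u,v)$ shifts by $(q-p)\gamma\sqrt{m\ln m}$, so the conditional rate for $X_{u,v}=1$ is $(c-(q-p)\gamma)^2/(4pq(1-2pq))$. The joint rate for both indicators is therefore
\[
\frac{\gamma^2}{2pq}+\frac{(c-(q-p)\gamma)^2}{2pq(1-2pq)},
\]
and a direct minimization over $\gamma$ gives optimum $\gamma^*=\frac{(q-p)c}{2(1-3pq)}>0$ whenever $p<1/2$, with minimum value $\frac{c^2}{4pq(1-3pq)}$. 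Since $\frac{1}{4pq(1-3pq)}<\frac{1}{2pq(1-2pq)}$ exactly when $pq<1/4$, the joint exponent is \emph{strictly smaller} than twice the marginal exponent for $p<1/2$: the events are genuinely positively correlated through the degree of $u$, and $\E[X_{u,v}X_{u,v'}]\gg p_0^2$. (At $p=1/2$ your observation that $g$ is constant is correct and the two exponents coincide.)

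What the paper does instead is prove the weaker but sufficient bound $\E[X_{u,v}X_{u,v'}]\le 3n^{-2+o(1)}$. This is obtained by splitting according to whether $K$ lies below $(p-\sqrt{4pq\ln m/m})m$, between that and $(p-\sqrt{2pq\ln m/m})m$, or above; in each range one combines the tail bound for $K$ with the conditional bound $g(K)^2\le n^{-(c-(q-p)\gamma)^2/(2pq(1-2pq))+o(1)}$, and the resulting products are each $\le n^{-2+o(1)}$ thanks to two numerical inequalities $(c-(q-p)\sqrt{4pq})^2>2pq(1-2pq)$ and $(c-(q-p)\sqrt{2pq})^2>4pq(1-2pq)$ that hold throughout $p\in[0.4,0.5]$. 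The bound $n^{-2+o(1)}$ is exactly what is needed: plugged into $\mu^{-2}|A|^3\,\E[X_{u,v}X_{u,v'}]$ it gives $n^{-3\varepsilon-3\delta+o(1)}\to 0$. So your plan can be repaired, but the covariance estimate you stated must be replaced by this weaker one, and establishing it requires genuine work specific to the parameter range.
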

    \begin{subproof}
        Let $\mu:=\E[X]$, $\sigma^2:=\Var[X]$, and $\nu := \E[X_{u,v}]$ for any $\{u,v\}\in\binom{A}{2}$.
        First, observe that $r_G(u,v)$ is precisely $\mathcal{B}(m,2pq)$.
        By Lemmas~\ref{lem:binomUpper} and~\ref{lem:binomLower},
        \begin{linenomath*}\begin{align*}
            \nu
            &= \Pr\left[\mathcal{B}(m,2pq)\le\left(2pq-c\sqrt{\frac{\ln m}{m}}\right)m\right]\\
            &
           =\exp\left( -\frac{c^2}{4pq(1-2pq)} \ln m+o(\ln m)\right)
           =n^{-\frac{3-3\varepsilon-4\delta}{2}+o(1)}, \\
            \mu
            &=\binom{\abs{A}}{2}\nu
            =\binom{\lfloor n^{1-\delta} \rfloor}{2} \nu
           =n^{\frac{1+3\varepsilon}{2}+o(1)} 
            \ge(2+\sqrt2) n^{1/2+\varepsilon}
            \text{ for sufficiently large $n$}.
        \end{align*}\end{linenomath*}
        
        Let $u$, $v$, $v'$ be fixed distinct vertices in~$A$.
        By Chebyshev's inequality, for sufficiently large $n$, we have
        \begin{linenomath*}\[
            \Pr[X\le n^{1/2+\varepsilon}]=\Pr[\mu-X\ge\mu-n^{1/2+\varepsilon}]\le\frac{\sigma^2}{(\mu-n^{1/2+\varepsilon})^2}
            \le\frac{2 \sigma^2}{\mu^2}.
        \]\end{linenomath*}
        We compute an upper bound for $\sigma^2$.
        Note that for distinct $a,a',b,b'\in A$, the random variables $X_{a,b}$ and $X_{a',b'}$ are independent.
        Then
        \begin{linenomath*}\begin{align*}
            \sigma^2
            &=
            \E[X^2]-\mu^2 \\
            &=
            \sum_{\{a,b\}\in {\binom{A}{2}}} \E[X_{a,b}]
           +\sum_{\substack{\{a,b\},\{a,b'\}\in\binom{A}{2}\\ b\neq b'}} \E[X_{a,b} X_{a,b'}]
           +\sum_{\substack{\{a,b\},\{a',b'\}\in\binom{A}{2}\\ \{a,b\}\cap \{a',b'\}=\emptyset}}\E[X_{a,b} X_{a',b'}]
           -\binom{|A|}{2}^2 \nu^2 \\
            &=
           \binom{\abs{A}}{2}\nu 
           +\abs{A}(\abs{A}-1)(\abs{A}-2)\E[X_{u,v}X_{u,v'}]
           +\binom{|A|}{2}\binom{|A|-2}{2}\nu^2
            -\binom{\abs{A}}{2}^2\nu^2\\
            &\le\binom{\abs{A}}{2}\nu 
           +n^{3(1-\delta)} \E[X_{u,v}X_{u,v'}]
            \\
            &= \mu+n^{3(1-\delta)} \E[X_{u,v} X_{u,v'}].
        \end{align*}\end{linenomath*}
        Thus, $\Pr[X\le n^{1/2+\varepsilon}]\le2\mu^{-1}+2\mu^{-2} n^{3(1-\delta)} \E[X_{u,v} X_{u,v'}]$.
        
        Since $\mu^{-1}=n^{-(1+3\varepsilon)/2+o(1)} \to 0$,
        it suffices to show that $\mu^{-2} n^{3(1-\delta)} \E[X_{u,v} X_{u,v'}] \to 0$.
        To do this, we now compute an upper bound for $\E[X_{u,v}X_{u,v'}]$.
        Let $c_1=p-\sqrt{\frac{4pq\ln m}{m}}$ and $c_2=p-\sqrt{\frac{2pq\ln m}{m}}$.
        We have
        \begin{linenomath*}\begin{equation}
        \begin{aligned}
            \E[X_{u,v}X_{u,v'}]
            &= \sum_{k=0}^{m} \E[X_{u,v}X_{u,v'} \mid \deg_{G}(u)=k]\cdot\Pr[\deg_{G}(u)=k]\\
            &\le\Pr[\deg_{G}(u)<c_1m]\\
            &\quad+\left(\max_{c_1m\le k<c_2m}\E[X_{u,v}X_{u,v'} \mid \deg_{G}(u)=k] \right)\Pr[c_1m \le\deg_G(u)<c_2m]\\
            &\quad+\left(\max_{k\ge c_2m}\E[X_{u,v}X_{u,v'} \mid \deg_{G}(u)=k]\right)\Pr[\deg_G(u)\ge c_2m].
        \end{aligned}
        \label{spadesuit}
        \end{equation}\end{linenomath*}
    By Lemma~\ref{lem:binomUpper}, we have
    \begin{linenomath*}\begin{align}
        \Pr[\deg_{G}(u)<c_1 m]&\le\exp(-2\ln m+o(\ln m))=n^{-2+o(1)}, \label{eq:low} \\
        \Pr[\deg_{G}(u)<c_2 m]&\le\exp(-\ln m+o(\ln m))=n^{-1+o(1)}.\label{eq:medium}
    \end{align}\end{linenomath*}

        Let $S$ be a subset of $B$ with $\abs{S}=\left(p-\gamma \sqrt{\frac{\ln m}{m}}\right)m$ and 
        $\gamma \le\sqrt{4pq}$.
        Let
        \begin{linenomath*}\[
            x:=\frac{\E[ \abs{N_G(v)\setminus S}]+\E[\abs{S\setminus N_G(v)}]}{m} =
            \frac{p\abs{B\setminus S}+q\abs{S}}{m}=2pq-(q-p)\gamma \sqrt{\frac{\ln m}{m}}.
        \]\end{linenomath*}
        Then $x(1-x)\le2pq(1-2pq)$.
        Since 
        \begin{linenomath*}\begin{equation}
            c \ge\sqrt{5.86 pq (1-2pq)}>\sqrt{4pq(1-4pq)}=(q-p)\sqrt{4pq} \ge(q-p)\gamma,
            \label{eq:estimatec}
        \end{equation}\end{linenomath*}
        we deduce the following bound from Lemma~\ref{lem:binomUpper}.
        \begin{linenomath*}\begin{equation}
        \begin{aligned}
            \E[X_{u,v} \mid N_{G}(u)=S]
            &=\Pr[r_{G}(u,v)\le2pqm-c\sqrt{m \ln m}\mid N_G(u)=S]\\
            &=\Pr[\abs{N_G(v)\setminus S}+\abs{S \setminus N_G(v)} \le2pqm-c\sqrt{m \ln m}]\\
            &\le\exp\left(-\frac{(c-(q-p)\gamma)^2}{2x(1-x)}\ln m+o(\ln m)\right)\\
            &\le\exp\left(-\frac{(c-(q-p)\gamma)^2}{4pq(1-2pq)}\ln m+o(\ln m)\right)
            =n^{-\frac{(c-(q-p)\gamma)^2}{4pq(1-2pq)}+o(1)}.
            \label{diamondsuit}
        \end{aligned}
        \end{equation}\end{linenomath*} 
        Since $p\in[0.4,0.5]$, we have that 
        \begin{linenomath*}\begin{align*} 
            (\sqrt{5.86}-\sqrt{2})\sqrt{1-2pq} &\ge(\sqrt{5.86}-\sqrt{2})\sqrt{1/2} >0.4 \ge2(q-p),\\
            (\sqrt{5.86}-2)\sqrt{1-2pq} &\ge(\sqrt{5.86}-2)\sqrt{1/2}>\sqrt{2} \cdot (0.2)\ge\sqrt{2}(q-p).
        \end{align*}\end{linenomath*}
        Thus, by~\eqref{eq:estimatec}, 
        \begin{linenomath*}\begin{align}
            (c-(q-p)\sqrt{4pq})^2 &\ge pq(\sqrt{5.86(1-2pq)}-2(q-p))^2>2pq(1-2pq),\label{eq:est-4pq}\\
            (c-(q-p)\sqrt{2pq})^2 &\ge pq(\sqrt{5.86(1-2pq)}-\sqrt{2}(q-p))^2> 4pq(1-2pq).\label{eq:est-2pq}
        \end{align}\end{linenomath*}
        Two random variables $X_{u,v}$ and $X_{u,v'}$ are conditionally independent given the event that $N_{G}(u)=S$.
        Furthermore, a function $f(\gamma):=n^{-\frac{(c-(q-p)\gamma)^2}{4pq(1-2pq)}}$ on $(-\infty,\sqrt{4pq}]$ is increasing by~\eqref{eq:estimatec} and thus it is maximized when $\gamma=\sqrt{4pq}$.
        Hence, we deduce by~\eqref{eq:medium},~\eqref{diamondsuit}, and~\eqref{eq:est-4pq}
        \begin{linenomath*}\begin{align*}
            \lefteqn{\left(\max_{c_1m\le k<c_2m}\E[X_{u,v}X_{u,v'} \mid \deg_{G}(u)=k] \right)
            \Pr[c_1m \le\deg_G(u)<c_2m]} \\
            &\le
            \left(\max_{k\ge c_1m}\E[X_{u,v}X_{u,v'} \mid \deg_{G}(u)=k] \right)
            \Pr[\deg_G(u)<c_2m]\\
            &= 
            \left(\max_{k\ge c_1m}\E[X_{u,v}\mid \deg_{G}(u)=k] \right)
            \left(\max_{k\ge c_1m}\E[X_{u,v'}\mid \deg_{G}(u)=k] \right)
            \Pr[\deg_G(u)<c_2m]\\
            &\le n^{-\frac{(c-(q-p)\sqrt{4pq})^2}{2pq(1-2pq)}-1+o(1)}\le n^{-2+o(1)}.
        \end{align*}\end{linenomath*}
        Similarly, by \eqref{diamondsuit} and \eqref{eq:est-2pq},
        \begin{linenomath*}\[
            \max_{k\ge c_2m}\E[X_{u,v} X_{u,v'}\mid \deg_{G}(u)=k] 
            \le n^{-\frac{(c-(q-p)\sqrt{2pq})^2}{2pq(1-2pq)}+o(1)}
            \le n^{-2+o(1)}.
        \]\end{linenomath*}
        The last two inequalities along with~\eqref{spadesuit} and~\eqref{eq:low} imply that $\E[X_{u,v}X_{u,v'}] \le 3n^{-2+o(1)}$.
        Therefore,
        \begin{linenomath*}\[
            \mu^{-2} n^{3(1-\delta)} \E[X_{u,v} X_{u,v'}] \le n^{2-3\varepsilon-3\delta+o(1)} \E[X_{u,v} X_{u,v'}]
            \le n^{-3\varepsilon-3\delta+o(1)} \to 0,
        \]\end{linenomath*}
        and this proves the claim.
    \end{subproof}

    \begin{claim}\label{cl:2}
        Let $\xi>0$ be a constant.
        With high probability, $\max\{X_u:u\in A\}\le n^\xi$.
    \end{claim}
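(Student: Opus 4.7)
The plan is to bound $\Pr[X_u \ge n^\xi]$ for each fixed $u\in A$ and then apply a union bound over the $|A|=\lfloor n^{1-\delta}\rfloor$ choices of $u$. The crucial fact is that since $G=G(A,B,p)$ is bipartite, the neighborhoods $\{N_G(v) : v\in A\}$ are mutually independent subsets of $B$; hence, conditional on $N_G(u)$, the indicators $\{X_{u,v} : v\in A\setminus\{u\}\}$ become mutually independent Bernoulli variables, which allows us to apply Chernoff bounds conditionally.

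I will fix a small constant $\gamma_0 > 0$ (depending on $p$, $\varepsilon$, $\delta$) to be chosen, and call $N_G(u)$ \emph{typical} when $\bigl||N_G(u)| - pm\bigr| \le \gamma_0\sqrt{m\ln m}$. Applying Lemma~\ref{lem:binomUpper} to $\deg_G(u)\sim\mathcal{B}(m,p)$ gives $\Pr[N_G(u)\text{ atypical}] \le 2n^{-\gamma_0^2/(2pq)+o(1)}$, while the calculation leading to~\eqref{diamondsuit} shows that for every typical $S$,
\[
\Pr[X_{u,v}=1 \mid N_G(u)=S] \le n^{-(c-(q-p)\gamma_0)^2/(4pq(1-2pq)) + o(1)}.
\]
I will choose $\gamma_0$ satisfying both (i) $\gamma_0^2/(2pq) > 1-\delta$, so that the atypical case contributes $o(1)$ after the union bound over $u$, and (ii) $(c-(q-p)\gamma_0)^2 > 4pq(1-2pq)(1-\delta)$, so that $\E[X_u \mid N_G(u)\text{ typical}] \le (|A|-1)\cdot n^{-(1-\delta)-\eta+o(1)} = o(1)$ for some $\eta>0$. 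Then Lemma~\ref{lem:Chernoffbound}(2) with $R = n^\xi$, which is eventually larger than $6\E[X_u\mid N_G(u)]$, gives the negligible tail $\Pr[X_u \ge n^\xi \mid N_G(u)\text{ typical}] \le 2^{-n^\xi}$.

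The main technical obstacle is verifying that (i) and (ii) can hold simultaneously for every $p\in[0.4,1/2]$. Substituting $c^2 = 2pq(1-2pq)(3-3\varepsilon-4\delta)$ and letting $\varepsilon,\delta\to 0$, the existence of a compatible $\gamma_0$ reduces to the elementary inequality $(\sqrt{3}-\sqrt{2})\sqrt{1-2pq} > q-p$, equivalently $(5-2\sqrt{6})(1-2pq) > (q-p)^2$. At the tight endpoint $p=0.4$ the left side equals $0.52(5-2\sqrt{6}) \approx 0.053$ while the right side is $0.04$, and at $p=1/2$ the right side vanishes; by continuity the inequality holds with enough slack throughout $[0.4,1/2]$ to absorb the $\varepsilon,\delta$ corrections. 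With such a $\gamma_0$ fixed, the union bound yields $\Pr[\max_{u\in A} X_u \ge n^\xi] \le |A|\cdot(2n^{-\gamma_0^2/(2pq)+o(1)} + 2^{-n^\xi}) = o(1)$, establishing the claim.
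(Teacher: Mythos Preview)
Your proof is correct and follows essentially the same approach as the paper: condition on $N_G(u)$, split into a ``typical'' degree event and its complement, bound the conditional mean of $X_u$ via~\eqref{diamondsuit}, apply Lemma~\ref{lem:Chernoffbound}(2) with $R=n^\xi$ in the typical case, and finish with a union bound over $u\in A$. The paper streamlines two of your choices: it uses only the one-sided condition $\deg_G(u)\ge c_2 m$ with the concrete threshold $c_2=p-\sqrt{2pq\ln m/m}$ (i.e.\ $\gamma_0=\sqrt{2pq}$), which already makes your condition~(i) automatic for any $\delta>0$, and it cites the pre-established inequalities~\eqref{eq:medium} and~\eqref{eq:est-2pq} rather than re-deriving feasibility of $\gamma_0$ by a continuity argument. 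These are cosmetic differences; the substance is the same.
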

    \begin{subproof}
        Let $c_2=p-\sqrt{\frac{2pq\ln m}{m}}$.
        Similar to \eqref{spadesuit}, for each $u\in A$,
        \begin{linenomath*}\begin{equation}\label{eq:summands}
            \Pr[X_u>n^\xi] \le\Pr[\deg_G(u)<c_2m]+\max_{k\ge c_2m}\Pr[X_u>n^\xi \mid \deg_G(u)=k].
        \end{equation}\end{linenomath*}
        Let $S$ be a subset of $B$ with $\abs{S}=\left(p-\gamma \sqrt{\frac{\ln m}{m}}\right)m$ and $\gamma \le\sqrt{2pq}$.
        Conditional on the event $N_G(u)=S$, the random variables $X_{u,v}$ for $v\in A\setminus \{u\}$
        are independent, identically distributed random variables
        with the Bernoulli distribution with mean $\nu' := \E[X_{u,v}\mid N_G(u)=S]$.
        Recall that
        \begin{linenomath*}\[
            \nu'\le n^{-\frac{(c-(q-p)\sqrt{2pq})^2}{4pq(1-2pq)}+o(1)}\le n^{-1+o(1)}
        \]\end{linenomath*}
        by~\eqref{diamondsuit} and \eqref{eq:est-2pq}.
        Hence, $n^\xi>6\nu'(\lfloor n^{1-\delta} \rfloor-1)$.
        By Lemma~\ref{lem:Chernoffbound}\ref{cond:chernoff2},
        \begin{linenomath*}\[ 
            \Pr[X_u>n^\xi \mid N_G(u)=S]
            =\Pr\left[ \mathcal{B}(\lfloor n^{1-\delta} \rfloor -1, \nu')
           >n^\xi \right]
            \le2^{-n^\xi}.
        \]\end{linenomath*}
        This together with~\eqref{eq:medium} and~\eqref{eq:summands} implies that for each $u\in A$, 
        $\Pr[X_u>n^\xi]\le n^{-1+o(1)}+2^{-n^{\xi}}$.
        By the union bound, $\Pr[X_u>n^\xi\text{ for some }u\in A] \le n^{1-\delta} \big( n^{-1+o(1)}+2^{-n^{\xi}} \big)\to0$ as $n\to\infty$.
    \end{subproof}
    
    By Claim~\ref{cl:1}, $X>n^{1/2+\varepsilon}$ with high probability.
    Applying Claim~\ref{cl:2} to $\xi=0.99 \varepsilon / 2$, we have that $X_u\le n^\xi$ for every $u\in A$ with high probability.
    We repeatedly choose a pair $(u,v)$ such that $r_G(u,v)\le2pqm-c\sqrt{m\ln m}$ and extract $u,v$ and all $w$ such that $\max\{r_G(u,w), r_G(v,w)\} \le2pqm-c\sqrt{m \ln m}$.
    Since $2 n^\xi n^{(1+\varepsilon)/2}<n^{1/2+\varepsilon}$, with high probability we can obtain at least $n^{(1+\varepsilon)/2}$ disjoint pairs $(u,v)$ of vertices in $A$ satisfying $r_G(u,v)\le2pqm-c\sqrt{m \ln m}$.
\end{proof}

The next lemma will be used to select~$\mu_1$ and~$\mu_2$ in~Lemma~\ref{lem:nonrandom}\ref{item:expAB}.

\begin{lemma}\label{lem:exposingAB}
    Let $p \in[0,1]$, $q := 1-p$, and $\varepsilon$ and $\delta$ be positive reals with $\varepsilon<1/2$ and $\delta<1$.
    Let $n\ge1$ be an integer and $m:=n-\lfloor n^{1-\delta}\rfloor$.
    Let $G=G(A,B,p)$ with $\abs{A}=\lfloor n^{1-\delta}\rfloor$ and $\abs{B}=m$, and $\Pi$ be a partition of $B$.
    With very high probability, for every nonempty subset $S \subseteq A$ with $\abs{S}\le10$, $\rdeg_{G/(\{S\}\cup\Pi)}(S)\le\abs{S} pqm+n^{1/2+\varepsilon}$.
\end{lemma}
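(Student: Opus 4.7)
The plan is to fix a nonempty subset $S\subseteq A$ with $\abs{S}\le 10$, bound $\rdeg_{G/(\{S\}\cup\Pi)}(S)$ for this fixed $S$ via a concentration inequality, and then take a union bound over the $\sum_{k=1}^{10}\binom{\abs{A}}{k}=O(n^{10})$ choices of $S$. Since a union bound over polynomially many very-high-probability events remains very-high-probability, this will finish the proof.

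For a fixed $S$, write the red-degree as a sum of independent Bernoulli indicators. For each $T\in\Pi$, let $Y_T$ be the indicator of the event that the bipartite graph $G[S,T]$ has both an edge and a non-edge, so that by definition of the quotient trigraph, $\rdeg_{G/(\{S\}\cup\Pi)}(S)=\sum_{T\in\Pi}Y_T$. The indicator $Y_T$ depends only on the edges of $G$ in $S\times T$, and since the sets $S\times T$ are pairwise disjoint for distinct $T\in\Pi$, the family $(Y_T)_{T\in\Pi}$ is mutually independent. A direct calculation gives
\[
  \E[Y_T] \;=\; 1 - p^{\abs{S}\abs{T}} - q^{\abs{S}\abs{T}} \;\le\; \abs{S}\abs{T}\,pq,
\]
where the inequality follows from Lemma~\ref{lem:pq}\ref{item:pq-2} (together with the remark following it that handles the $k=1,2$ cases). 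Summing over $T\in\Pi$ and using $\sum_{T\in\Pi}\abs{T}=\abs{B}=m$ yields $\mu:=\E[X]\le \abs{S}pq\,m$, where $X:=\sum_{T\in\Pi}Y_T$.

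It then suffices to show $\Pr[X\ge \mu + n^{1/2+\varepsilon}]\le \exp(-\omega(\ln n))$, because then $X\le \mu + n^{1/2+\varepsilon}\le \abs{S}pqm + n^{1/2+\varepsilon}$ as required. For this I would apply Lemma~\ref{lem:Chernoffbound} with a short case split: if $\mu\ge n^{1/2+\varepsilon}$, set $\delta=n^{1/2+\varepsilon}/\mu\in(0,1]$ and use part~(1) to get failure probability at most $\exp(-n^{1+2\varepsilon}/(3\mu))\le \exp(-n^{2\varepsilon}/30)$, using $\mu\le 10m\le 10n$; if $\mu<n^{1/2+\varepsilon}$, then $\mu+n^{1/2+\varepsilon}\le 2n^{1/2+\varepsilon}$, and either part~(2) (when $\mu\le n^{1/2+\varepsilon}/3$, so that $2n^{1/2+\varepsilon}\ge 6\mu$) or part~(1) with $\delta=1$ (when $n^{1/2+\varepsilon}/3\le \mu<n^{1/2+\varepsilon}$) yields failure probability at most $\exp(-\Omega(n^{1/2+\varepsilon}))$. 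In both branches the bound is $\exp(-\omega(\ln n))$ since $\varepsilon>0$, so the union bound over the $O(n^{10})$ subsets $S$ preserves the very-high-probability guarantee.

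The whole argument is essentially mechanical; the only mild subtlety is the case split in the Chernoff application, needed because $\mu$ may be smaller than the deviation $n^{1/2+\varepsilon}$ when $\Pi$ is a coarse partition, but this is handled by combining parts~(1) and~(2) of Lemma~\ref{lem:Chernoffbound} as above.
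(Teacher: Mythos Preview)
Your proposal is correct in approach and essentially matches the paper's proof: express $\rdeg_{G/(\{S\}\cup\Pi)}(S)$ as a sum of independent indicators over parts $T\in\Pi$, bound the mean by $\abs{S}pqm$ via Lemma~\ref{lem:pq}\ref{item:pq-2}, apply a Chernoff bound, and take a union bound over the $O(n^{10})$ choices of $S$.

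There is one small slip in your case split. In the sub-case $\mu\le n^{1/2+\varepsilon}/3$ you invoke part~(2) of Lemma~\ref{lem:Chernoffbound} with $R=2n^{1/2+\varepsilon}$, but this only bounds $\Pr[X\ge 2n^{1/2+\varepsilon}]$, which is the wrong direction since $\mu+n^{1/2+\varepsilon}\le 2n^{1/2+\varepsilon}$ and hence $\{X\ge \mu+n^{1/2+\varepsilon}\}\supseteq\{X\ge 2n^{1/2+\varepsilon}\}$. The fix is immediate: take $R=\mu+n^{1/2+\varepsilon}$ instead, which satisfies $R\ge 6\mu$ once $\mu\le n^{1/2+\varepsilon}/5$, and gives $\Pr[X\ge R]\le 2^{-R}\le 2^{-n^{1/2+\varepsilon}}$; then handle the remaining range $n^{1/2+\varepsilon}/5\le\mu<n^{1/2+\varepsilon}$ with part~(1) at $\delta=1$ exactly as you already do. The paper avoids this small-$\mu$ case split altogether by a deterministic observation: since $\rdeg_{G/(\{S\}\cup\Pi)}(S)\le\abs{\{T\in\Pi:\abs{S}\abs{T}\ge 2\}}$, either this count is at most $pqm$ (and then the bound $\rdeg\le pqm\le\abs{S}pqm$ holds trivially) or $\mu\ge 2pq\cdot pqm=2p^2q^2m>n^{1/2+\varepsilon}$ for large $n$, which lands us in your large-$\mu$ branch.
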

\begin{proof}
    We may assume that $pq\neq0$.
    Let $m' := |\Pi|$.
    Then $m' \le m \le n$.
    We denote by $\Pi=\{T_1,T_2,\ldots,T_{m'}\}$ and $t_i := |T_i|$ for each $i \in[m']$.
    For each nonempty subset $S\subseteq A$, the probability that $S$ and $T_i$ are completely joined by edges or there are no edges between them in $G$ is $p^{\abs{S}t_i}+q^{\abs{S}t_i}$.

    Note that $(1-p^{\abs{S}t_i}-q^{\abs{S}t_i})\le\abs{S} t_i pq$ by Lemma~\ref{lem:pq}\ref{item:pq-2}.
    Therefore, we deduce that 
    \begin{linenomath*}\[
        \mu:=\E\left[\rdeg_{G/(\{S\} \cup \Pi)}(S)\right]
        =
        \sum_{i=1}^{m'} (1-p^{\abs{S}t_i}-q^{\abs{S}t_i})
        \le
        \sum_{i=1}^{m'} \abs{S}t_i pq=\abs{S}pqm.
    \]\end{linenomath*}
    Note that $\mu\le m'\le n$.
    We may assume that $\abs{\{i\in[m']: \abs{S}t_i\ge2\}}>pqm$, because otherwise $\rdeg_{G/(\{S\}\cup\Pi)}(S)\le\abs{\{i\in[m']: \abs{S}t_i\ge2\}}\le pqm \le\abs{S} pqm+n^{1/2+\varepsilon}$.
    This implies that 
    $\mu >pqm\cdot2pq$, 
    because $1-p^{x}-q^{x}\ge2pq$ for all $x\ge2$.

    We now bound from above the probability that $\rdeg_{G/(\{S\}\cup\Pi)}(S)$ is larger than $\abs{S} pqm+n^{1/2+\varepsilon}$.
    Note that $n^{1/2+\varepsilon}<2p^2q^2m<\mu$ for sufficiently large $n$.
    By Lemma~\ref{lem:Chernoffbound}\ref{cond:chernoff1}, we deduce the following.
    \begin{linenomath*}\begin{align*}
        \Pr\left[\rdeg_{G/(\{S\}\cup\Pi)}(S)>\abs{S}pqm+n^{1/2+\varepsilon}\right]
        &\le\Pr\left[\rdeg_{G/(\{S\} \cup \Pi)}(S)>\left(1+\frac{n^{1/2+\varepsilon}}{\mu}\right)\mu\right]\\
        &\le\exp\left(-\frac{n^{1+2\varepsilon}}{3\mu}\right)\le\exp\left(-\frac{n^{2\varepsilon}}{3}\right).
    \end{align*}\end{linenomath*}
    By Lemma~\ref{lem:Stirling}, the number of subsets of~$A$ with size at most~$10$ is at most $(e\abs{A}/10)^{10}\le n^{10(1-\delta)}$, and therefore the result now follows from the union bound.
\end{proof}

We will use the following lemma to select~$\rho_2$ and~$\rho_3$ in~Lemma~\ref{lem:nonrandom}\ref{item:expA}.

\begin{lemma}\label{lem:exposingA}
    Let $p\in[0,1]$, $q := 1-p$, and $\varepsilon\in(0,1/2)$.
    Let $n$ and $k$ be positive integers with $k \le n/2$, and $G := G(n,p)$.
    Let $\{u_1,v_1\},\ldots,\{u_k,v_k\}$ be disjoint pairs of vertices in~$G$, and let $G':=G/\{\{u_i,v_i\}:i\in[k]\}$.
    Then with very high probability, for all $i \in[k]$,
    \begin{linenomath*}\[
        \rdeg_{G'}(\{u_i,v_i\})\le2pqn-2p^2q^2 k+n^{1/2+\varepsilon}.
    \]\end{linenomath*}
\end{lemma}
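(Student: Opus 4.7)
For each fixed $i \in [k]$, the plan is to express $\rdeg_{G'}(\{u_i, v_i\})$ as a sum of mutually independent Bernoulli random variables, apply a Chernoff bound, and then take a union bound over $i$. The parts of $G'$ other than $\{u_i, v_i\}$ consist of $n-2k$ singletons $\{w\}$ with $w$ outside every pair, together with the $k-1$ pairs $\{u_j, v_j\}$ with $j \neq i$. For each such part $x$, let $I_{i,x}$ denote the indicator that $x$ is a red neighbor of $\{u_i, v_i\}$ in $G'$, which is fully determined by the edges of $G$ between $\{u_i, v_i\}$ and $x$. A direct computation gives $\E[I_{i,\{w\}}] = 2pq$ and $\E[I_{i,\{u_j,v_j\}}] = 1 - p^4 - q^4 = 4pq - 2p^2q^2$, so that
\[
    \mu_i := \E[\rdeg_{G'}(\{u_i,v_i\})] = 2pq(n-2k) + (k-1)(4pq-2p^2q^2) = 2pqn - 2p^2q^2 k - 2pq(2-pq),
\]
which is at most $2pqn - 2p^2q^2 k$.

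The key observation is that for each fixed $i$, the family $\{I_{i,x}\}_x$ is mutually independent: the parts of $G'$ are pairwise disjoint, so the edge sets between $\{u_i, v_i\}$ and different parts $x$ are disjoint subsets of $\binom{V(G)}{2}$, and hence are present in $G(n,p)$ independently. This is the only point that requires any care; no independence across different $i$ is needed, since the union bound does not require it. Assuming $pq > 0$ (else $G'$ has no red edges and the conclusion is trivial), we have $\mu_i = \Theta(pq\cdot n)$, and so $\delta := n^{1/2+\varepsilon}/\mu_i \le 1$ for all sufficiently large $n$. The first part of Lemma~\ref{lem:Chernoffbound} then yields
\[
    \Pr\bigl[\rdeg_{G'}(\{u_i,v_i\}) \ge \mu_i + n^{1/2+\varepsilon}\bigr] \le \exp\bigl(-n^{1+2\varepsilon}/(3\mu_i)\bigr) \le \exp\bigl(-n^{2\varepsilon}/(6pq)\bigr),
\]
which is $\exp(-\omega(\ln n))$. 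Since $\mu_i \le 2pqn - 2p^2q^2 k$, the same bound applies to $\Pr\bigl[\rdeg_{G'}(\{u_i,v_i\}) \ge 2pqn - 2p^2q^2 k + n^{1/2+\varepsilon}\bigr]$, and a union bound over the at most $n/2$ indices $i \in [k]$ preserves the $\exp(-\omega(\ln n))$ decay, finishing the proof.
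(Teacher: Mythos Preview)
Your proof is correct and follows essentially the same approach as the paper: compute the mean of $\rdeg_{G'}(\{u_i,v_i\})$ as a sum of independent Bernoulli indicators, bound it above by $2pqn-2p^2q^2k$, apply the Chernoff bound from Lemma~\ref{lem:Chernoffbound} to get failure probability $\exp(-\Omega(n^{2\varepsilon}))$, and take a union bound over $i\in[k]$. The only cosmetic differences are that you make the independence argument explicit and use $\mu_i\le 2pqn$ rather than $\mu_i<n$ in the final Chernoff step.
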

\begin{proof}
    We may assume $pq\neq 0$.
    For every $i\in[k]$, let $A_i$ be the event that 
    $\rdeg_{G'}(\{u_i,v_i\})>2pqn-2p^2q^2 k+n^{1/2+\varepsilon}$.
    By Lemma~\ref{lem:pq}\ref{item:pq-2}, for all $i \in[k]$,
    \begin{linenomath*}\begin{align*}
        \mu:=\E[\rdeg_{G'}(\{u_i,v_i\})]
        &=2pq(n-2k)+(1-p^4-q^4)(k-1)\\
        &\le2pq(n-2)-2p^2q^2(k-1)\\
        &\le2pqn- 2p^2q^2 k.
    \end{align*}\end{linenomath*}
    Note that $\mu<n$.
    Note also that $1-p^4-q^4\ge2pq$ and therefore 
    $\mu\ge2pq(n-k+1)\ge pqn $, 
    implying that $n^{1/2+\varepsilon}<\mu$ for all sufficiently large $n$.
    By Lemma~\ref{lem:Chernoffbound}\ref{cond:chernoff1},
    \begin{linenomath*}\[ 
        \Pr[A_i]\le\exp\left(-\frac{n^{1+2\varepsilon}}{3\mu}\right)\le\exp\left(-\frac{1}{3}n^{2\varepsilon}\right).
    \]\end{linenomath*}
    By the union bound, $\Pr\left[\bigcup_{i=1}^kA_i\right]\le k\exp(-\Omega(n^{2\varepsilon}))=\exp(-\Omega(n^{2\varepsilon}))$.
\end{proof}

The upcoming lemma will be applied to choose~$\rho_2$ and~$\rho_3$ in~Lemma~\ref{lem:nonrandom}\ref{item:expBA}.
We remark that the set~$L$ in the following lemma will take the role of~$L$ in Lemma~\ref{lem:nonrandom}.

\begin{lemma}\label{lem:exposingBA}
    Let $p\in[0,1]$, $q:= 1-p$, and $\varepsilon$ and $\delta$ be positive reals less than $1/4$.
    Let $n\ge1$ be an integer and $m:=n-\lfloor n^{1-\delta} \rfloor$.
    Let $G=G(A,B,p)$ with $\abs{A}=\lfloor n^{1-\delta} \rfloor$ and $\abs{B}=m$, and $\Pi_B$ be a set of disjoint nonempty subsets of~$B$.
    With very high probability, for every set $\Pi_A$ of $\lfloor n^{1/2+\varepsilon} \rfloor$ disjoint pairs of vertices in $A$,
    there is $L \subseteq B$ with $\abs{L}<\sqrt{n}$ such that for all $S \in \Pi_B$ with $S \cap L=\emptyset$, we have 
    \begin{linenomath*}\[
        \rdeg_{G / (\Pi_A \cup \Pi_B)} (S)
        \le(1-p^{\abs{S}}-q^{\abs{S}})(n^{1-\delta}- 2\lfloor n^{1/2+\varepsilon}\rfloor )
       +(1-p^{2\abs{S}}-q^{2\abs{S}})\lfloor n^{1/2+\varepsilon}\rfloor+n^{1/2+\varepsilon/2}.
    \]\end{linenomath*}
\end{lemma}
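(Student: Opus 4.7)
The plan is to fix an arbitrary pair $(\Pi_A, S)$, apply a Chernoff bound to concentrate $\rdeg_{G/(\Pi_A\cup\Pi_B)}(S)$ around its mean, and then combine these per-$S$ estimates using the independence coming from the disjointness of parts of $\Pi_B$, closing with a union bound over $\Pi_A$. Set $P := \lfloor n^{1/2+\varepsilon}\rfloor$ and $k := |S|$. Because every pair $T\in\Pi_A$ and every singleton in $A\setminus\bigcup\Pi_A$ depend on a disjoint set of $A$--$S$ edges, the red-degree $\rdeg_{G/(\Pi_A\cup\Pi_B)}(S)$ is a sum of $|A|-P$ independent Bernoulli indicators: each pair contributes $1$ with probability $1-p^{2k}-q^{2k}$ (the $2k$ edges between $T$ and $S$ are neither all present nor all absent), and each singleton contributes $1$ with probability $1-p^{k}-q^{k}$. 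Therefore
\[
\mu_S := \E\!\left[\rdeg_{G/(\Pi_A\cup\Pi_B)}(S)\right] = (|A|-2P)(1-p^{k}-q^{k}) + P(1-p^{2k}-q^{2k}),
\]
which matches the first two terms of the claimed bound up to an additive $O(1)$ coming from the rounding of $n^{1-\delta}$. Applying Lemma~\ref{lem:Chernoffbound} to this sum yields
\[
\Pr\!\left[\rdeg_{G/(\Pi_A\cup\Pi_B)}(S) > \mu_S + n^{1/2+\varepsilon/2}\right] \leq \exp(-c\, n^{\delta+\varepsilon})
\]
for some constant $c>0$, since $\mu_S \leq |A| \leq n^{1-\delta}$ and $\delta+\varepsilon<1/2$.

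Call such an $S\in\Pi_B$ \emph{bad for $\Pi_A$}. For a fixed $\Pi_A$, the events ``$S$ is bad for $\Pi_A$'' across $S\in\Pi_B$ are mutually independent, because distinct parts of $\Pi_B$ are disjoint and thus depend on disjoint edges of $G[A,B]$. Hence, for any integer $k^*\geq1$,
\[
\Pr\!\left[\text{at least $k^*$ parts of $\Pi_B$ are bad for $\Pi_A$}\right] \leq \binom{|\Pi_B|}{k^*}\exp(-c\, k^* n^{\delta+\varepsilon}).
\]
The number of choices of $\Pi_A$ is at most $|A|^{2P}\leq\exp(2 n^{1/2+\varepsilon}\ln n)$. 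Taking $k^*:=\lceil 5\, n^{1/2-\delta}(\ln n)/c\rceil$ and union-bounding over all $\Pi_A$ shows that with very high probability, for every $\Pi_A$ fewer than $k^*$ parts of $\Pi_B$ are bad. Since $\delta>0$, we have $k^*=o(\sqrt n)$, so letting $L\subseteq B$ contain one vertex from each bad part yields $|L|<\sqrt n$, and every $S\in\Pi_B$ with $S\cap L=\emptyset$ is not bad and hence satisfies the claimed red-degree bound.

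The main obstacle is that the quantifier over $\Pi_A$ is much too large for a naive union bound: there are $\exp(\Theta(n^{1/2+\varepsilon}\ln n))$ pairings but the per-$S$ concentration only saves $\exp(-\Theta(n^{\delta+\varepsilon}))$, so one cannot hope to make every $S$ good for every $\Pi_A$ simultaneously. The way out is to use the freedom of the exception set $L$ and discard the $k^*=O(n^{1/2-\delta}\ln n)$ worst parts of $\Pi_B$ per $\Pi_A$; thanks to independence across disjoint $S$, the probability of having this many simultaneously bad parts decays like $\exp(-\Theta(k^* n^{\delta+\varepsilon}))=\exp(-\Theta(n^{1/2+\varepsilon}\ln n))$, which is precisely what is needed to absorb the union bound over $\Pi_A$.
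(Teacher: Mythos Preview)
Your proposal is correct and follows essentially the same approach as the paper: compute $\mu_S$ exactly, apply Chernoff per part $S$ to get a $\exp(-\Theta(n^{\delta+\varepsilon}))$ tail bound, exploit the independence across disjoint $S\in\Pi_B$ to bound the probability that many parts are simultaneously bad, and then union bound over the $\exp(O(n^{1/2+\varepsilon}\ln n))$ choices of $\Pi_A$. The only cosmetic difference is your threshold $k^*=\Theta(n^{1/2-\delta}\ln n)$ versus the paper's $t=\lceil\sqrt n\rceil$; both choices beat the union bound over $\Pi_A$ and leave $|L|<\sqrt n$, so the arguments are interchangeable.
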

\begin{proof}
    We may assume that $pq\neq 0$.
    Let $n_1 := |A|-
        2 \lfloor n^{1/2+\varepsilon}
        \rfloor
        \quad\text{and}\quad
        n_2 :=  \lfloor n^{1/2+\varepsilon} \rfloor$.
    Then $n_1+n_2 \le |A| \le n^{1-\delta}$.
    Let $\mathcal{A}$ be the collection of all sets of $n_2$ disjoint (unordered) pairs of vertices in $A$.
    We claim that with very high probability, for every $\Pi_A \in \mathcal{A}$ and for every choice of $t:=\lceil\sqrt{n}\rceil$ distinct elements $S_1,\ldots,S_t$ in $\Pi_B$,
    there is $i\in[t]$ such that
    \begin{linenomath*}\[
        \rdeg_{G/(\Pi_A\cup\Pi_B)}(S_i)\le(1-p^{\abs{S_i}}-q^{\abs{S_i}})n_1+(1-p^{2\abs{S_i}}-q^{2\abs{S_i}})n_2+n^{1/2+\varepsilon/2}.
    \]\end{linenomath*}
    
    First, let us explain why this claim would imply the lemma.
    Suppose the claim holds.
    Then with very high probability, for each $\Pi_A \in \mathcal{A}$, there are less than $t$ elements $T$ of $\Pi_B$ 
    such that 
    \begin{linenomath*}\[
        \rdeg_{G / (\Pi_A \cup \Pi_B)} (T)
       >(1-p^{\abs{T}}-q^{\abs{T}})n_1
       +(1-p^{2\abs{T}}-q^{2\abs{T}}) n_2+n^{1/2+\varepsilon/2}.
    \]\end{linenomath*}
    Let $L$ be a minimal subset of $B$ such that $L$ intersects all such elements $T$ of $\Pi_B$.
    Then $\abs{L}<t$ and for all $S\in \Pi_B$ with $S\cap L=\emptyset$, 
    \begin{linenomath*}\[
        \rdeg_{G/(\Pi_A \cup\Pi_B)}(S)\le(1-p^{\abs{S}}-q^{\abs{S}})n_1+(1-p^{2\abs{S}}-q^{2\abs{S}})n_2+n^{1/2+\varepsilon/2}, 
    \]\end{linenomath*}
    proving this lemma.
    
    Now let us prove the claim.
    Fix $\Pi_A \in\mathcal{A}$ and distinct nonempty sets $S_1,\ldots,S_t$ in $\Pi_B$.
    Denote $G' := G/(\Pi_A \cup \Pi_B)$.
    For each $i\in[t]$, let
    \begin{linenomath*}\[ 
        p_i':=1-p^{\abs{S_i}}-q^{\abs{S_i}},\quad\text{and}\quad p_i'':=1-p^{2\abs{S_i}}-q^{2\abs{S_i}},
    \]\end{linenomath*}
    and let $\mathcal{X}_i$ be the event that
    \(
        \rdeg_{G'}(S_i)>p_i' n_1+p_i'' n_2+n^{1/2+\varepsilon/2}
    \).
    We remark that $\mathcal{X}_1,\ldots,\mathcal{X}_t$ are mutually independent.

    Since $\Pi_A$ induces a partition of $A$ into $n_1$ singletons and $n_2$ pairs and $S_i$ is nonempty,
    \begin{linenomath*}\[
        \mu_i:=\E[\rdeg_{G'}(S_i)]=p_i'n_1+p_i''n_2.
    \]\end{linenomath*}
    Observe that $\mu_i\ge p_i''n_2\ge2pq(n^{1/2+\varepsilon}-1)>n^{1/2+\varepsilon/2}$ for all sufficiently large~$n$ and $\mu_i<n_1+n_2\le n^{1-\delta}$.
    Hence, by Lemma~\ref{lem:Chernoffbound}\ref{cond:chernoff1},
    \begin{linenomath*}\[ 
        \Pr[\mathcal{X}_i]=\Pr\left[\rdeg_{G'}(S_i)>\mu_i+n^{1/2+\varepsilon/2} \right]
        \le\exp\left(-\frac{n^{1+\varepsilon}}{3\mu_i}\right)
        \le\exp\left(-\frac{1}{3}n^{\varepsilon+\delta}\right).
    \]\end{linenomath*}
    Note that
    \begin{linenomath*}\[
        \Pr\left[\bigcap_{i=1}^{t}\mathcal{X}_i\right]=\prod_{i=1}^{t}\Pr[\mathcal{X}_i]
        \le\exp\left(-\frac{1}{3}t n^{\varepsilon+\delta}\right)
        \le\exp\left(-\frac{1}{3}n^{1/2+\varepsilon+\delta}\right).
    \]\end{linenomath*}
    Since $\abs{\Pi_B}\le\abs{B}\le n$, we have
    \begin{linenomath*}\[
       \binom{\abs{\Pi_B}}{t}\le\exp(t \ln n)=\exp(o(n^{1/2+\varepsilon+\delta})).
    \]\end{linenomath*}
    Also, because $|A| \le n$, we have
    \begin{linenomath*}\[ 
        \abs{\mathcal{A}}\le\binom{|A|}{2}^{n_2}\le\exp\left(2n_2\ln n \right)=\exp(o(n^{1/2+\varepsilon+\delta})).
    \]\end{linenomath*}
    Thus, by the union bound, the claim is proved.
\end{proof}

The following lemma applied to $\abs{S}=1$ will be used for $\lambda_3$ in Lemma~\ref{lem:nonrandom}\ref{item:expBsingles}.
Although we will apply the lemma only for the case that $S$ is a singleton, we provide a general statement with $S$ having an arbitrary size, since its size is not critical in the proof.

\begin{lemma}\label{lem:single red}
    Let $p \in[0,1]$, $q:= 1-p$, $G := G(n,p)$, $\varepsilon \in(0,1/2)$, and $\Pi$ be a partition of~$V(G)$.
    Then, with very high probability, for all $S\in \Pi$, 
    \begin{linenomath*}\[
        \rdeg_{G/\Pi}(S)\le\abs{S}pqn+n^{1/2+\varepsilon}.
    \]\end{linenomath*}
\end{lemma}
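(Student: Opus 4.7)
Fix a part $S\in\Pi$. I would write $\rdeg_{G/\Pi}(S)=\sum_{T\in\Pi\setminus\{S\}}X_T$, where $X_T$ is the indicator that the bipartite random graph $G[S,T]$ is neither empty (probability $q^{\abs{S}\abs{T}}$) nor complete (probability $p^{\abs{S}\abs{T}}$); hence $\E[X_T]=1-p^{\abs{S}\abs{T}}-q^{\abs{S}\abs{T}}$. The key observation is that the $X_T$ are mutually independent, because distinct pairs $(S,T_1)$, $(S,T_2)$ involve disjoint sets of potential edges of $G(n,p)$. Using the remark after Lemma~\ref{lem:pq} (giving $1-p^k-q^k\le kpq$ for all positive integers $k$), one can then bound
\[
  \mu:=\E[\rdeg_{G/\Pi}(S)]=\sum_{T\ne S}\bigl(1-p^{\abs{S}\abs{T}}-q^{\abs{S}\abs{T}}\bigr)\le \abs{S}pq\sum_{T\ne S}\abs{T}\le \abs{S}pqn.
\]
So the task reduces to showing $\Pr[\rdeg_{G/\Pi}(S)\ge \mu+n^{1/2+\varepsilon}]=\exp(-\omega(\ln n))$; we may assume $pq\ne 0$, since otherwise $G$ has no red edges and the conclusion is trivial.

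The main technical wrinkle is that $\mu$ may be either large or small compared to $n^{1/2+\varepsilon}$, so no single application of Lemma~\ref{lem:Chernoffbound} covers all cases. I plan to split into three regimes. If $\mu\ge n^{1/2+\varepsilon}$, then applying Lemma~\ref{lem:Chernoffbound}(1) with $\delta=n^{1/2+\varepsilon}/\mu\le 1$ and using $\mu\le n$ gives a bound of $\exp(-n^{2\varepsilon}/3)$. If $n^{1/2+\varepsilon}/6\le \mu<n^{1/2+\varepsilon}$, then Lemma~\ref{lem:Chernoffbound}(1) with $\delta=1$ yields $\Pr[\rdeg_{G/\Pi}(S)\ge 2\mu]\le \exp(-\mu/3)\le \exp(-n^{1/2+\varepsilon}/18)$, and the inequality $2\mu\le \mu+n^{1/2+\varepsilon}$ makes this suffice. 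If $\mu<n^{1/2+\varepsilon}/6$, then $R:=n^{1/2+\varepsilon}$ satisfies $R\ge 6\mu$, so Lemma~\ref{lem:Chernoffbound}(2) gives $\Pr[\rdeg_{G/\Pi}(S)\ge R]\le 2^{-n^{1/2+\varepsilon}}$; since $R\le \mu+n^{1/2+\varepsilon}$, this closes the last case.

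Finally, since $\abs{\Pi}\le n$, a union bound over all parts $S\in\Pi$ preserves the very-high-probability conclusion. The whole argument is essentially routine once the correct case split is made; the only subtle point is recognizing that when $\mu$ is too small, the relative-error Chernoff bound (1) loses control and one must instead invoke the absolute-tail form (2). This is the same mild obstacle that appears in the proofs of Lemmas~\ref{lem:exposingAB}--\ref{lem:exposingBA}, and is handled in the same style.
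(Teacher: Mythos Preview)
Your argument is correct. The independence of the $X_T$, the expectation bound via $1-p^k-q^k\le kpq$, the three-regime Chernoff analysis, and the final union bound are all sound.

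The paper takes a slightly different route that avoids your case split. Instead of handling small $\mu$ separately, it observes that $\rdeg_{G/\Pi}(S)$ is deterministically bounded by the number of $T\in\Pi\setminus\{S\}$ with $\abs{S}\abs{T}\ge 2$; if this count is at most $pqn$, then the desired inequality $\rdeg_{G/\Pi}(S)\le\abs{S}pqn+n^{1/2+\varepsilon}$ holds trivially. Otherwise, each such $T$ contributes at least $1-p^2-q^2=2pq$ to $\mu$, forcing $\mu>2p^2q^2 n>n^{1/2+\varepsilon}$ for large $n$, so only your Case~1 is needed. This is exactly the trick used in Lemma~\ref{lem:exposingAB} as well. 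Your approach trades that structural observation for a more mechanical but equally valid three-way split on the size of $\mu$; it is marginally longer but has the advantage of not relying on any lower bound for the summands.
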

\begin{proof}
    We may assume that $pq\neq 0$.
    By Lemma~\ref{lem:pq}\ref{item:pq-2}, 
    \begin{linenomath*}\[ 
        \mu_S:=\E[ \rdeg_{G/\Pi}(S) ]
       =\sum_{T \in \Pi \setminus \{S\}} \left( 1-p^{\abs{S}\cdot|T|}-q^{\abs{S}\cdot|T|} \right)
        \le\abs{S}pq \sum_{T \in \Pi \setminus \{S\}} |T|
        \le\abs{S}pq n
    \]\end{linenomath*}
    and $\mu_S< n$.
    Since $ \rdeg_{G/\Pi}(S)\le\abs{\{T\in \Pi\setminus\{S\}: \abs{S}\abs{T}\ge2\}}$, we may assume that 
    $\abs{\{T\in \Pi\setminus\{S\}: \abs{S}\abs{T}\ge2\}}>pqn$
    and therefore $\mu_S>(2pq)(pqn)=2p^2q^2n$, implying that $\mu_S>n^{1/2+\varepsilon}$ for all sufficiently large~$n$.
    By Lemma~\ref{lem:Chernoffbound}\ref{cond:chernoff1},
    \begin{linenomath*}\begin{align*}
        \Pr[\rdeg_{G/\Pi}(S)>\abs{S}pqn+n^{1/2+\varepsilon}]
        \le\exp\left(-\frac{n^{1+2\varepsilon}}{3\mu_S}\right)
        \le\exp\left(-\frac{1}{3}n^{2\varepsilon}\right).
    \end{align*}\end{linenomath*}
    Since $\abs{\Pi}\le n$, 
    the result follows by the union bound.
\end{proof}

Finally, we provide a lemma for choosing $\nu_2$, $\nu_3$, and $\nu_4$ in Lemma~\ref{lem:nonrandom}\ref{item:expB}.
We remark that by Lemma~\ref{lem:ab}\ref{lem:ab ab}, $1/3<\alpha(0.5)\le\alpha(p)\le\alpha(0.4)<1/2 $ if $p\in[0.4,0.5]$.

\begin{lemma}\label{lem:exposingB}
    Let $p\in(p^*,1/2]$, $q:=1-p$, and $\alpha := \alpha(p)$.
    There exists $c' >0$ only depending on $p$ such that the following hold.
    Let $\varepsilon$ and $\delta$ be positive reals less than~$1$.
    Let $n\ge1$ be an integer and $m:=n-\lfloor n^{1-\delta} \rfloor$.
    Let $G := G(m,p)$ and for each integer $1\le i\le(m+\lfloor \alpha n \rfloor)/2$, let $\mathcal{B}_i := \mathcal{B}^{m,\lfloor \alpha n \rfloor}_i$.
    Then with very high probability, for every integer $1\le i\le(m+\lfloor \alpha n \rfloor)/2$ and $S \in \mathcal{B}_i$,
    \begin{enumerate}[label=\rm(\roman*)]
        \item if $\abs{S}=2$, then $\rdeg_{G/\mathcal{B}_i}(S)\le2pqm+n^{1/2+\varepsilon/2}$,
        \item if $\abs{S}=3$, then $\rdeg_{G/\mathcal{B}_i}(S)\le2pqn-3pqn^{1-\delta}+n^{1/2+\varepsilon/2}$, and
        \item if $\abs{S}=4$, then $\rdeg_{G/\mathcal{B}_i}(S)\le(2pq-c')n$.
    \end{enumerate}
\end{lemma}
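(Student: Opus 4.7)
The plan is to compute $\E[\rdeg_{G/\mathcal{B}_i}(S)]$ in each case, show it lies below the stated target (allowing an additive $n^{1/2+\varepsilon/2}$ slack where needed), apply Lemma~\ref{lem:Chernoffbound} to control the deviation, and union-bound over the at most $O(n^2)$ pairs $(i,S)$ to obtain a very-high-probability statement. Throughout, write $\gamma_k := 1-p^k-(1-p)^k$, so that disjoint nonempty sets $S,T$ contribute a red edge between them in $G/\mathcal{B}_i$ with probability $\gamma_{|S|\cdot|T|}$ and a singleton $v\notin\bigcup\mathcal{B}_i$ contributes a red edge to $S$ with probability $\gamma_{|S|}$. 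Setting $b := m - 2a$, book-keeping the construction of $B^{m,a}_i$ yields: in \emph{Range~1} (stages $i = a+j$ with $1 \le j \le b$), $\mathcal{B}_i$ contains $j$ triples, $a-j$ pairs, no quadruples, and leaves $b-j$ singletons outside; in \emph{Range~2} (stages $i = a+b+k$ with $1 \le k \le (a-b)/2$), $\mathcal{B}_i$ contains $k$ quadruples, $b$ triples, $a-b-2k$ pairs, and no singletons.

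For case~(i) with $|S|=2$, Lemma~\ref{lem:pq}\ref{item:pq-2} supplies $\gamma_{2t}\le 2tpq$ for every $t\ge1$; summing across $T\in\mathcal{B}_i\setminus\{S\}$ and singletons gives $\E[\rdeg(S)]\le 2pq(m-2)\le 2pqm$, and Lemma~\ref{lem:Chernoffbound} delivers the required slack. For case~(ii) with $|S|=3$, the expected red-degree equals $(j-1)\gamma_9 + (a-j)\gamma_6 + (b-j)\gamma_3$ in Range~1 and $(b-1)\gamma_9 + (a-b-2k)\gamma_6 + k\gamma_{12}$ in Range~2; by Lemma~\ref{lem:pqadd} applied with $(m,n)=(3,6)$ and then $(6,6)$, both $\gamma_9-\gamma_6-\gamma_3$ and $\gamma_{12}-2\gamma_6$ are negative, so the expectation is maximized at the first formation of a triple (stage $a+1$), giving
\[
\E[\rdeg_{G/\mathcal{B}_i}(S)] \le (a-1)\gamma_6 + (b-1)\gamma_3 = m\gamma_3 - a(2\gamma_3-\gamma_6) + O(1).
\]
Using $\gamma_3=3pq$, the identity $\alpha(2\gamma_3-\gamma_6)=pq$ coming from~\eqref{eq:alpha}, and $a\ge\alpha n-1$, this simplifies to $2pqn - 3pqn^{1-\delta} + O(1)$, and Lemma~\ref{lem:Chernoffbound} produces the slack $n^{1/2+\varepsilon/2}$.

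For case~(iii) with $|S|=4$, the expected red-degree in Range~2 equals $(a-b-2k)\gamma_8 + b\gamma_{12} + (k-1)\gamma_{16}$, and Lemma~\ref{lem:pqadd} with $(m,n)=(8,8)$ yields $2\gamma_8-\gamma_{16}>0$, so the maximum occurs at $k=1$:
\[
\E[\rdeg_{G/\mathcal{B}_i}(S)] \le (a-b-2)\gamma_8 + b\gamma_{12} \le n\bigl((3\alpha-1)\gamma_8 + (1-2\alpha)\gamma_{12}\bigr) + O(1).
\]
By Lemma~\ref{lem:ab}\ref{lem:ab long ineq} combined with $p>p^*$, the bracketed coefficient is strictly less than $2pq$, so the choice $c'(p) := \tfrac{1}{2}\bigl(2pq - (3\alpha-1)\gamma_8 - (1-2\alpha)\gamma_{12}\bigr)>0$ together with Lemma~\ref{lem:Chernoffbound} produces the desired bound. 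The main obstacle is arranging the algebra at the two critical stages ($a+1$ for~(ii) and $a+b+1$ for~(iii)) so that the precise definitions~\eqref{eq:alpha} and~\eqref{eq:beta} fall out of the expressions for $\E[\rdeg(S)]$; this is precisely what Lemma~\ref{lem:ab}\ref{lem:ab long ineq} is designed to enable, and why the threshold $p^*$ is defined as the unique zero of $\alpha-\beta$.
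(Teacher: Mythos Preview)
Your proposal is correct and follows essentially the same approach as the paper: in each case you bound $\E[\rdeg_{G/\mathcal{B}_i}(S)]$ by observing (via Lemma~\ref{lem:pqadd}) that it is maximized at the first stage where a set of size $|S|$ appears, then plug in $a\approx\alpha n$, $b\approx(1-2\alpha)n$ and invoke the definition of $\alpha$ for~(ii) and Lemma~\ref{lem:ab}\ref{lem:ab long ineq} for~(iii), followed by Chernoff and a union bound over the $O(n^2)$ pairs $(i,S)$. The paper packages the monotonicity step as a single inequality~\eqref{eq:increase} rather than your explicit Range~1/Range~2 bookkeeping, but this is only a cosmetic difference; one minor point is that in~(iii) the substitution $a-b=3a-m$ carries an $n^{1-\delta}(\gamma_8-\gamma_{12})\le0$ term rather than just $O(1)$, which the paper tracks explicitly but which does not affect your conclusion.
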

\begin{proof}
    Fix a positive integer $i\le(m+\lfloor \alpha n \rfloor)/2$ and $S\in \mathcal{B}_i$.
    Each vertex of $G/\mathcal{B}_i$ is either an element of $\mathcal{B}_i$ or a singleton $\{v\}$ with $v \in V(G)\setminus \bigcup_{X \in \mathcal{B}_i} X$.
    Note that $\abs{V(G/\mathcal{B}_i)}= n-i \ge n-(m+\lfloor \alpha n \rfloor)/2 \ge n/4$, since $\alpha \le\alpha(0.4)<1/2$ by Lemma~\ref{lem:ab}.
    By Lemma~\ref{lem:pq}\ref{item:pq-2}, we have
    \begin{linenomath*}\begin{align*}
        \mu:=&\,\E[\rdeg_{G/{\mathcal{B}_i}}(S)]\\
        =&\left(\sum_{X\in V(G/\mathcal{B}_i)} (1-p^{\abs{S}\abs{X}}-q^{\abs{S}\abs{X}})\right)
       -(1-p^{\abs{S}^2}-q^{\abs{S}^2})<\abs{S}pqm.
    \end{align*}\end{linenomath*}
    Note that $\mu<m<n$.
    Observe that 
    $\mu\ge2pq (\abs{V(G/\mathcal{B}_i)}-1)\ge2pq(n/4-1)>n^{1/2+\varepsilon/2}$ for all sufficiently large~$n$.

    Since $\abs{S}\ge2$, by Lemma~\ref{lem:pqadd}, 
    for all positive integers $i$, we have 
    \begin{linenomath*}\begin{equation}
        \label{eq:increase}
        \sum_{X\in V(G/\mathcal{B}_{i+1})}(1-p^{\abs{S}\abs{X}}-q^{\abs{S}\abs{X}})
        <\sum_{X\in V(G/\mathcal{B}_i)}(1-p^{\abs{S}\abs{X}}-q^{\abs{S}\abs{X}}).
    \end{equation}\end{linenomath*}
    If $\abs{S}=2$, then by Lemma~\ref{lem:Chernoffbound}\ref{cond:chernoff1}, we deduce that
    \begin{linenomath*}\begin{align*}
        \Pr\left[\rdeg_{G/\mathcal{B}_i}(S)>2pqm+n^{1/2+\varepsilon/2}\right]
        &\le\Pr\left[\rdeg_{G/\mathcal{B}_i}(S)\ge\mu+n^{1/2+\varepsilon/2}\right]\\
        &\le\exp\left(-\frac{n^{1+\varepsilon}}{3\mu}\right)\le\exp\left(-\frac{1}{3} n^\varepsilon \right).
    \end{align*}\end{linenomath*}
    If $\abs{S}=3$, then 
    $i> \lfloor \alpha n \rfloor$ by the definition of $\mathcal{B}_i$.
    Note that $m-2\lfloor\alpha n\rfloor>0$ for all sufficiently large $n$ as $\alpha<1/2$.
    We have that
    \begin{linenomath*}\begin{align*}
        \mu &\le\sum_{X\in V(G/\mathcal{B}_{\lfloor\alpha n\rfloor+1})}(1-p^{\abs{S}\abs{X}}-q^{\abs{S}\abs{X}})-(1-p^9-q^9)\tag*{by~\eqref{eq:increase}}\\
        &=(1-p^3-q^3)(m-2\lfloor \alpha n\rfloor-1)+(1-p^6-q^6)(\lfloor\alpha n\rfloor-1)\\
        &<(1-p^3-q^3)(m-2\alpha n+1)+(1-p^6-q^6)(\alpha n-2)\\
        &\le(1-p^3-q^3)(m-2\alpha n)+(1-p^6-q^6)\alpha n-(1-p^6-q^6),
    \end{align*}\end{linenomath*}
    in which the second last inequality holds because $\lfloor\alpha n\rfloor>\alpha n-1$ and $(1-p^6-q^6)-2(1-p^3-q^3)<0$ by Lemma~\ref{lem:pqadd}.
    Recall that $1-p^3-q^3=3pq$ and
    \begin{linenomath*}\[
        \alpha=\alpha(p)=\frac{pq}{2(1-p^3-q^3)-(1-p^6-q^6)}=\frac{pq}{6pq-(1-p^6-q^6)}.
    \]\end{linenomath*}
    Then we have that 
    \begin{linenomath*}\begin{align*}
        \mu&\le(1-p^3-q^3)(m-2\alpha n)+(1-p^6-q^6)\alpha n-(1-p^6-q^6)\\
        &=(3pq(1-2\alpha)+(1-p^6-q^6)\alpha)n-3pq\lfloor n^{1-\delta}\rfloor-(1-p^6-q^6)\\
        &=2pqn-3pq\lfloor n^{1-\delta}\rfloor-(1-p^6-q^6)\tag*{by the definition of~$\alpha$}\\
        &\le2pqn-3pqn^{1-\delta}+3pq-(1-p^6-q^6)\\
        &\le2pqn-3pqn^{1-\delta}.
    \end{align*}\end{linenomath*}
    By Lemma~\ref{lem:Chernoffbound}\ref{cond:chernoff1},
    \begin{linenomath*}\begin{align*}
        \Pr\left[\rdeg_{G/\mathcal{B}_i}(S)>2pqn -3pqn^{1-\delta}+n^{1/2+\varepsilon/2}\right] 
        &\le\Pr\left[\rdeg_{G/\mathcal{B}_i}(S)\ge\mu+n^{1/2+\varepsilon/2}\right]\\
        &\le\exp\left(-\frac{n^{1+\varepsilon}}{3\mu}\right)
        \le\exp\left(-\frac{1}{3} n^{\varepsilon}\right).
    \end{align*}\end{linenomath*}
    Now it remains to consider the case that $\abs{S}=4$.
    Then $i>m-\lfloor\alpha n\rfloor$.
    We have
    \begin{linenomath*}\begin{align*}
        \mu&\le\sum_{X\in V(G/\mathcal{B}_{m-\lfloor\alpha n\rfloor+1})}(1-p^{\abs{S}\abs{X}}-q^{\abs{S}\abs{X}})-(1-p^{16}-q^{16})\tag*{by~\eqref{eq:increase}}\\
        &\le(1-p^8-q^8)(\lfloor\alpha n\rfloor-(m-2\lfloor\alpha n\rfloor)-2)+(1-p^{12}-q^{12})(m-2\lfloor\alpha n \rfloor)\\
        &\le(1-p^8-q^8)(3\alpha n -m-2)+(1-p^{12}-q^{12})(m-2\alpha n)\\
        &=((1-p^8-q^8)(3\alpha-1)+(1-p^{12}-q^{12})(1-2\alpha))n\\
        &\quad+((1-p^8-q^8)-(1-p^{12}-q^{12}))\lfloor n^{1-\delta} \rfloor
        -2(1-p^8-q^8)\\
        &\le((1-p^8-q^8)(3\alpha-1)+(1-p^{12}-q^{12})(1-2\alpha))n,
    \end{align*}\end{linenomath*}
    where the third inequality holds because $2(1-p^{12}-q^{12})<3(1-p^8-q^8)$ by Lemma~\ref{lem:pq}\ref{item:pq-1}
    and the last inequality holds because $1-p^8-q^8\le1-p^{12}-q^{12}$.
    By Lemma~\ref{lem:ab}\ref{lem:ab long ineq}, there exists some constant $c'>0$ that only depends on $p$ such that $c'\le pq/4$ and
    \begin{linenomath*}\[ 
        \mu\le((1-p^8-q^8)(3\alpha-1)+(1-p^{12}-q^{12})(1-2\alpha))n
        \le
        (2pq-2c')n.
    \]\end{linenomath*}
    Note that $\mu \ge2pq(n/4-1)>c'n$, as we assume $n$ is large.
    By Lemma~\ref{lem:Chernoffbound}\ref{cond:chernoff1},
    \begin{linenomath*}\begin{align*}
        \Pr\left[\rdeg_{G/\mathcal{B}_i}(S)>(2pq-c')n\right]
        &\le\Pr\left[\rdeg_{G/\mathcal{B}_i}(S)\ge\mu+c'n\right]\\
        &\le\exp\left(-\frac{(c'n)^2}{3\mu}\right)=\exp\left(-\frac{(c')^2 n}{3}\right).
    \end{align*}\end{linenomath*}
    The result follows by the union bound.
\end{proof}

\subsection{Proof of Theorem~\ref{thm:upper2}}\label{subsec:upper}

\begin{proof}[Proof of Theorem~\ref{thm:upper2}]
    Let $G:=G(n,p)$.
    We arbitrarily choose constants $\varepsilon$ and $\delta$ in $(0,0.005)$, and let
    \begin{linenomath*}\[
        c:=\sqrt{2pq(1-2pq)(3-6\varepsilon-4\delta)}.
    \]\end{linenomath*}
    By the arbitrary choices of $\varepsilon$ and $\delta$,
    it suffices to show that with high probability
    \begin{linenomath*}\[
        \tww(G)\le2pqn-c\sqrt{n\ln n}+o(\sqrt{n\ln n}).
    \]\end{linenomath*}
    
    Let $\alpha:=\alpha(p)$, $m:=n-\lfloor n^{1-\delta}\rfloor$, $s:=\lfloor n^{1/2+\varepsilon}\rfloor$, and $r:=\lfloor(m+\lfloor\alpha n\rfloor)/2\rfloor$.
    Let $B := [m]$ and $A := [n] \setminus [m]$ be disjoint subsets of $V(G)=[n]$.
    For each $i\in[r]$, let $\mathcal{B}_i := \mathcal{B}_i^{m,\lfloor \alpha n \rfloor}$, which is a partition of $B$.
    To deduce the upper bound for $\tww(G)$, we will apply Lemma~\ref{lem:nonrandom}.
    We check the conditions \ref{item:expABpairs}--\ref{item:expB} in Lemma~\ref{lem:nonrandom} as follows.
    
    \medskip

    (A)
    By Lemma~\ref{lem:pairs general} applied to $G[A,B]$
    with $2\varepsilon$ as $\varepsilon$, 
    with high probability, there are disjoint pairs $\{u_1,v_1\},\ldots,\{u_s,v_s\}$ of vertices in $A$ such that for every $i \in[s]$,
    \begin{linenomath*}\[
        r_{G[A,B]}(u_i,v_i)\le2pqm-c\sqrt{m \ln m}.
    \]\end{linenomath*}
    For each $i\in[s]$, let $\mathcal{A}_0 := \{\{w\} : w\in A\}$ and 
    \begin{linenomath*}\[
        \mathcal{A}_{i} := \left\{ \{u_j,v_j\} : j\in[i] \right\} \cup \left\{ \{w\} : w \in A \setminus \bigcup_{j=1}^i \{u_j,v_j\} \right\}
    \]\end{linenomath*}
    be partitions of $A$.
    Note that $r_{G[A,B]}(u_i,v_i)=\rdeg_{G[A,B]/\mathcal{A}_s}(\{u_i,v_i\})$.
    
    \medskip
    
    (B)
    By Lemma~\ref{lem:exposingAB} applied to $G[A,B]$ and $\Pi=V(G/\mathcal{B}_i)$ with $i \in[r]$,
    we deduce that 
    with high probability, for each subset $S \subseteq A$ with $\abs{S} \in \{1,2\}$ and $i \in[r]$,
    \begin{linenomath*}\[ 
        \rdeg_{G[A\cup B]/(\{S\} \cup \mathcal{B}_i)}(S)\le\abs{S} pqm+n^{1/2+\varepsilon/2}.
    \]\end{linenomath*}
    
    \medskip
    
    (C)
    By the union bound and Lemma~\ref{lem:exposingA} applied to $G[A]$ and $\Pi=\mathcal{A}_i$ with $i\in[s]$, 
    we conclude that with high probability, for every $i,j \in[s]$ with $i \ge j$,
    \begin{linenomath*}\[ 
        \rdeg_{G[A]/\mathcal{A}_i}(\{u_j,v_j\})
        \le2pq n^{1-\delta}-2p^2q^2 i+(n^{1-\delta})^{(1+\delta)/2}
        \le2pq n^{1-\delta}+n^{1/2}.
    \]\end{linenomath*}
    In particular, we note that
    \begin{linenomath*}\[ 
        \rdeg_{G[A]/\mathcal{A}_s}(\{u_j,v_j\})
        \le2pq n^{1-\delta}-2p^2q^2 s+n^{1/2}.
    \]\end{linenomath*}
    
    \medskip
    
    (D)
    By Lemma~\ref{lem:exposingBA} applied to $G[A,B]$ and $\Pi_B \in \{\mathcal{B}_{\lfloor \alpha n \rfloor}, \mathcal{B}_{m-\lfloor \alpha n \rfloor} \}$, with high probability, there are subsets $L_1$ and $L_2$ of $B$ with $\max\{|L_1|, |L_2|\}<\sqrt{n}$ such that for every $S \in \mathcal{B}_{i}$ with $i \in \{\lfloor \alpha n \rfloor, m-\lfloor \alpha n \rfloor\}$
    and $S\cap L_j=\emptyset$ for each $j\in\{1,2\}$, we have 
    \begin{linenomath*}\[
        \rdeg_{G[A,B] / (\mathcal{A}_s \cup \mathcal{B}_i)} (S)
        \le
        (1-p^{\abs{S}}-q^{\abs{S}})(n^{1-\delta}-2s)+(1-p^{2\abs{S}}-q^{2\abs{S}}) s+n^{1/2+\varepsilon/2}.
    \]\end{linenomath*}
    Let $L := L_1 \cup L_2$.
    Then $|L| <2 \sqrt{n}$.
    Observe that $G[A,B]$ have no edges between vertices in $B$
    and so if $S\in \mathcal{B}_i \cap \mathcal{B}_j$, then 
    $\rdeg_{G[A,B]/(\mathcal A_s\cup \mathcal{B}_i)}(S)=\rdeg_{G[A,B]/(\mathcal A_s\cup \mathcal{B}_j)}(S)$.
    Note that for every $j\in[r]$ and every $S \in \mathcal{B}_j$, if $\abs{S}=2$, then 
    $S\in \mathcal{B}_{\lfloor \alpha n \rfloor}$ 
    and if $\abs{S}=3$, then $S\in\mathcal{B}_{m-\lfloor\alpha n\rfloor}$.
    Therefore, with high probability, for every $j\in[r]$ and $S \in \mathcal{B}_j$ with $\abs{S} \in \{2,3\}$ and $S\cap L=\emptyset$,
    \begin{linenomath*}\[
        \rdeg_{G[A,B] / (\mathcal{A}_s \cup \mathcal{B}_j)} (S)
        \le 
        (1-p^{\abs{S}}-q^{\abs{S}})(n^{1-\delta}-2s)+(1-p^{2\abs{S}}-q^{2\abs{S}}) s+n^{1/2+\varepsilon/2}.
    \]\end{linenomath*}
    
    \medskip
    
    (E)
    By the union bound and Lemma~\ref{lem:single red} applied to $G[B]$ and $\Pi_{i,u}=\{S \in \mathcal{B}_i : u \notin S\}$ with $u\in B$ and $i\in[r]$, 
    with high probability, for every $u\in B$ and $i\in[r]$, we have 
    \begin{linenomath*}\[
        \rdeg_{G[B]/\{S \in \mathcal{B}_i : u \notin S\}}(\{u\})
        \le pqn+n^{1/2+\varepsilon}.
    \]\end{linenomath*}
    
    \medskip
    
    (F)
    By Lemma~\ref{lem:exposingB} applied to $G[B]$, there exists $c'>0$
    depending only on $p$ 
    such that with high probability, for each $i\in[r]$ and $S \in \mathcal{B}_i$ with $\abs{S}>1$, we have 
    \begin{linenomath*}\[
        \rdeg_{G[B]/\mathcal{B}_i} (S)
        \le
        \begin{cases}
            2pqm+n^{1/2+\varepsilon/2} & \text{if } \abs{S}=2, \\
            2pqn-3pqn^{1-\delta}+n^{1/2+\varepsilon/2} & \text{if } \abs{S}=3, \\
            (2pq-c')n & \text{if } \abs{S}=4.
        \end{cases}
    \]\end{linenomath*}
    \medskip
    To complete the proof, we apply Lemma~\ref{lem:nonrandom} to
    \begin{linenomath*}\begin{align*}
        \ell        &= 2\sqrt{n}, \\
        \lambda_1   &= 2pqm-c \sqrt{m \ln m}, \\
        \lambda_2   &= 2pqn^{1-\delta}+n^{1/2},\\
        \lambda_2'  &= 2pqn^{1-\delta}-2p^2q^2s+n^{1/2},\\
        \lambda_3   &= pqn+n^{1/2+\varepsilon},\\
        \mu_1       &= pqm+n^{1/2+\varepsilon/2},\\
        \mu_2       &= 2pqm+n^{1/2+\varepsilon/2},\\
        \rho_2      &= 2pq(n^{1-\delta}-2s)+(1-p^4-q^4)s+n^{1/2+\varepsilon/2},\\
        \rho_3      &= 3pq(n^{1-\delta}-2s)+(1-p^6-q^6)s+n^{1/2+\varepsilon/2},\\
        \nu_2       &= 2pqm+n^{1/2+\varepsilon/2},\\
        \nu_3       &= 2pqn-3pqn^{1-\delta}+n^{1/2+\varepsilon/2}, \\
        \nu_4       &= (2pq-c')n.
    \end{align*}\end{linenomath*}
    Then it suffices to show that the following eight numbers
    \begin{linenomath*}\begin{align*}
        \begin{array}{llll}
            \textbf{(a)}\; n-s-r+3\ell,
            &\textbf{(b)}\;s+\mu_1,
            &\textbf{(c)}\;\lambda_1+\lambda_2,
            &\textbf{(d)}\;\lambda_2'+\mu_2+3\ell,\\
            \textbf{(e)}\;s+\lambda_3,
            &\textbf{(f)}\;\rho_2+\nu_2+3\ell,
            &\textbf{(g)}\;\rho_3+\nu_3+3\ell,
            &\textbf{(h)}\;n-m-s+\nu_4+3\ell
        \end{array}
    \end{align*}\end{linenomath*}
    are less than $2pqn-c\sqrt{n \ln n}+o(\sqrt{n \ln n})$.
    
    \leqnomode 

    Recall that $\ell=2\sqrt{n}$.
    Hence by Lemma~\ref{lem:ab}\ref{lem:ab 1-alpha} we have 
    \begin{linenomath*}\[ 
        \ltag{(a)}n-s-\lfloor (m+\lfloor \alpha n \rfloor)/2 \rfloor+3\ell
        =\frac{1-\alpha}{2}n+\frac{1}{2}n^{1-\delta}-n^{1/2+\varepsilon}+O(\sqrt{n})<(2pq-c'')n+o(n)
    \]\end{linenomath*}
    for some $c''>0$.
    Other bounds can be computed as follows.
    \begin{linenomath*}\begin{align*}
        \ltag{(b)} s+\mu_1
        &=\lfloor n^{1/2+\varepsilon}\rfloor+(pqm+n^{1/2+\varepsilon})
        \le pqn+o(n).\\
        \ltag{(c)}\lambda_1+\lambda_2
        &=(2pqm-c\sqrt{m \ln m})+(2pqn^{1-\delta}+n^{1/2})\\
        &=2pqn-c\sqrt{n \ln n}+o(\sqrt{n \ln n}).\\
        \ltag{(d)} \lambda_2'+\mu_2+3\ell
        &=(2pqn^{1-\delta}-2p^2q^2 \lfloor n^{1/2+\varepsilon}\rfloor+n^{1/2})
        +(2pqm+n^{1/2+\varepsilon/2})+6\sqrt{n}\\
        &=2pqn-2p^2q^2 n^{1/2+\varepsilon}+o(n^{1/2+\varepsilon})
        \le2pqn -c\sqrt{n\ln n}+o(\sqrt{n\ln n}).\\
        \ltag{(e)} s+\lambda_3
        &=\lfloor n^{1/2+\varepsilon}\rfloor+(pqn+n^{1/2+\varepsilon})=pqn+o(n).
    \end{align*}\end{linenomath*}
    By Lemma~\ref{lem:pq}\ref{item:pq-2},
    \begin{linenomath*}\begin{align*}
       \ltag{(f)} \rho_2+\nu_2+3\ell
        &=(2pq(n^{1-\delta}-2s)+(1-p^4-q^4)s+n^{1/2+\varepsilon/2})\\
        &\quad+(2pqm+n^{1/2+\varepsilon/2})+6\sqrt{n} \\
        &\le2pqn-2p^2q^2 n^{1/2+\varepsilon}+ o(n^{1/2+\varepsilon})
    \end{align*}\end{linenomath*}
    and
    \begin{linenomath*}\begin{align*}
        \ltag{(g)}\rho_3+\nu_3+3\ell
        &=(3pq(n^{1-\delta}-2s)+(1-p^6-q^6)s+n^{1/2+\varepsilon/2})\\
        &\quad+(2pqn-3pqn^{1-\delta}+n^{1/2+\varepsilon/2})+6\sqrt{n} \\
        &=2pqn-34p^3 q^3 n^{1/2+\varepsilon}+o(n^{1/2+\varepsilon}).
    \end{align*}\end{linenomath*}
    Finally,
    \begin{linenomath*}\[
       \ltag{(h)} n-m-s+\nu_4+3\ell
        =n^{1-\delta}-s+(2pq-c')n+6\sqrt{n}\le(2pq-c')n+o(n).
    \]\end{linenomath*}
    This completes the proof.
\end{proof}
\reqnomode

\section{Lower bounds for the twin-width of random graphs}\label{sec:lower}

We provide several lower bounds for the twin-width of random graphs $G(n,p)$ according to whether the random graphs are dense or sparse.
In Subsection~\ref{subsec:dense}, we prove the lower bound of Theorem~\ref{thm:main}\ref{item:main1}.
In Subsection~\ref{subsec:less than p star}, we prove Theorem~\ref{thm:main}\ref{item:main2}, which gives a better lower bound than Theorem~\ref{thm:main}\ref{item:main1} when $p<p^*$.
In Subsection~\ref{subsec:sparse}, we deal with the case when $p=o(1)$.

\subsection{When {{\boldmath$p$}} is a constant larger than {\boldmath$p^*$}} \label{subsec:dense}

As mentioned before, for a random graph $G := G(n,p)$ and distinct vertices $u$ and $v$, the expected size of $(N_G(u)\triangle N_G(v))\setminus \{u,v\}$ is $2p(1-p)(n-2)$.
This was the main idea used in the proof of Theorem~\ref{thm:ahko} in \cite{AHKO2021}.
However, it turns out that it is possible to improve the lower bound by an additive factor of $\Theta\left(\sqrt{n\log n}\right)$ by analyzing the first $\sqrt{n}$ contractions carefully.
This matches the upper bound up to the second-order term for $p\in(p^*,\frac{1}{2}]$.

The following simple lemma is a useful tool for computing lower bounds on twin-width.

\begin{lemma}\label{lem:tinygoodpairs}
    Let $b$ and $d$ be positive reals, and $H$ be a graph with $\abs{V(H)}\ge b+2$.
    If $H$ has fewer than $b$ pairs $\{u,v\}$ of distinct vertices with $r_H(u,v)<b+d$, then $\tww(H)>d$.
\end{lemma}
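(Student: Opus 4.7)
The plan is to argue by contradiction: assume $\tww(H)\le d$ and fix a $d$-contraction sequence $H=H_0,H_1,\ldots,H_{|V(H)|-1}$. For each step $i$, let $X_i,Y_i\in V(H_{i-1})$ be the contracted clusters (viewed as subsets of $V(H)$) and pick arbitrary representatives $u_i\in X_i$ and $v_i\in Y_i$. I will show that the pairs $\{u_i,v_i\}$ are pairwise distinct and satisfy $r_H(u_i,v_i)\le d+(i-1)$, so that taking $i$ from $1$ to $\lceil b\rceil$ produces at least $b$ distinct pairs with $r_H<b+d$, contradicting the hypothesis.

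Pairwise distinctness is immediate: once $u_i$ and $v_i$ are merged at step $i$, they lie in the same cluster of every $H_{j-1}$ with $j>i$, so they cannot both appear as representatives of the two distinct clusters $X_j,Y_j$.

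For the main inequality, partition $w\in V(H)\setminus\{u_i,v_i\}$ by the cluster of $H_{i-1}$ containing $w$. Vertices inside $X_i\cup Y_i$ contribute at most $(|X_i|-1)+(|Y_i|-1)$ to $r_H(u_i,v_i)$. For any other cluster $C$, the crucial dichotomy is that if the trigraph edges $X_iC$ and $Y_iC$ are both black in $H_{i-1}$ then every $c\in C$ is adjacent in $H$ to both $u_i$ and $v_i$, and similarly if both are non-edges then no $c\in C$ is adjacent to either, so in either case $C$ contributes $0$; otherwise $C$ contributes at most $|C|$ but is counted by $r_{H_{i-1}}(X_i,Y_i)$. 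Writing $\chi_C\in\{0,1\}$ for the indicator of this last case, $\sum_{C}\chi_C\le d$ and the total contribution from clusters other than $X_i,Y_i$ is at most $\sum_C|C|\chi_C=\sum_C\chi_C+\sum_C(|C|-1)\chi_C$. Since $(|C|-1)\chi_C=0$ for singleton $C$, a short bookkeeping step that combines this with the $(|X_i|-1)+(|Y_i|-1)$ contribution from within $X_i\cup Y_i$ yields $r_H(u_i,v_i)\le d+n_s-t$, where $n_s$ and $t$ denote the total size and the number of non-singleton clusters in $H_{i-1}$.

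To close, I will observe that each contraction---singleton-singleton, singleton-nonsingleton, or nonsingleton-nonsingleton---increases $n_s-t$ by exactly $1$, so $n_s-t=i-1$ after $i-1$ steps, giving $r_H(u_i,v_i)\le d+(i-1)<d+b$ for every $i\le\lceil b\rceil$ (the hypothesis $|V(H)|\ge b+2$ ensures that this many contractions are available). The main technical obstacle is extracting the sharp bound $d+(i-1)$ rather than a naive $d+2(i-1)$: this requires exploiting the all-or-nothing dichotomy for non-singleton clusters and using the identity $|C|=1+(|C|-1)$ to trade cluster-count against cluster-size in the final estimate.
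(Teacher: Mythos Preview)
Your proposal is correct. The overall architecture matches the paper's proof: assume a $d$-contraction sequence exists, look at the first $\lceil b\rceil$ contractions, identify each contracted pair with a pair of vertices of $H$, and show these pairs are distinct and satisfy $r_H<b+d$.

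The one place where your argument differs from the paper is in how you derive $r_H(u_i,v_i)\le d+(i-1)$. The paper uses a naming convention (the new vertex inherits the smaller label) so that the contracted pair at step $s$ is literally a pair $\{i,j\}\subseteq V(H)$, and then invokes the one-line recursive inequality $r_{H_{s+1}}(i,j)\ge r_{H_s}(i,j)-1$ (valid for any two surviving vertices, since a single contraction merges at most two potential red neighbours into one). Iterating gives $r_H(i,j)\le r_{H_s}(i,j)+(s-1)\le d+(s-1)$ directly. Your route instead picks arbitrary representatives and performs an explicit cluster-by-cluster count, introducing the invariant $n_s-t$ and checking it increases by exactly one per contraction. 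Both yield the same sharp bound; the paper's version is shorter, while yours makes the combinatorics behind the ``$-1$ per step'' visible without relying on the labelling trick.
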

\begin{proof}
    We may assume that $V(H)=[n]$ for some integer $n$.
    Suppose for contradiction that there exists a $d$-contraction sequence $\sigma$ from $H_1:=H$ to the $1$-vertex graph $H_n$.
    For each $s\in[n-1]$, if $H_{s+1}$ is obtained from $H_s$ by contracting $\{i,j\}$ for some $1\le i<j\le n$, then we call the new vertex $i$.
    By definition, for any $s\in[n-1]$ and any distinct $i$ and $j$ in $V(H_{s+1})$, we have that $r_{H_{s+1}}(i,j)\ge r_{H_s}(i,j)-1$.
    For every positive integer $s$ less than $b+1$, we can find a pair $\{i,j\}$ with $1\le i<j\le n$ such that $H_{s+1}=H_s/\{i,j\}$ and
    \begin{linenomath*}\[
        r_H(i,j)\le r_{H_s}(i,j)+(s-1)=\rdeg_{H_{s+1}}(i)+(s-1)<b+d.
    \]\end{linenomath*}
    Thus, we have found $\lceil b\rceil$ pairs $\{i,j\}$ with $r_H(i,j)<b+d$, a contradiction.
\end{proof}

Theorem~\ref{thm:main}\ref{item:main1} directly follows from Theorem~\ref{thm:upper2} and the following theorem.

\begin{theorem}\label{thm:lower}
    For $p \in(0,1)$ and $q:=1-p$, with high probability,
    \begin{linenomath*}\[
        \tww(G(n,p))\ge2pqn-\sqrt{6pq(1-2pq)n\ln n}-o(\sqrt{n\ln n}).
    \]\end{linenomath*}
\end{theorem}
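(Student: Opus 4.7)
The plan is to apply Lemma~\ref{lem:tinygoodpairs}. The starting observation is that for any two distinct vertices $u,v$ of $G := G(n,p)$, each other vertex $w$ belongs to $N_G(u) \triangle N_G(v)$ independently with probability exactly $2pq$, so $r_G(u,v)$ has the distribution $\mathcal{B}(n-2, 2pq)$ with mean $2pq(n-2)$. Thus the problem reduces to counting how many pairs can achieve an abnormally small symmetric difference.

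With this in mind, I would set $b := 2\sqrt{n}$ and $d := 2pqn - \sqrt{6pq(1-2pq) n \ln n} - 3\sqrt{n}$, so that $b+d$ lies at distance $\sqrt{6pq(1-2pq) n \ln n} + \sqrt{n} - O(1)$ below the mean of $r_G(u,v)$. Applying Lemma~\ref{lem:binomUpper} with Bernoulli parameter $2pq$, whose cubic correction is $O((\ln n)^{3/2}/\sqrt{n}) = o(1)$ and whose hypothesis $\varepsilon \le 3(2pq)/10$ is eventually satisfied, a brief expansion of $(\mu-(b+d))^2/(n-2)$ shows that the relevant exponent $(n-2)\varepsilon_n^2/(4pq(1-2pq))$ equals $\tfrac{3}{2}\ln n + \Omega(\sqrt{\ln n})$. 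Hence
\[
    \Pr[r_G(u,v) \le b+d] \le n^{-3/2} \exp\bigl(-\Omega(\sqrt{\ln n})\bigr).
\]

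By linearity of expectation, the expected number of pairs $\{u,v\}$ with $r_G(u,v) < b+d$ is at most $\binom{n}{2} n^{-3/2} \exp(-\Omega(\sqrt{\ln n})) = o(\sqrt{n})$, so Markov's inequality yields that with high probability fewer than $b = 2\sqrt{n}$ such pairs exist. Lemma~\ref{lem:tinygoodpairs} then gives $\tww(G) > d$, which completes the proof since $3\sqrt{n} = o(\sqrt{n \ln n})$.

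The main delicate point is that a plain Chernoff bound (e.g.~Lemma~\ref{lem:Chernoffbound}) is not sharp enough to recover the precise coefficient $\sqrt{6pq(1-2pq)}$ of the second-order term; the sharper exponent $-n\varepsilon^2/(2p(1-p))$ in Lemma~\ref{lem:binomUpper} is essential. A small buffer of order $\sqrt{n}$ between $b+d$ and the target lower bound is also needed, because without it each per-pair probability would be just barely $n^{-3/2}$, the expected count would be $\Theta(\sqrt{n})$, and Markov's inequality against $b = 2\sqrt{n}$ would no longer go through.
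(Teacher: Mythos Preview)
Your proposal is correct and follows essentially the same approach as the paper: identify $r_G(u,v)\sim\mathcal{B}(n-2,2pq)$, use the sharp tail bound Lemma~\ref{lem:binomUpper} to show that the expected number of pairs with small symmetric difference is $o(b)$, apply Markov's inequality, and conclude via Lemma~\ref{lem:tinygoodpairs}. The only cosmetic difference is in the choice of buffer: the paper takes $b=g(n)$ for an arbitrary function with $\sqrt{n}\ll g(n)\ll\sqrt{n\ln n}$ and places the threshold exactly at $\mu-\sqrt{6pq(1-2pq)(n-2)\ln n}$ (getting per-pair probability $\le en^{-3/2}$), whereas you fix $b=2\sqrt{n}$ and lower the threshold by an extra $\sqrt{n}$ to squeeze an additional $\exp(-\Omega(\sqrt{\ln n}))$ out of the exponent.
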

\begin{proof}
    Let $G:=G(n,p)$.
    Let $g(n)$ be any positive function on $\mathbb R$ such that 
    $g(n)/\sqrt{n}\to\infty$ and $g(n)/\sqrt{n\ln n}\to 0$ as $n\to \infty$.
    Let 
    \begin{linenomath*}\[
        f(n):=2pq(n-2)-\sqrt{6pq(1-2pq)(n-2)\ln n}-g(n).
    \]\end{linenomath*}
    Let $X$ be the number of pairs $\{x,y\}$ of distinct vertices with $r_G(x,y)<f(n)+g(n)$.
    We claim that
    \begin{linenomath*}\[
        \Pr\left[X \ge g(n)\right]\to 0
    \]\end{linenomath*}
    as $n\to\infty$.
    Let $u$, $v$ be distinct vertices.
    By Markov's inequality and the linearity of expectation,
    \begin{linenomath*}\[ 
        g(n)\cdot\Pr\left[X
        \ge g(n)\right]
        \le\E\left[X\right]
        =\binom{n}{2}\cdot\Pr\left[r_G(u,v)<f(n)+g(n)\right].
    \]\end{linenomath*}
    Note that
    \begin{linenomath*}\begin{align*}
        &\Pr\left[r_G(u,v)<f(n)+ g(n)\right]\\
        &=\Pr\left[\mathcal{B}(n-2, 2pq)<2pq(n-2)-\sqrt{6pq(1-2pq)(n-2)\ln n}\right]\\
        &=\Pr\left[\mathcal{B}(n-2,2pq)<2pq(n-2)-\left(\sqrt{\frac{6pq(1-2pq)\ln n}{n-2}}\right)(n-2)\right].
    \end{align*}\end{linenomath*}
    Let $\varepsilon(n):=\sqrt{\frac{6pq(1-2pq)\ln n}{n-2}}$.
    Note that $\varepsilon(n)\le3\cdot2pq/10$
    for all sufficiently large~$n$ because $\varepsilon(n)\to0$ as $n\to\infty$.
    Furthermore, $(n-2)\varepsilon(n)^3\le 8p^2q^2(1-2pq)^2$ for all sufficiently large $n$
    as $(n-2)\varepsilon(n)^3\to0$ as $n\to\infty$.
    Thus, by Lemma~\ref{lem:binomUpper}, for all sufficiently large $n$,
    
    \begin{linenomath*}\begin{align*}
        \Pr\left[r_G(u,v)<f(n)+g(n)\right]
        &\le\exp\left(-\frac{(n-2)\varepsilon(n)^2}{4pq(1-2pq)}+\frac{(n-2)\varepsilon(n)^3}{8p^2q^2(1-2pq)^2}\right)\\
        &\le\exp\left(-\frac{3}{2}\ln n+1\right)=en^{-3/2}.
    \end{align*}\end{linenomath*}
    Therefore,
    \begin{linenomath*}\begin{align*}
        \Pr\left[X\ge g(n)\right]
        &\le\frac{1}{g(n)}\binom{n}{2}\cdot\Pr[r_G(u,v)<f(n)+g(n)]\\
        &\le\frac{1}{g(n)}\binom{n}{2} en^{-3/2} 
       \to0\text{ as }n\to\infty.
    \end{align*}\end{linenomath*}
    Hence, by Lemma~\ref{lem:tinygoodpairs} applied to $b=g(n)$ and $d=f(n)$, 
    with high probability,
    \begin{linenomath*}\[
        \tww(G)>f(n)=2pqn-\sqrt{6pq(1-2pq)(n-2)\ln n}-g(n)-4pq,
    \]\end{linenomath*}
    which completes the proof.
\end{proof}

\subsection{When {{\boldmath$p$}} is a constant smaller than {\boldmath$p^*$}}\label{subsec:less than p star}

In this subsection, we prove Theorem~\ref{thm:main}\ref{item:main2}, which for convenience we restate as Theorem~\ref{thm:main2} below.
We begin by summarizing our strategy.

\medskip
\noindent\textbf{Overview.}
For an arbitrary partition sequence $\Pi_1,\Pi_2,\ldots,\Pi_n$ of~$[n]$, we first consider the minimum integer~$t$ such that 
$\abs{\bigcup_{X\in\Pi_t,\abs{X}\ge3}X} \ge \sqrt{n}$. 
At this point, we observe that if the expected width of~$\Pi_t$ with respect to $G(n,p)$ is not much larger than $2pqn$, then the number of parts of size~$2$ in~$\Pi_t$ is greater than the number of parts of size~$1$ in~$\Pi_t$ by some margin.
The key observation is that for every $t'>t$, this implies an upper bound on the number of parts of~$\Pi_{t'}$ of size~$3$.
We next consider the minimum integer $t'$ such that $\abs{\bigcup_{X\in\Pi_{t'},\abs{X}\ge4}X}\ge2\sqrt{n}$.
We are now able to prove a lower bound on the expected width of $\Pi_{t'}$ with respect to $G(n,p)$ in terms of the number of parts of size~$3$ in~$\Pi_{t'}$, which allows us to complete the proof.

\begin{theorem}\label{thm:main2}
    For $p \in(0,p^*)$ and $q := 1-p$, there exists a constant $c>0$ such that with high probability,
    \begin{linenomath*}\[ 
        \tww(G(n,p))>(2pq+c)n.
    \]\end{linenomath*}
\end{theorem}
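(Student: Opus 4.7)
The plan is to argue by contradiction: for a suitable constant $c := c(p) > 0$, I will show that for any fixed contraction sequence $\sigma = G_1(=G),\ldots,G_n$ of $G := G(n,p)$, the event $\{\DeltaR(G_s) \le (2pq+c)n\text{ for every }s\}$ has very small probability over $G$; the theorem then follows by union-bounding over $\sigma$. Write $C_{s,i}$ as in Observation~\ref{obs}, set $V_i^s := iC_{s,i}$, and define the critical steps
\[
  t := \min\Bigl\{s : \textstyle\sum_{i\ge 3} V_i^s \ge \sqrt n\Bigr\},
  \quad
  t' := \min\Bigl\{s : \textstyle\sum_{i\ge 4} V_i^s \ge 2\sqrt n\Bigr\}.
\]
Since each contraction changes these sums by only $O(1)$, we have $\sum_{i \ge 3} V_i^t \le \sqrt n + O(1)$ and $\sum_{i \ge 4} V_i^{t'} \le 2\sqrt n + O(1)$.

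At time $t$, some block $S$ of $G_t$ satisfies $\abs S \ge 3$; $\rdeg_{G_t}(S)$ is a sum of independent Bernoulli indicators across the other blocks, and using $V_1^t + V_2^t = n - O(\sqrt n)$ together with $f(k) := 1-p^k-q^k$, a direct expectation computation gives
\[
  \E[\rdeg_{G_t}(S)] = f(\abs S)\,n - \bigl(2f(\abs S) - f(2\abs S)\bigr) C_{t,2} + O(\sqrt n),
\]
where the coefficient of $C_{t,2}$ is strictly positive by Lemma~\ref{lem:pq}\ref{item:pq-1}. Combining $\rdeg_{G_t}(S) \le (2pq+c)n$ with the Chernoff bound (Lemma~\ref{lem:Chernoffbound}) forces, with very high probability,
\[
  C_{t,2} \ge \alpha(p)\,n - O(c)\,n - O(\sqrt{n\ln n}),
\]
with $\alpha(p) = 1/(pq(9-2pq))$ as in \eqref{eq:alpha} (the worst case being $\abs S = 3$; the case $\abs S \ge 4$ yields a strictly stronger lower bound on $C_{t,2}$ by a short computation using $1 - 3pq + p^2q^2 > 0$). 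Since $\alpha(p) > 1/3$ for $p \in (0,1/2]$ by Lemma~\ref{lem:ab}, this translates into $V_1^t = C_{t,1} \le (1 - 2\alpha(p) + O(c))\,n + O(\sqrt{n\ln n})$.

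I next propagate this bound to time $t'$. Every size-$3$ block of $G_{t'}$ either was already a block of $G_t$ (at most $V_{\ge 3}^t/3 = O(\sqrt n)$ of them) or was formed after time $t$ by contracting a pair with a singleton; the latter type of contraction consumes a distinct element of $V_1^t$, since $V_1^s$ is non-increasing and only decreases when a singleton is involved in a contraction. Hence at most $V_1^t$ such contractions can occur, yielding
\[
  V_3^{t'} \le 3V_1^t + O(\sqrt n) \le (3 - 6\alpha(p) + O(c))\,n + O(\sqrt{n\ln n}).
\]
Moreover $V_{\ge 2}^s$ is monotone non-decreasing along $\sigma$, so $V_2^{t'} + V_3^{t'} = V_{\ge 2}^{t'} - V_{\ge 4}^{t'} \ge V_2^t - 2\sqrt n - O(1) \ge 2\alpha(p)\,n - O(\sqrt{n\ln n})$. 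Now fix a size-$4$ block $T'$ in $G_{t'}$ (at least $\lfloor\sqrt n/2\rfloor$ such blocks exist) and set $g(k) := f(4) - f(4k)/k$, so that $g(2),g(3) > 0$ and $g(3) > g(2)$ by Lemma~\ref{lem:pq}\ref{item:pq-1}; the same method gives $\E[\rdeg_{G_{t'}}(T')] = f(4)\,n - g(2)\,V_2^{t'} - g(3)\,V_3^{t'} + O(\sqrt n)$.

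To finish, I maximise $g(2)V_2^{t'} + g(3)V_3^{t'}$ subject to $0 \le V_3^{t'} \le (3 - 6\alpha(p))n$ and $V_2^{t'} + V_3^{t'} \le n$; since $g(3) > g(2)$, the optimum saturates the $V_3^{t'}$-bound and yields
\[
  g(2)V_2^{t'} + g(3)V_3^{t'} \le \bigl[(6\alpha(p) - 2)\,g(2) + (3 - 6\alpha(p))\,g(3)\bigr] n + O(\sqrt{n\ln n}).
\]
A short algebraic manipulation rewrites the bracket as $(f(4) - 2pq) - (3f(8) - 2f(12))\,(\alpha(p) - \beta(p))$, and since $3f(8) - 2f(12) > 0$ by Lemma~\ref{lem:pq}\ref{item:pq-1}, this is strictly less than $f(4) - 2pq = 2pq(1-pq)$ exactly when $\alpha(p) > \beta(p)$; by Lemma~\ref{lem:ab}\ref{lem:ab long ineq} this is precisely the hypothesis $p < p^*$. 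Choosing $c$ smaller than half the strict gap and applying Chernoff once more gives $\rdeg_{G_{t'}}(T') > (2pq+c)n$ with very high probability, the desired contradiction. The main technical hurdle will be to make this argument uniform in $\sigma$: since $G(n,p)$ is invariant under vertex permutations, the red-degree probabilities above depend on $\sigma$ only through the block-size profiles at times $t$ and $t'$, of which there are only $\exp(O(\sqrt n))$ many, so combined with the Chernoff tails of order $\exp(-\Omega(n))$ a union bound over profiles and over $t,t' \in [n]$ should be enough to conclude.
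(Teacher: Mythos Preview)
Your overall strategy mirrors the paper's proof closely: the same stopping times $t$ and $t'$, the same implicit case split on whether $C_{t,2}$ is below or above roughly $\alpha(p)n$, and your propagation bound $C_{t',3}\le C_{t,1}+O(\sqrt n)$ is in fact a cleaner replacement for the paper's $\Lambda_{i,j}$ bookkeeping; both lead to the same inequality $\E[\rdeg]\ge [f_8(3\alpha-1)+f_{12}(1-2\alpha)]\,n+O(\cdot)$, which exceeds $2pq\,n$ precisely when $p<p^*$ by Lemma~\ref{lem:ab}\ref{lem:ab long ineq}.

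The genuine gap is the union bound. You propose to take a union over ``block-size profiles'' at times $t$ and $t'$, observing that by vertex-permutation invariance the Chernoff tail $\Pr[\rdeg_{G_t}(S)\le(2pq+c)n]$ depends only on the profile. That observation is correct, but it controls only the \emph{value} of each $\Pr[A_\sigma]$, not the number of distinct events $A_\sigma$. Two sequences with the same profile still produce different partitions of $[n]$, hence different events in the probability space of $G(n,p)$, and the union must range over all of them. There are $\binom{n}{2}^{n-1}=\exp(\Theta(n\ln n))$ contraction sequences, and a single witness block gives a Chernoff tail of only $\exp(-\Omega(n))$; the product $\exp(\Theta(n\ln n))\cdot\exp(-\Omega(n))$ does not tend to~$0$.

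The missing idea is to use many independent witnesses. At time $t$ (respectively~$t'$) the $\ge\sqrt n$ mass in blocks of size $\ge3$ (respectively $\ge4$) can be partitioned into $\Theta(\sqrt n)$ pairwise disjoint triples (quadruples); the paper then considers the bipartite trigraph between these pieces and the blocks of size $\le2$ (respectively $\le3$). The red-degrees of the disjoint pieces in this auxiliary trigraph are mutually independent, so the tail probability becomes $\exp(-\Omega(n))^{\Theta(\sqrt n)}=\exp(-\Omega(n^{3/2}))$, which does beat the $\exp(O(n\ln n))$ coming from the union over all $\sigma$. Your remark that ``at least $\lfloor\sqrt n/2\rfloor$ size-$4$ blocks exist'' is not literally true for the same reason (all the mass could sit in a single block), and is fixed by the same partitioning trick. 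Once you incorporate these $\Theta(\sqrt n)$ disjoint witnesses, the rest of your argument goes through essentially as written.
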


\begin{proof}
    Let
    $\alpha := \alpha(p) >0$.
    We choose $\varepsilon$, $\delta$, and $c$ as follows:
    \begin{linenomath*}\begin{align*}
        2\varepsilon
        &:= (1-p^8-q^8)(3\alpha-1)+(1-p^{12}-q^{12})(1-2\alpha)-2pq, \\
        \delta
        &:=\min\left\{0.99\alpha,\frac{\varepsilon}{3(1-p^8-q^8)-2(1-p^{12}-q^{12})}\right\},\\
        1.01c
        &:= \min\{ \varepsilon, (2(1-p^3-q^3)-(1-p^6-q^6))\delta\}.
    \end{align*}\end{linenomath*}
    By Lemma~\ref{lem:ab}\ref{lem:ab long ineq}, $\varepsilon$ is positive.
    This together with Lemma~\ref{lem:pq}\ref{item:pq-1} implies that $\delta$ and $c$ are also positive.

    We claim that for an arbitrary partition sequence $\Pi_1,\ldots,\Pi_n$ of~$[n]$ and $G:=G(n,p)$, 
    \begin{linenomath*}\begin{equation}
        \Pr[\text{$\DeltaR(G/\Pi_{s})\le(2pq+c)n$ for all $s \in[n]$}] \le\exp\left(-\frac{c^2n^{3/2}}{400000}\right).\label{eq:stars}
    \end{equation}\end{linenomath*}
    Before proving \eqref{eq:stars}, we first show that \eqref{eq:stars} implies Theorem~\ref{thm:main2}.
    Assume that \eqref{eq:stars} holds.
    Lemma~\ref{lem:number-partition-sequence} says that there are at most $\exp(2n\ln n)$ partition sequences of $[n]$.
    Thus, by the union bound, the probability that the width of some partition sequence of $[n]$ 
    with respect to $G(n,p)$ is at most $(2pq+c)n$ is at most $\exp(2n\ln n-\Omega(n^{3/2}))$, which goes to $0$ as $n\to\infty$.
    Thus, $\tww(G(n,p))>(2pq+c)n$ with high probability by Lemma~\ref{lem:partionsequence}.

    We now prove \eqref{eq:stars}.
    For each $s\in[n]$ and an integer $i$, we denote by~$C_{s,i}$ the number of parts of $\Pi_s$ of size $i$.
    Let $t\in[n]$ be the smallest integer such that $\sum_{i=3}^n(i \cdot C_{t,i})\ge\sqrt{n}$.
    We consider two cases by comparing $C_{t,2}$ and $(\alpha-\delta)n$.

    Let~$X$ and~$Y$ be the two parts in $\Pi_{t-1}\setminus \Pi_t$.
    By the definition of~$t$, we have that
    \begin{linenomath*}\begin{align*}
        \sum_{i=3}^n(i\cdot C_{t,i})
        &\le\sum_{i=3}^n(i\cdot C_{t-1,i})+\min(2,\abs{X})+\min(2,\abs{Y})\\
        &\le\sum_{i=3}^n(i\cdot C_{t-1,i})+4 <\sqrt{n}+4.
    \end{align*}\end{linenomath*}
    By Observation~\ref{obs}, $\sum_{i=1}^n(i\cdot C_{t,i})=n$, so $C_{t,1}>n-2C_{t,2}-\sqrt{n}-4$.

    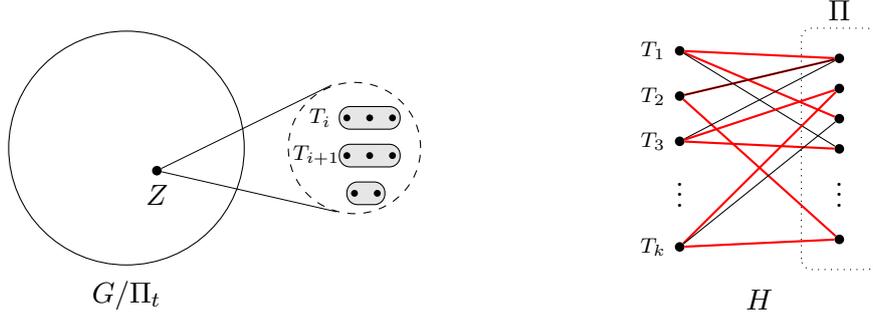
\begin{figure}
        \centering
        \begin{tikzpicture}
            \draw[color=black] (0,0) circle (1.55);
            \node[shape=circle,fill=black, scale=0.35] (v) at (0.4,-0.3) {};
            
            \draw[color=black, dashed] (3.0,0) circle (0.87);
            \draw[very thin, color=black] (v) -- (2.70,0.835);
            \draw[very thin, color=black] (v) -- (2.765,-0.845);
            \draw[rounded corners, fill=gray!20] (2.8,0.25) rectangle (3.6,0.55);
            \draw[rounded corners, fill=gray!20] (2.8,-0.25) rectangle (3.6,0.05);
            \draw[rounded corners, fill=gray!20] (2.9,-0.75) rectangle (3.4,-0.45);
            
            \node () at (0,-1.95) {$G/\Pi_t$};
            \node () at (0.4,-0.62) {$Z$};
            \node () at (2.55,0.4) {\scriptsize{$T_i$}};
            \node () at (2.50,-0.1) {\scriptsize{$T_{i+1}$}};
            \node[shape=circle,fill=black, scale=0.25] () at (2.9,0.4) {};
            \node[shape=circle,fill=black, scale=0.25] () at (3.2,0.4) {};
            \node[shape=circle,fill=black, scale=0.25] () at (3.5,0.4) {};
            \node[shape=circle,fill=black, scale=0.25] () at (2.9,-0.1) {};
            \node[shape=circle,fill=black, scale=0.25] () at (3.2,-0.1) {};
            \node[shape=circle,fill=black, scale=0.25] () at (3.5,-0.1) {};
            \node[shape=circle,fill=black, scale=0.25] () at (3.0,-0.6) {};
            \node[shape=circle,fill=black, scale=0.25] () at (3.3,-0.6) {};
        \end{tikzpicture}
        \hspace{2.5cm}
        \begin{tikzpicture}
            \node[shape=circle,fill=black, scale=0.35] (t1) at (0,2.9) {};
            \node () at (-0.35,2.9) {\scriptsize{$T_1$}};
            \node[shape=circle,fill=black, scale=0.35] (t2) at (0,2.3) {};
            \node () at (-0.35,2.3) {\scriptsize{$T_2$}};
            \node[shape=circle,fill=black, scale=0.35] (t3) at (0,1.7) {};
            \node () at (-0.35,1.7) {\scriptsize{$T_3$}};
            \node () at (0,1.1) {$\vdots$};
            \node[shape=circle,fill=black, scale=0.35] (tk) at (0,0.3) {};
            \node () at (-0.35,0.3) {\scriptsize{$T_k$}};
            
            \node () at (2.1,3.43) {$\Pi$};
            \draw[rounded corners, dotted] (1.6,0) rectangle (2.6,3.2);
            \node () at (1.05,-0.4) {$H$};
            
            \node[shape=circle,fill=black, scale=0.35] (p1) at (2.1,2.8) {};
            \node[shape=circle,fill=black, scale=0.35] (p2) at (2.1,2.4) {};
            \node[shape=circle,fill=black, scale=0.35] (p3) at (2.1,2.0) {};
            \node[shape=circle,fill=black, scale=0.35] (p4) at (2.1,1.6) {};
            \node () at (2.1,1.1) {$\vdots$};
            \node[shape=circle,fill=black, scale=0.35] (pk) at (2.1,0.4) {};
            
            \draw[red,thick] (t1) -- (p1);
            \draw[red,thick] (t1) -- (p3);
            \draw[black] (t1) -- (p4);
            \draw[red,thick] (t2) -- (p1);
            \draw[black] (t2) -- (p1);
            \draw[red,thick] (t2) -- (pk);
            \draw[black] (t3) -- (p1);
            \draw[red,thick] (t3) -- (p2);
            \draw[red,thick] (t3) -- (p4);
            \draw[red,thick] (tk) -- (p2);
            \draw[black] (tk) -- (p3);
            \draw[red,thick] (tk) -- (pk);
        \end{tikzpicture}
        \caption{The left illustrates the partition of~$Z$ into vertex sets of size~$3$ except one part, and the right is the bipartite trigraph~$H$ on bipartition $\{T_1,\ldots,T_k\}\cup\Pi$.}
        \label{fig:triplets}
    \end{figure}

    \medskip
    \noindent\textbf{Case I.} $C_{t,2}\le(\alpha-\delta)n$.

    For each part $Z$ of $\Pi_t$ of size at least~$3$, 
    we partition~$Z$ into $\left\lfloor\frac{\abs{Z}}{3}\right\rfloor$ parts of size $3$ and at most one part of smaller size; see Figure~\ref{fig:triplets}.
    Since $\sum_{i=3}^n(i\cdot C_{t,i})\ge\sqrt{n}$, we obtain at least $\frac{1}{5}\sqrt{n}$ pairwise disjoint sets $T_1,T_2,\ldots,T_k$ of size~$3$ such that each~$T_i$ is a subset of some part $Z\in\Pi_t$.
    We denote by~$V_i$ the part of~$\Pi_t$ such that $T_i \subseteq V_i$.
    Let~$\Pi$ be the set of all parts of~$\Pi_t$ of size at most~$2$, let $\mathcal{A}:=\bigcup_{S\in\Pi}S$, and let $\mathcal{B}:=\bigcup_{i=1}^kT_i$.
    Let $H:=G[\mathcal{A},\mathcal{B}]/(\Pi\cup\{T_i:i\in[k]\})$; see Figure~\ref{fig:triplets}.
    Since $C_{t,2}\le(\alpha-\delta)n$, by Lemma~\ref{lem:pq}\ref{item:pq-1} and the definitions of $\alpha$ and~$c$,
    we have for each $i\in[k]$ that
    \begin{linenomath*}\begin{align*}
        &\E[\rdeg_{H}(T_i)]\\
        &= (1-p^3-q^3) C_{t,1}+(1-p^6-q^6) C_{t,2}\\
        &> (1-p^3-q^3)(n-2C_{t,2})+(1-p^6-q^6)C_{t,2}-
        (1-p^3-q^3)(\sqrt{n}+4)\\
        &\ge3pqn-(2(1-p^3-q^3)-(1-p^6-q^6))(\alpha-\delta)n-3pq(\sqrt{n}+4)\\
        &=2pqn+(2(1-p^3-q^3)-(1-p^6-q^6))\delta n-3pq(\sqrt{n}+4)\\
        &\ge(2pq+1.01c)n -3pq(\sqrt{n}+4).
    \end{align*}\end{linenomath*}
    Observe that 
    $3pq(\sqrt{n}+4)\le 0.005cn$ for all sufficiently large~$n$, which implies that 
    $\E[\rdeg_{H}(T_i)]\ge(2pq+1.005c)n$.
    Obviously, $\rdeg_H(T_i)<n$.
    By Lemma~\ref{lem:Chernoffbound}\ref{cond:chernoff3},
    \begin{linenomath*}\begin{align*}
        \lefteqn{\Pr[\rdeg_{H}(T_i)\le(2pq+c)n]} \\
        &\le\Pr\left[\rdeg_{H}(T_i)\le E[\rdeg_{H}(T_i)]-\frac{cn}{200}\right]
        \le\exp\left(-\frac{(\frac{1}{200}cn)^2}{2\E[\rdeg_H(T_i)]}\right)
        \le\exp\left(-\frac{c^2 n}{2\cdot200^2}\right)
    \end{align*}\end{linenomath*}
    for all sufficiently large~$n$.
    Note that $\rdeg_{G/\Pi_t}(V_i)\ge\rdeg_{H}(T_i)$ and $k\ge\frac{1}{5}\sqrt{n}$.
    Therefore,
    \begin{linenomath*}\begin{align*}
        \Pr[\DeltaR(G/\Pi_t)\le(2pq+c)n]
        &\le\prod_{i=1}^k \Pr[\rdeg_{H}(T_i)\le(2pq+c)n]\\
        &\le\exp\left(-\frac{kc^2n}{2\cdot 200^2}\right)\le\exp\left(-\frac{c^2n^{3/2}}{400000}\right),
    \end{align*}\end{linenomath*}
    which implies \eqref{eq:stars}.

    \medskip
    \noindent\textbf{Case II.} $C_{t,2}>(\alpha-\delta)n$.
    
    Let $t'\in[n]$ be the smallest integer such that $\sum_{i=4}^n(i\cdot C_{t',i})\ge2\sqrt{n}$.
    Note that $t' \ge t$.
    Similar to the argument for $\sum_{i=3}^{n}i\cdot C_{t,i}$, we have that
    \begin{linenomath*}\[
        \sum_{i=4}^n(i\cdot C_{t',i})\le\sum_{i=4}^n (i \cdot C_{t'-1,i})+6<2\sqrt{n}+6.
    \]\end{linenomath*}
    For nonnegative integers $i$ and $j$, let $\Lambda_{i,j}$ be the number of parts $Z$ of $\Pi_{t'}$ such that there are distinct $i+j$ parts $X_1,\ldots,X_i,Y_1,\ldots,Y_j\in \Pi_t$ satisfying the following two conditions.    
    \begin{itemize}
        \item $Z=X_1\cup \ldots \cup X_i \cup Y_1\cup \cdots \cup Y_j$,
        \item $\abs{X_1}=\cdots=\abs{X_i}=1$ and $\abs{Y_1}=\cdots=\abs{Y_j}=2$.
    \end{itemize}
    For $i\in\{1,2,3\}$, 
    let $C_i^*$ be the number of parts of $\Pi_t$ of size $i$
    that is a subset of a part of size at least $4$ in $\Pi_{t'}$.
    Then
    \begin{linenomath*}\begin{align*}
        C_1^*+2C_2^*+3C_3^*&\le\sum_{i=4}^n (i\cdot C_{t',i})<2\sqrt{n}+6,\\ 
        C_{t',1} &=
        C_{t,1}-(\Lambda_{1,1}+3\Lambda_{3,0}+2\Lambda_{2,0})-C_1^*, \\
        C_{t',2} &=
        C_{t,2}-\Lambda_{1,1}+\Lambda_{2,0}-C_2^*, \\
        C_{t',3} &=
        C_{t,3}+\Lambda_{1,1}+\Lambda_{3,0}-C_3^*.
    \end{align*}\end{linenomath*}
    For each part $Z$ of $\Pi_{t'}$ of size at least $4$, we partition $Z$ into parts of size~$4$ possibly except one.
    Since $\sum_{i=4}^n(i\cdot C_{t',i})\ge2\sqrt{n}$, we have at least $\frac{2}{7}\sqrt{n}$ pairwise disjoint sets $Q_1,\ldots,Q_{k'}$ of size $4$ such that each $Q_i$ is contained in some part, say $W_i$, of $\Pi_{t'}$.
    Let $\Pi':=\{Z\in\Pi_{t'}:\abs{Z}\le3\}$, $\mathcal{A}':=\bigcup_{S\in\Pi'} S$, and $\mathcal{B}':=\bigcup_{i=1}^{k'} Q_i$.
    Let $H' := G[\mathcal{A}',\mathcal{B}']/(\Pi' \cup \{Q_i : i\in[k']\})$.
    
    For a positive integer $i$, let $f_i:=1-p^i-q^i$.
    By Lemma~\ref{lem:pqadd},
    \begin{linenomath*}\begin{equation*}\label{eqn:a}
        f_4+f_8=(1-p^4-q^4)+(1-p^8-q^8)>1-p^{12}-q^{12}=f_{12}.
    \end{equation*}\end{linenomath*}
    Because $p^{4}+p^{12} \ge2p^8$ and similarly for $q$, we have
    \begin{linenomath*}\begin{equation}\label{eqn:b}
        2f_8=2(1-p^8-q^8)\ge(1-p^4-q^4)+(1-p^{12}-q^{12})=f_4 +f_{12}.
    \end{equation}\end{linenomath*}
    Recall that
    $C_{t,1}>n-2C_{t,2}-\sqrt{n}-4$.

    By the linearity of the expectation, the expected red-degree of $Q_i$ in $H'$ is
    \begin{linenomath*}\begin{align*}
    \lefteqn{
        C_{t',1} f_{4}+C_{t',2} f_{8}+C_{t',3} f_{12} }\\
        &=(C_{t,1}-(\Lambda_{1,1}+2\Lambda_{3,0}+\Lambda_{2,0})-(\Lambda_{3,0} +\Lambda_{2,0})-C_1^*)f_{4}\\
        &\qquad+(C_{t,2}-(\Lambda_{1,1}+2\Lambda_{3,0}+\Lambda_{2,0})+2(\Lambda_{3,0}+\Lambda_{2,0})-C_2^*)f_{8}\\
        &\qquad+(C_{t,3}+(\Lambda_{1,1}+2\Lambda_{3,0}+\Lambda_{2,0})-(\Lambda_{3,0}+\Lambda_{2,0})-C_3^*) f_{12}\\
        &\ge(C_{t,1}-(\Lambda_{1,1}+2\Lambda_{3,0}+\Lambda_{2,0})-C_1^* )f_{4}\\
        &\qquad+(C_{t,2}-(\Lambda_{1,1}+2\Lambda_{3,0}+\Lambda_{2,0})-C_2^*)f_{8}\\
        &\qquad+(\Lambda_{1,1}+2\Lambda_{3,0}+\Lambda_{2,0}) f_{12}\tag*{by \eqref{eqn:b} and $C_{t,3}\ge C_3^*$}\\
        &\ge(C_{t,2}-C_{t,1})f_{8}+C_{t,1}f_{12}-(C_1^*+C_2^*)\tag*{by $f_4>f_{12}-f_8$}\\
        &>(C_{t,2}-(n-2C_{t,2}-\sqrt{n}))f_{8}\tag*{because $C_{t,1}+2C_{t,2}\le n-\sqrt{n}$}\\
        &\qquad+(n-2C_{t,2}-\sqrt{n}-4)f_{12}-(2\sqrt{n}+6)\tag*{because $C_{t,1}>n-2C_{t,2}-\sqrt{n}-4$}\\
        &\ge(3C_{t,2}-n)f_8+(n-2C_{t,2})f_{12}-(3\sqrt{n}+10)\tag*{because $f_{12}<1$}\\
        &\ge(3(\alpha-\delta)-1)nf_{8}+(1-2(\alpha-\delta))nf_{12}-(3\sqrt{n}+10)\tag*{because $3f_8-2f_{12}>0$}\\
        &=(3\alpha-1)nf_{8}+(1-2\alpha)nf_{12} -( 3f_{8}-2f_{12} )\delta n-(3\sqrt{n}+10)\\
        &\ge(2pq+2\varepsilon)n-\varepsilon n -(3\sqrt{n}+10)\tag*{by the definitions of $\varepsilon$ and $\delta$}\\
        &\ge(2pq+1.01c)n-(3\sqrt{n}+10).
    \end{align*}\end{linenomath*}
    For all sufficiently large $n$, 
    $3\sqrt{n}+10 <0.005cn$,
    and therefore $\E[\rdeg_{H'}(Q_i)]\ge(2pq+1.005c)n$.
    Note that $\rdeg_{H'}(Q_i)<n$.
    By Lemma~\ref{lem:Chernoffbound}\ref{cond:chernoff3}, for all sufficiently large~$n$,
    \begin{linenomath*}\[
        \Pr\left[\rdeg_{H'}(Q_i)\le(2pq+c)n\right]
        \le\exp\left(-\frac{(\frac{1}{200}cn)^2}{2\E[\rdeg_{H'}(Q_i)]}\right)
        \le\exp\left(-\frac{c^2 n}{80000}\right).
    \]\end{linenomath*}
    Since $k'\ge\frac{2}{7}\sqrt{n}$, we have that $\Pr[\DeltaR(G/\Pi_{t'})\le(2pq+c)n]\le\exp(-\frac{k'c^2n}{80000})\le\exp(-\frac{c^2 n^{3/2}}{280000})$.
    Therefore, \eqref{eq:stars} holds.
\end{proof}

In Case II of the above proof, since $(\alpha-\delta)n < C_{t,2}$ and $\sum_{i=3}^n(i\cdot C_{t,i})<\sqrt{n}+4$, we can see that most of the first $\alpha n$ contractions are accounted for by contracting many disjoint pairs of vertices.
In this sense, the contraction sequence is quite similar to Steps~1 and~2 of the contraction sequence that we outlined in Section~\ref{sec:upper}, up until this point.
Furthermore, we remark that the expected maximum red-degree of~$G/\Pi_{t'}$ is minimized when~$C_{t',3}$ is maximized, according to the computation for $\rdeg_{H'}(Q_i)$.
The optimal strategy in order to maximize $C_{t',3}$ would be to follow Steps~3 and~4 of the contraction sequence in Section~\ref{sec:upper} as closely as possible.
This provides some intuition as to why the expected red-degree in Step~4 of the contraction sequence in Section~\ref{sec:upper} is so closely related to the expected red-degree after~$t'$ contractions in the above proof, and why the threshold of~$p^*$ arises in each case.

\subsection{When {\boldmath$p=o(1)$}}\label{subsec:sparse}

Before proving Theorem~\ref{thm:main3}, we give a simple argument to show that for $\omega(1/n)\le p\le1/2$, with high probability the twin-width of the random graph $G(n,p)$ tends to infinity as $n$ tends to infinity.

\begin{proposition}\label{prop:not tiny 1}
    For $p:=p(n)$ with $1/n\le p\le1/2$ and every $\delta\in(0,4/7)$, 
    with high probability,
    \begin{linenomath*}\[
        \tww(G(n,p))\ge(1-\delta)np-\frac{4(1-\delta)}{\delta}.
    \]\end{linenomath*}
\end{proposition}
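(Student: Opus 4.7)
The plan is to apply Lemma~\ref{lem:tinygoodpairs} with $d := (1-\delta)np - 4(1-\delta)/\delta$ and $b := \lceil 4(1-\delta)/\delta\rceil$. First observe that if $np \le 4/\delta$, then $d \le 0$ and the proposition holds vacuously, so we may assume $np > 4/\delta$. In this case, $b + d \le (1-\delta)np + 1$, and by Lemma~\ref{lem:tinygoodpairs} it suffices to show that with high probability the number $N$ of pairs $\{u,v\}$ of distinct vertices of $G := G(n,p)$ with $r_G(u,v) < b + d$ is strictly less than $b$.

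Next, I would exploit the fact that for each pair $\{u,v\}$ the random variable $r_G(u,v)$ is distributed as $\mathcal{B}(n-2, 2p(1-p))$, with mean $\mu = 2p(1-p)(n-2) \ge p(n-2)$ (using $p \le 1/2$) and variance $\sigma^2 \le \mu$. A direct computation shows that the mean-threshold gap satisfies $\mu - (b+d) \ge \delta pn - O(1)$, which is $\Omega(\delta pn)$ under our assumption $np > 4/\delta$. Applying Chebyshev's inequality pairwise and then linearity of expectation yields a bound of the shape $\mathbb{E}[N] \le \binom{n}{2} \cdot O(1/(np\delta^2))$, and Markov's inequality, combined with the specific choice of $b$, is then used to conclude that $N < b$ with high probability, giving $\tww(G) > d$ as required.

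The main obstacle will be calibrating the constants so that the bound on $\mathbb{E}[N]$ fits inside $b$, given that both $b$ and $4(1-\delta)/\delta$ are of the same order of magnitude. The linchpin of this calibration is the elementary AM-GM inequality
\[
\delta pn + \frac{4(1-\delta)}{\delta} \ge 4\sqrt{(1-\delta)pn},
\]
which applied to the deviation $\mu - (b+d)$ forces precisely the additive constant $4(1-\delta)/\delta$ in the statement and lower-bounds the Chernoff exponent by $4(1-\delta)$ plus terms growing with $pn$. The restriction $\delta < 4/7$ should arise naturally from matching up all the lower-order error terms in these estimates (in particular the $-2p$ correction coming from $p(n-2)$ versus $np$, and the discretization entailed by $b = \lceil 4(1-\delta)/\delta \rceil$) while keeping the resulting bound on $\tww$ strictly positive; the careful bookkeeping of these constants is where I expect the proof to require the most attention.
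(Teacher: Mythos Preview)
Your approach has a genuine gap in the sparse regime. The strategy of bounding $\mathbb{E}[N]$ via a tail bound on $r_G(u,v)$ and then applying Markov with a \emph{constant} $b=\lceil 4(1-\delta)/\delta\rceil$ cannot succeed over the whole range $1/n\le p\le 1/2$. Concretely, suppose $np$ equals a fixed constant larger than $4/\delta$, say $np=4(1-\delta)/\delta^2$ (exactly where your AM--GM inequality is tight). Then $r_G(u,v)\sim\mathcal{B}(n-2,2p(1-p))$ has constant mean, and the best Chernoff exponent you can extract from the deviation $\mu-(b+d)\approx \delta pn+4(1-\delta)/\delta$ is precisely your $4(1-\delta)$, a constant. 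Hence $\Pr[r_G(u,v)<b+d]\ge c_0>0$, and $\mathbb{E}[N]=\Theta(n^2)$, which Markov cannot bring below the constant $b$. (Even more crudely: when $np=\Theta(1)$ there are $\Theta(n)$ isolated vertices, giving $\Theta(n^2)$ pairs with $r_G(u,v)=0$.) Your ``plus terms growing with $pn$'' only kick in once $pn\to\infty$; your argument is essentially the paper's second case and needs $p>c\ln n/n$ with $c>4/\delta^2$ to make $\binom{n}{2}e^{-\delta^2 pn/4}=o(1)$.

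The paper covers the remaining range $p\le c\ln n/n$ by a completely different mechanism that you are missing: it does \emph{not} count bad pairs in $G$, but instead passes to an induced subgraph. With high probability $|E(G)|\ge(1-\delta/2)p\binom{n}{2}$, so Proposition~\ref{prop:dense subgraph} yields an induced subgraph $H$ with $\delta(H)\ge\frac12(1-\delta/2)(n-1)p$. Separately, a first-moment calculation shows $G$ contains no $K_{2,m}$ with $m=\lceil \delta np/2\rceil$, so any two vertices share fewer than $\frac{\delta}{2}np$ neighbours. Combining these, \emph{every} pair in $H$ satisfies $r_H(u,v)>(1-\delta)np-3$, whence $\tww(G)\ge\tww(H)\ge(1-\delta)np-3$. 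The constraint $\delta<4/7$ arises here, not from your constant-matching, as it is exactly the condition $3\le 4(1-\delta)/\delta$.
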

\begin{proof}
    Let $c$ be a constant larger than $4/\delta^2$.
    We may assume that $n$ is sufficiently large so that $n\ge\frac{2ec}{\delta}\ln n$ holds.
    We may further assume that $np>\frac{4}{\delta}$ because the twin-width is always nonnegative.
    Let $G:=G(n,p)$.
    We consider two cases by comparing $p$ and $c\frac{\ln n}{n}$.

    \medskip
    \noindent\textbf{Case I.} $p\le c\frac{\ln n}{n}$.
    
    Since the expected number of edges is $p\binom{n}{2}$, by Lemma~\ref{lem:Chernoffbound}\ref{cond:chernoff3} applied to $\delta/2$, we have 
    \begin{linenomath*}\[
        \Pr\left[\abs{E(G)}<(1-\delta/2)\cdot p\binom{n}{2}\right]
        \le\exp\left(-\frac{\delta^2}{8}\cdot p\binom{n}{2}\right)=\exp(-\Omega(n^2p)),
    \]\end{linenomath*}
    which goes to $0$ as $n\to\infty$, because $p\ge1/n$.

    Let $m:=\lceil\frac{\delta np}{2}\rceil$, which is at least~$3$ as $np>4/\delta$.
    We claim that $G$ has no subgraph isomorphic to $K_{2,m}$ with high probability.
    The expected number of copies of $K_{2,m}$ in $G$ is
    \begin{linenomath*}\begin{align*}
       \binom{n}{m}\binom{n-m}{2} p^{2m}
        &\le\left(\frac{en}{m}\right)^m \frac{(n-m)^2}{2} p^{2m}\tag*{by Lemma~\ref{lem:Stirling}}\\
        &\le\left(\frac{2en}{\delta np}\right)^m\frac{(n-m)^2}{2} p^{2m}\\
        &\le n^2 \left(2ep/\delta \right)^m\\
        &\le n^2\left(\frac{2ec}{\delta}\frac{\ln n}{n}\right)^m\\
        &\le n^2 \left(\frac{2ec}{\delta} \frac{\ln n}{n}\right)^{3}\tag*{because $n \ge\frac{2ec}{\delta}\ln n$}\\
        &=\left(\frac{2ec}{\delta}\ln n\right)^{3} \frac{1}{n},
    \end{align*}\end{linenomath*}
    which converges to $0$ as $n\to\infty$.
    By Markov's inequality (Lemma~\ref{markov}), $G$~has no copy of~$K_{2,m}$ with high probability.
    Thus, with high probability,
    \begin{itemize}
        \item $\abs{E(G)}\ge(1-\delta/2)p\binom{n}{2}$ and
        \item for every pair $\{u,v\}$ of distinct vertices of $G$, $u$ and $v$ have less than $\frac{\delta}{2}np$ common neighbors in $G$.
    \end{itemize}
    By Proposition~\ref{prop:dense subgraph}, with high probability, $G$ has an induced subgraph $H$ such that
    \begin{itemize}
        \item $\delta(H)\ge\frac12 (1-\delta/2)(n-1)p$ and
        \item for every pair $\{u,v\}$ of distinct vertices of $H$, $u$ and $v$ have less than $\frac{\delta}{2}np$ common neighbors in $H$.
    \end{itemize}
    Note that for distinct vertices~$u$ and~$v$ of~$H$, 
    \begin{linenomath*}\[
        r_H(u,v)\ge(\deg_H(u)-1)+(\deg_H(v)-1)-\abs{N_H(u)\cap N_H(v)}>(1-\delta/2)(n-1)p-2-\frac{\delta np}{2}.
    \]\end{linenomath*}
    Therefore, with high probability,
    \begin{linenomath*}\[
        \tww(G)\ge\tww(H)\ge(1-\delta)np-(1-\delta/2)p-2\ge(1-\delta)np-3\ge(1-\delta)np-\frac{4(1-\delta)}{\delta},
    \]\end{linenomath*}
    where the last inequality is by $\delta\le4/7$.
    Thus, $\tww(G)\ge(1-\delta)np-\frac{4(1-\delta)}{\delta}$ with high probability when $p\le c\frac{\ln n}{n}$.

    \medskip
    \noindent\textbf{Case II.} $p>c\frac{\ln n}{n}$.
    
    Note that for distinct vertices $u$ and $v$ of $G$, $\E[r_G(u,v)]=2p(1-p)(n-2)$.
    By Lemma~\ref{lem:Chernoffbound}\ref{cond:chernoff3} applied to $\delta$,
    \begin{linenomath*}\begin{align*}
        \lefteqn{\Pr\left[|r_G(u,v)| \le(1-\delta)2p(1-p)(n-2)\right] }\\
        &\le\exp\left(-\frac{\delta^2}{2}2p(1-p)(n-2)\right)\\
        &\le\exp\left(-\frac{\delta^2}{2}2p(1-p)n+\frac{\delta^2}{2}\right)
        \tag*{because $p(1-p)\le1/4$}\\
        &\le\exp\left(-\delta^2 \cdot \frac{c\ln n}{n}\cdot\frac{1}{2}\cdot n\right)\exp(\delta^2/2)=\exp(\delta^2/2)\cdot n^{-c\delta^2/2}.
    \end{align*}\end{linenomath*}
    By the union bound, 
    \begin{linenomath*}\begin{multline*}
        \Pr\left[\text{there exist $u,v\in V(G)$ such that }\abs{r_G(u,v)}\le(1-\delta)2p(1-p)(n-2)\right]\\
        \le\binom{n}{2}\exp(\delta^2/2)\cdot n^{-c\delta^2/2},
    \end{multline*}\end{linenomath*}
    which converges to~$0$ as $n\to\infty$.
    Thus, with high probability, for every pair $\{u,v\}$ of distinct vertices of~$G$, we have 
    \begin{linenomath*}\[
        r_G(u,v)>(1-\delta)2p(1-p)(n-2)\ge(1-\delta)2p(1-p)n-(1-\delta)\ge(1-\delta)np-1.
    \]\end{linenomath*}
    Hence, $\tww(G)>(1-\delta)np-1$ with high probability when $p>c\frac{\ln n}{n}$.
    This completes the proof.
\end{proof}

We now prove Theorem~\ref{thm:main3}, which shows that the twin-width of the random graphs $G(n,p)$ for $p=\omega\left(\frac{\ln n}{n}\right)$ and $p\le1/2$ is $\Theta(n\sqrt{p})$.
We use the following theorem to obtain the upper bound.

\begin{theorem}[Ahn, Hendrey, Kim, and Oum~\cite{AHKO2021}]\label{thm:edge bound}
    For an $m$-edge graph~$G$,
    \begin{linenomath*}\[
        \tww(G)<\sqrt{3m}+\frac{m^{1/4}\sqrt{\ln m}}{4\cdot3^{1/4}}+\frac{3\cdot m^{1/4}}{2},
    \]\end{linenomath*}
    that is, $\tww(G)=O(\sqrt{m})$.
\end{theorem}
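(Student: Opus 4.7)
The plan is to construct an explicit contraction sequence for an $m$-edge graph $G$ by partitioning $V(G)$ into $\Theta(\sqrt m)$ blocks of $\Theta(\sqrt m)$ vertices each and processing the blocks one at a time; the $\sqrt{\ln m}$ factor in the statement hints that the partition should be chosen via a probabilistic argument.

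First I would reduce to the case that $G$ has no isolated vertex, since isolated vertices can be contracted last without creating any red edge. This ensures $n \le 2m$, which is essential below. I would then pick a parameter $k$ of order $\sqrt{3m}$ and consider a uniformly random partition $\Pi = \{P_1,\dots,P_t\}$ of $V(G)$ into parts of size at most $k$, with $t \le \lceil n/k\rceil = O(\sqrt m)$.

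Second, I would bound the red-degrees in the trigraph $G/\Pi$. For a pair of parts $\{P_i,P_j\}$ to become a red edge, the bipartite graph $G[P_i,P_j]$ must be neither empty nor complete, so in particular it must contain at least one edge of $G$; since each edge of $G$ lies in a unique such pair, the total number of ``mixed'' pairs is at most $m$ and the average red-degree in $G/\Pi$ is at most $2m/t$. To upgrade the average to a uniform bound, I would apply a concentration inequality (Chernoff or Azuma with bounded differences) to the random partition: the red-degree of a single part is a sum of $O(\sqrt m)$ indicators and thus concentrates within $O(m^{1/4}\sqrt{\ln m})$ of its mean, and a union bound over the $t = O(\sqrt m)$ parts preserves this deviation order. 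Optimising $k$ against the trade-off between the cross-block contribution $2m/t \approx 2mk/n$ and the within-block contribution $k-1$, under the constraint $n \le 2m$, yields the leading constant $\sqrt 3$.

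Third, I would specify a concrete contraction order: process the blocks in sequence $P_1,\dots,P_t$, contracting each $P_i$ down to a single vertex before moving on. During the internal contractions of $P_i$ the red-degree of the emerging aggregate vertex decomposes into a within-block contribution (at most $|P_i|-1 < k$) and a cross-block contribution; vertices of not-yet-touched blocks $P_j$ with $j > i$ are still singletons and therefore only inherit red edges from the already-resolved blocks, again controlled by the $G/\Pi$ bound. After $G/\Pi$ is reached, a further $t-1$ contractions finish the sequence, and the red-degrees stay $O(\sqrt m)$ throughout since the ``edge-count'' argument continues to apply.

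The main obstacle will be controlling red-degree during the \emph{intermediate} within-block contractions: the clean averaging bound applies to $G/\Pi$ itself, but an intermediate vertex representing a strict subset $P' \subsetneq P_i$ can have cross-block red edges to parts that full $P_i$ does not, so one must either order the internal contractions carefully or extend the concentration argument to every intermediate sub-partition (absorbed into the $\sqrt{\ln m}$ factor via a refined union bound). Extracting the precise constants $\sqrt 3$ and $1/(4\cdot 3^{1/4})$ is then a careful optimisation that weighs $k$, the cross-block expectation, and the Chernoff deviation against each other; the additive $(3/2)m^{1/4}$ term absorbs rounding in the choice of $k$ and the finishing contractions on $G/\Pi$.
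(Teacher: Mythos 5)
This theorem is not proved in the present paper at all; it is quoted from \cite{AHKO2021}, so there is no in-paper argument to compare against line by line. Judged on its own terms, your proposal has a genuine gap, and it is exactly at the step you flag as ``the main obstacle'': the red-degree of a partial aggregate $P'\subsetneq P_i$ towards the still-untouched singleton vertices of later blocks. That quantity is not controlled by the mixed-pair count of $G/\Pi$: an outside singleton $u$ becomes a red neighbour of $P'$ as soon as $u$ sees some but not all of $P'$, so the aggregate's red-degree can be as large as $\sum_{v\in P'}\deg_G(v)$. The star $K_{1,m}$ (a cograph, twin-width $0$) kills the scheme: whatever partition you draw, the block containing the centre consists of the centre and $k-1$ leaves, and any full collapse of that block must at some point contract a set containing the centre with a set of leaves; at that moment the new vertex has a red edge to \emph{every} leaf outside the block that is still a singleton, i.e.\ red-degree at least $m-k=\Theta(m)$ unless that block happens to be processed essentially last (probability $O(1/t)$ under your schedule). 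No ordering of the contractions inside the block avoids this, and no refined union bound helps, because the failure is deterministic --- the expectation itself is $\Theta(m)$ --- so it cannot be ``absorbed into the $\sqrt{\ln m}$ factor''. Any fix must be structural (defer high-degree vertices, or abandon the fixed partition and pick each contraction pair adaptively), and the quantitative target is delicate even for bounded degrees: for a disjoint union of $m/3$ triangles, collapsing a random block of $k\approx\sqrt{3m}$ vertices already forces red-degree about $2k\approx 2\sqrt{3m}$ towards outside singletons, so the ``collapse each block completely, one block at a time'' schedule cannot even deliver the leading constant $\sqrt3$ with your choice of $k$.

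Two further points. Your concentration step conflates two different averages: $2m/t$ bounds the \emph{average} red-degree over the parts of a fixed partition, not the expected red-degree of the part containing a fixed vertex under a random partition; for a part containing a vertex of degree $\gg\sqrt m$ the latter is close to $t-1$, so a Chernoff/Azuma bound ``around its mean'' does not yield a uniform bound near $2m/t$. Likewise the optimisation as stated does not produce $\sqrt3$: the quantities $2mk/n+k$ and $n/k$ trade off to $2\sqrt m$ in the worst case $n=2m$, not $\sqrt{3m}$. For comparison, the actual argument of \cite{AHKO2021} is not of the fixed-random-partition type: roughly, high-degree vertices (of which there are at most $2m/\sqrt{3m}$) are set aside, low-degree vertices are eliminated by contractions chosen \emph{adaptively} via a counting argument guaranteeing at each step a pair with small neighbourhood symmetric difference, and the proof finishes by applying their $n/2+O(\sqrt{n\ln n})$ bound to the $O(\sqrt m)$ remaining vertices --- which is precisely the kind of adaptivity needed to cope with the hub examples above, and which explains the shape of the lower-order terms in the stated bound.
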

\begin{proof}[Proof of Theorem~\ref{thm:main3}]
    Let $G:=G(n,p)$.
    Since $\E[\abs{E(G)}]=p\binom{n}{2}$, by Lemma~\ref{lem:Chernoffbound}\ref{cond:chernoff1} applied to~$\delta=1$,
    \begin{linenomath*}\[
        \Pr\left[\abs{E(G)}\ge2p\binom{n}{2}\right]
        \le\exp\left(-\frac{1}{3}\cdot p\binom{n}{2}\right)=\exp\left(-\Omega(n^2p)\right).
    \]\end{linenomath*}
    Thus, by Theorem~\ref{thm:edge bound}, with high probability, $\tww(G)=O(n\sqrt{p})$.
 
    We now show that $\tww(G)>\frac{999}{10^6}n\sqrt{p}$ with high probability.
    We may assume that $n\ge n\sqrt{p}\ge15000$.
    We take an arbitrary partition sequence $\Pi_1,\Pi_2,\ldots,\Pi_n$ of $[n]$.
    For each $s\in[n]$ and an integer~$i$, let $C_{s,i}$ be the number of parts of~$\Pi_s$ of size~$i$.
    
    First we will prove an upper bound on the probability that $\DeltaR(G/\Pi_t)\le\frac{999}{10^6}n\sqrt{p}$ for all $t\in[n]$.
    Since $3/\sqrt{p}\le n$, there exists the smallest integer $s$ such that
    \(
        \sum_{i= \lceil \frac{3}{\sqrt{p}}\rceil }^n (i\cdot C_{s,i})\ge\frac{n}{2}
    \).
    Since~$s$ is the smallest, 
    \begin{linenomath*}\[ 
        \sum_{i= \lceil \frac{3}{\sqrt{p}}\rceil }^n (i\cdot C_{s,i})
        \le\sum_{i= \lceil \frac{3}{\sqrt{p}}\rceil }^n (i\cdot C_{s-1,i})+ 2\cdot\frac{3}{\sqrt{p}}
        <\frac{n}{2}+\frac{6}{\sqrt{p}},
    \]\end{linenomath*}
    and so $\sum_{i=1}^{\lceil 3/\sqrt{p}\rceil-1 }(i\cdot C_{s,i})>\frac{n}{2}-\frac{6}{\sqrt{p}}$.
     
    For each part $Z$ of $\Pi_s$ of size at least $3/\sqrt{p}$, we partition~$Z$ into $\left\lfloor \frac{|Z|}{\lceil 3/\sqrt{p}\rceil}\right\rfloor$ parts of size $\lceil3/\sqrt{p}\rceil$ and at most one part of smaller size.
    Let~$\mathcal{T}$ be the set of all parts of size $\lceil3/\sqrt{p}\rceil$ across all of these partitions.
    Then
    \begin{linenomath*}\[
        \abs{\mathcal T} \ge\frac{1}{2 \lceil 3/\sqrt{p}\rceil-1} \sum_{i= \lceil \frac{3}{\sqrt{p}}\rceil }^n (i \cdot C_{s,i})\ge 
        \frac{1}{2\left(\frac{3}{\sqrt{p}}+1\right)-1}
        \cdot\frac{n}{2}\ge 
        \frac{1}{\frac{6}{\sqrt{p}}+\frac{1}{\sqrt{2p}}}
        \cdot \frac{n}{2}
        =\frac{1}{12+\sqrt{2}}n\sqrt{p}.
    \]\end{linenomath*}
    Let $\mathcal{S}$ be a maximal set of pairwise disjoint subsets of~$V(G)$ such that each $Z\in\mathcal{S}$ is a union of parts of~$\Pi_s$ and $\frac{3}{2\sqrt{p}}\le\abs{Z}<\frac{3}{\sqrt{p}}$.
    Since $n\sqrt{p}\ge15000$, we have 
    \begin{linenomath*}\begin{align*}
        \abs{\mathcal S}
        &\ge\frac{1}{3/\sqrt{p}}\left( \sum_{i=1}^{\lceil\frac{3}{\sqrt{p}}\rceil-1} (i \cdot C_{s,i})-\frac{3}{2\sqrt{p}}\right)>\frac{\sqrt p}{3} \left( \frac{n}{2}-\frac{6}{\sqrt{p}}-\frac{3}{2\sqrt{p}}\right)\\
        &=\frac{n\sqrt{p}}{6}-\frac{5}{2}\ge\frac{n\sqrt{p}}{6}-\frac{5}{2}\cdot \frac{n\sqrt{p}}{15000}=\frac{999}{6000}n\sqrt{p}.
    \end{align*}\end{linenomath*}
    
    We construct an auxiliary random bipartite graph $H_1$ on bipartition $\mathcal{T}\cup\mathcal{S}$ such that $T\in\mathcal T$ and $S\in\mathcal S$ are adjacent in $H_1$ if $G$ has an edge having one end in $T$ and another end in $S$.
    Let $a:=\min\{\Pr[TS\in E(H_1)]:T\in\mathcal{T},S\in\mathcal{S}\}=1-\max\{(1-p)^{\abs{T}\cdot\abs{S}}:T\in\mathcal{T},S\in\mathcal{S}\}$.
    Then
    \begin{linenomath*}\[
        1-a=\max_{T\in\mathcal{T},S\in\mathcal{S}}(1-p)^{\abs{T}\cdot\abs{S}}\le(1-p)^{\frac{9}{2p}}\le\left(e^{-p}\right)^{\frac{9}{2p}}=e^{-9/2}.
    \]\end{linenomath*}
    Therefore, $a\ge1-e^{-9/2}$.
    By our choice of $a$, we have $\E[\abs{E(H_1)}]\ge a\abs{\mathcal{T}}\abs{\mathcal{S}}$.
    By Lemma~\ref{lem:Chernoffbound}\ref{cond:chernoff3} applied to~$\delta=67/100$,
    \begin{linenomath*}\[
        \Pr\left[\abs{E(H_1)} \le\frac{33a}{100}\abs{\mathcal T}\abs{\mathcal S}\right]
        \le\exp\left( -\frac{a}{2}\left(\frac{67}{100}\right)^2\abs{\mathcal T}\abs{\mathcal S}\right).
    \]\end{linenomath*}
    
    For each $S\in\mathcal{S}$, we fix a vertex $w_S\in S$.
    Let $H_2$ be the random bipartite graph on bipartition $\mathcal{T}\cup\mathcal{S}$ such that $T\in\mathcal{T}$ and $S\in\mathcal{S}$ are adjacent in~$H_2$ if and only if $T\subseteq N_G(w_S)$.
    Let $X:=\abs{E(H_2)}$.
    For each $T\in\mathcal T$ and $S\in\mathcal{S}$, we have that
    \( 
        \Pr[T\subseteq N_G(w)]=p^{\abs{T}}\le p^{5}
    \)
    as $\abs{T}\ge\lceil\frac{3}{\sqrt{p}}\rceil\ge5$.
    Thus,
    \begin{linenomath*}\[
        \E[X]=\sum_{(T,S)\in\mathcal{T}\times\mathcal{S}}p^{\abs{T}}\le\abs{\mathcal T}\abs{\mathcal S}p^{5}.
    \]\end{linenomath*}
    Since $a\ge1-e^{-9/2}$, we have that $\frac{a}{2\ln 2}\left(\frac{67}{100}\right)^2>6(1/2)^5\ge 6\cdot p^{5}$, and therefore we deduce by Lemma~\ref{lem:Chernoffbound}\ref{cond:chernoff2} that
    \begin{linenomath*}\[
        \Pr\left[X\ge\frac{a}{2\ln 2}\left(\frac{67}{100}\right)^2\abs{\mathcal T}\abs{\mathcal S}\right]
        \le2^{-\frac{a}{2\ln2}\left(\frac{67}{100}\right)^2\abs{\mathcal T}\abs{\mathcal S}}
        =\exp\left(-\frac{a}{2}\left(\frac{67}{100}\right)^2\abs{\mathcal T}\abs{\mathcal S}\right).
    \]\end{linenomath*}
    
    Now, we let $H$ be the random bipartite graph $(\mathcal{T}\cup\mathcal{S},E(H_1)\setminus E(H_2))$.
    Note that
    \begin{linenomath*}\[
        \left(\frac{33}{100}-\frac{1}{2\ln 2}\left(\frac{67}{100}\right)^2\right)a>\frac{3}{500}
    \]\end{linenomath*}
    It follows that $\abs{E(H)}\ge\abs{E(H_1)}-\abs{E(H_2)}>\frac{3}{500}\abs{\mathcal T}\abs{\mathcal S}$ with probability at least
    \begin{linenomath*}\[ 
        1-2\exp\left(-\frac{a}{2}\left(\frac{67}{100}\right)^2\abs{\mathcal T}\abs{\mathcal S}\right).
    \]\end{linenomath*}
    If $\abs{E(H)}>\frac{3}{500}\abs{\mathcal T}\abs{\mathcal S}$, then $H$ has a vertex $T\in\mathcal T$ of degree more than \(\frac3{500}\abs{\mathcal S}\ge\frac{999}{10^6}n\sqrt{p}\), and therefore 
    for the part $Z$ of $\Pi_s$ containing $T$, we have $\rdeg_{G/\Pi_s}(Z)\ge\deg_H (T)>\frac{999}{10^6}n\sqrt{p}$
    with probability at least $1-2\exp\left(-\frac{a}{2}\left(\frac{67}{100}\right)^2\abs{\mathcal T}\abs{\mathcal S}\right)$.
    This proves that 
    \begin{linenomath*}\begin{align*}
        \Pr\left[\DeltaR(G/\Pi_t)\le\frac{999}{10^6}n\sqrt{p}\text{ for all }t\in[n]\right]
        &\le2\exp\left(-\frac{a}{2}\left(\frac{67}{100}\right)^2\abs{\mathcal T}\abs{\mathcal S}\right)\\
        &\le2\exp\left(-\frac{a}{2}\left(\frac{67}{100}\right)^2\frac{999}{(12+\sqrt{2})\cdot 6000}n^2p\right).
    \end{align*}\end{linenomath*}
    
    Recall that we assume $p\ge 726\frac{\ln n}{n}$.
    Since the number of partition sequences of~$[n]$ is at most $\exp(2n\ln n)$ by Lemma~\ref{lem:number-partition-sequence}, the probability that there is a $(\frac{999}{10^6}n\sqrt{p})$-contraction sequence is at most 
    \begin{linenomath*}\begin{multline*}
        2\exp\left(-\frac{a}{2}\left(\frac{67}{100}\right)^2\frac{999}{(12+\sqrt{2})\cdot 6000}n^2p\right)\cdot e^{2n\ln n}\\
        \le2\exp\left(\left(2-\frac{726(1-e^{-9/2})\cdot 999\cdot 0.67^2}{2\cdot(12+\sqrt2)\cdot 6000}\right)n\ln n\right)\le2 e^{-0.0001n\ln n},
    \end{multline*}\end{linenomath*}
    which converges to $0$ as $n\to \infty$.
    This completes the proof.
 \end{proof}
 
\section*{Acknowledgment}
We thank the anonymous referees for their careful reading and helpful suggestions.

\providecommand{\bysame}{\leavevmode\hbox to3em{\hrulefill}\thinspace}
\providecommand{\MR}{\relax\ifhmode\unskip\space\fi MR }
% \MRhref is called by the amsart/book/proc definition of \MR.
\providecommand{\MRhref}[2]{%
  \href{http://www.ams.org/mathscinet-getitem?mr=#1}{#2}
}
\providecommand{\href}[2]{#2}

\appendix

\section{Proofs of Lemmas~\ref{lem:binomUpper} and~\ref{lem:binomLower}}\label{app:proofs}

We will use the following lemmas to prove Lemma~\ref{lem:binomUpper}

\begin{lemma}[Hoeffding~\cite{Hoeffding1963}]\label{lem:sumBernoulli}
    Let $X_1,\ldots,X_n$ be mutually independent random variables such that $0\le X_i \le1$ for each $i\in\{1,\ldots,n\}$.
    Let $X:=\sum_{i=1}^nX_i$ and $p:=\frac{1}{n}\E[X]$.
    Then for all $t\in(p,1)$,
    \begin{linenomath*}\[
        \Pr[X \ge tn]\le e^{-n \cdot D(t\| p)}
    \]\end{linenomath*}
    where $D(x\|y) := x \ln \frac{x}{y}+(1-x)\ln \frac{1-x}{1-y}$.
\end{lemma}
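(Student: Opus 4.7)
The plan is to follow the classical exponential-moment (Chernoff) method, which for Bernoulli-dominated summands tightens to the Kullback--Leibler form stated in the lemma. First I would fix an arbitrary $\lambda>0$ and apply Markov's inequality to $e^{\lambda X}$:
\[
\Pr[X\geq tn] \;=\; \Pr[e^{\lambda X}\geq e^{\lambda tn}] \;\leq\; e^{-\lambda tn}\,\E[e^{\lambda X}].
\]
By independence, $\E[e^{\lambda X}]=\prod_{i=1}^n \E[e^{\lambda X_i}]$, so the task reduces to bounding each moment generating factor.

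Next, because $X_i\in [0,1]$ and $e^{\lambda x}$ is convex, I would use the chord bound $e^{\lambda X_i}\leq (1-X_i)\cdot 1+X_i\cdot e^{\lambda}=1+X_i(e^{\lambda}-1)$; taking expectations gives $\E[e^{\lambda X_i}]\leq 1+p_i(e^{\lambda}-1)$. Then I would homogenize over the $p_i$'s via the AM--GM inequality:
\[
\prod_{i=1}^n\bigl(1+p_i(e^{\lambda}-1)\bigr)\;\leq\;\left(\frac{1}{n}\sum_{i=1}^n\bigl(1+p_i(e^{\lambda}-1)\bigr)\right)^{\!n}\;=\;\bigl(1+p(e^{\lambda}-1)\bigr)^n.
\]
Combining everything yields $\Pr[X\geq tn]\leq\bigl(e^{-\lambda t}(1+p(e^{\lambda}-1))\bigr)^n$ for every $\lambda>0$.

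Finally I would optimize in $\lambda$. The natural choice is $\lambda=\ln\frac{t(1-p)}{p(1-t)}$, which is positive exactly because $t>p$. A short computation gives $p(e^{\lambda}-1)=\frac{t-p}{1-t}$, so $1+p(e^{\lambda}-1)=\frac{1-p}{1-t}$, and substituting back produces
\[
\Pr[X\geq tn]\;\leq\;\left(\left(\frac{p}{t}\right)^{\!t}\left(\frac{1-p}{1-t}\right)^{\!1-t}\right)^{\!n}\;=\;\exp\bigl(-n\,D(t\Vert p)\bigr).
\]

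The argument is textbook, so there is no real obstacle; the only steps that require a little attention are (i) invoking convexity on $[0,1]$ rather than a quadratic bound (which would give only a Hoeffding-style, not a KL-form, exponent), (ii) passing from the individual $p_i$ to their average $p$ via AM--GM, and (iii) checking that the optimizing $\lambda$ is indeed positive under the hypothesis $p<t<1$. If one prefers, the optimization can be replaced by noting that $\inf_{\lambda\geq 0}\bigl(e^{-\lambda t}(1+p(e^{\lambda}-1))\bigr)=e^{-D(t\Vert p)}$ is precisely the Legendre transform identity defining the binary relative entropy.
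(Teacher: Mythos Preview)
Your argument is correct and is exactly the classical Chernoff--Hoeffding proof. Note, however, that the paper does not actually prove this lemma: it is quoted as a known result with a citation to Hoeffding (1963) and then used as a black box to derive Lemma~\ref{lem:binomUpper}. So there is no ``paper's own proof'' to compare against; what you wrote is precisely the standard derivation one would find in Hoeffding's original paper or any textbook treatment.
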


\begin{lemma}\label{lem:approx ln}
    For all $x\in[0,1)$ and all $y\ge0$,
    \begin{linenomath*}\begin{align*}
        \ln (1-x)&\le -x-\frac{x^2}{2},\\
        y-\frac{y^2}{2}\le\ln(1+y) &\le 
        y-\frac{y^2}{2}+\frac{y^3}{3}.
    \end{align*}\end{linenomath*}
    Moreover, if $0\le x\le 3/10$, then
    \begin{linenomath*}\[
        \ln (1-x)\ge -x-\frac{x^2}{2}-x^3.
    \]\end{linenomath*}
\end{lemma}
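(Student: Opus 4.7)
The plan is to derive all four inequalities from the Taylor series $\ln(1+y) = y - y^2/2 + y^3/3 - y^4/4 + \cdots$, either by direct term-by-term comparison on the series or via the standard ``$f(0)=0$ together with $f'\ge 0$'' trick.

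For $\ln(1-x)\le -x - x^2/2$, I would observe that every term in the expansion $\ln(1-x) = -\sum_{k\ge 1} x^k/k$ is nonpositive when $x\in[0,1)$, so truncating after two terms immediately yields the desired upper bound. For the lower bound $y-y^2/2 \le \ln(1+y)$, I would set $f(y) := \ln(1+y) - y + y^2/2$ and note that $f(0)=0$ while
\[
f'(y) = \frac{1}{1+y} - 1 + y = \frac{y^2}{1+y} \ge 0 \quad\text{for all } y\ge 0,
\]
so $f$ is nondecreasing on $[0,\infty)$. The upper bound $\ln(1+y)\le y - y^2/2 + y^3/3$ is entirely analogous: let $g(y) := y - y^2/2 + y^3/3 - \ln(1+y)$, check $g(0) = 0$, and compute $g'(y) = 1 - y + y^2 - 1/(1+y) = y^3/(1+y) \ge 0$.

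The only inequality requiring the numerical hypothesis is the final one, $\ln(1-x) \ge -x - x^2/2 - x^3$ for $0 \le x \le 3/10$. After rearranging using the Taylor series of $\ln(1-x)$, it suffices to show that $\sum_{k\ge 3} x^k/k \le x^3$. Bounding each coefficient $1/k$ by $1/3$ and summing the resulting geometric series gives $\sum_{k\ge 3} x^k/k \le x^3/(3(1-x))$, and the hypothesis $x \le 3/10$ yields $1/(3(1-x)) \le 10/21 < 1$, which closes the argument with considerable room to spare. I do not foresee any substantive obstacle, as all four statements are routine manipulations of the logarithm's power series; the only thing to be slightly careful about is the direction of the inequalities so that the ``bound coefficients above by $1/3$'' step in the last part is applied correctly.
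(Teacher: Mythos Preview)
Your proposal is correct. The paper's own argument is slightly different in packaging: it invokes Taylor's theorem with Lagrange remainder, writing
\[
\ln(1-x) = -x - \tfrac{1}{2}x^2 - \tfrac{1}{3(1-c)^3}x^3,
\qquad
\ln(1+y) = y - \tfrac{1}{2}y^2 + \tfrac{1}{3(1+c')^3}y^3
\]
for some $c\in(0,x)$, $c'\in(0,y)$, and then reads off all four inequalities from the sign and size of the remainder (in particular, $\tfrac{1}{3(1-c)^3} < \tfrac{1000}{3\cdot 7^3} < 1$ when $x\le 3/10$). Your approach instead mixes direct tail bounds on the series $-\sum_{k\ge 1} x^k/k$ with the ``$f(0)=0$, $f'\ge 0$'' monotonicity trick. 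Both are equally elementary; the Lagrange-remainder version handles all four inequalities uniformly with one formula, while your version avoids quoting Taylor's theorem at the cost of a couple of separate computations. Either is perfectly acceptable here.
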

\begin{proof}
    We may assume that neither $x$ nor $y$ is $0$.
    By Taylor's theorem, there exist $c\in(0,x)$ and $c'\in(0,y)$ such that 
    \begin{linenomath*}\[ 
        \ln(1-x)=-x-\frac12 x^2-\frac{1}{3(1-c)^3}{x^3},\quad 
        \ln(1+y)=y-\frac1{2} y^2+\frac{1}{3(1+c')^3}{y^3}.
    \]\end{linenomath*}
    Since $c\in(0,x)$, if $x\le 3/10$, then $\frac{1}{3(1-c)^3}< 1000/(3\cdot7^3)<1$.
    Thus, the inequalities hold.
\end{proof}

We now prove Lemma~\ref{lem:binomUpper}.

\binomUpper*

\begin{proof}
    Denote $q:=1-p$.
    Let $Y_i=1-X_i$ and $Y=\sum_{i=1}^n Y_i$.
    Then $X\le(p-\varepsilon)n$ if and only if $Y\ge(q+\varepsilon)n$.
    We claim that
    \begin{linenomath*}\[ 
        \Pr[Y \ge(q+\varepsilon)n] \le\exp\left(-\frac{n \varepsilon^2}{2pq}+\frac{n \varepsilon^3}{2p^2q^2} \right).
    \]\end{linenomath*}
    Note that $0<\varepsilon/p\le3/10<1$.
    Thus, by Lemma~\ref{lem:approx ln},
    \begin{linenomath*}\begin{align*}
        D(q+\varepsilon\| q)
        &=(q+\varepsilon)\ln\left(1+\frac{\varepsilon}{q}\right)+(p-\varepsilon)\ln\left(1-\frac{\varepsilon}{p}\right)\\
        &\ge(q+\varepsilon)\left(\frac{\varepsilon}{q}-\frac{\varepsilon^2}{2q^2}\right)
        +(p-\varepsilon)\left(-\frac{\varepsilon}{p}-\frac{\varepsilon^2}{2p^2}-\frac{\varepsilon^3}{p^3}\right)\\
        &=\varepsilon^2\left(\frac{1}{2p}+\frac{1}{2q}\right)+\varepsilon^3\left(-\frac{1}{2q^2}+\frac{1}{2p^2}-\frac{1}{p^2}\right)+\frac{\varepsilon^4}{p^3}\\
        &=\frac{\varepsilon^2}{2pq}+\varepsilon^3\left(\frac{-1+2pq}{2p^2q^2}\right)+\frac{\varepsilon^4}{p^3}>\frac{\varepsilon^2}{2pq}-\frac{\varepsilon^3}{2p^2q^2}.
    \end{align*}\end{linenomath*}
    Since $0<\varepsilon\le3p/10$, we have that $q+\varepsilon\in(q,1)$.
    Hence, by Lemma~\ref{lem:sumBernoulli}, we obtain the desired inequality.
\end{proof}

To prove Lemma~\ref{lem:binomLower}, we use the following standard bounds on the binomial coefficients obtained from Stirling's formula.
In the proof of Lemma~\ref{lem:binomLower}, we only use the lower bound for $\binom{n}{\kappa n}$ in Lemma~\ref{lem:approxBC}.

\begin{lemma}[See~{\cite[Lemma~17.5.1]{Cover2006}}]\label{lem:approxBC}
    For an integer $n\ge1$ and $\kappa\in(0,1)$ such that~$\kappa n$ is an integer,
    \begin{linenomath*}\[
        \frac{1}{\sqrt{8n\kappa(1-\kappa)}} e^{n \cdot H(\kappa,(1-\kappa))} \le\binom{n}{\kappa n}\le\frac{1}{\sqrt{\pi n\kappa(1-\kappa)}} e^{n \cdot H(\kappa,(1-\kappa))}
    \]\end{linenomath*}
    where $H(x,y) :=-x \ln x-y \ln y$.
\end{lemma}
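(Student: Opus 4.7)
The plan is to derive both inequalities from Stirling's formula in its refined form: for every positive integer $m$, one has $m!=\sqrt{2\pi m}(m/e)^m e^{\lambda_m}$ with $\frac{1}{12m+1}<\lambda_m<\frac{1}{12m}$. Applying this to each of the three factorials in the expansion $\binom{n}{\kappa n}=\frac{n!}{(\kappa n)!\,((1-\kappa)n)!}$, the powers of $e$ cancel (since $\kappa n+(1-\kappa)n=n$), the product $n^n/((\kappa n)^{\kappa n}((1-\kappa)n)^{(1-\kappa)n})$ simplifies to $\kappa^{-\kappa n}(1-\kappa)^{-(1-\kappa)n}=e^{nH(\kappa,1-\kappa)}$, and the square-root prefactors combine to $\sqrt{2\pi n}/(\sqrt{2\pi\kappa n}\sqrt{2\pi(1-\kappa)n})=1/\sqrt{2\pi n\kappa(1-\kappa)}$. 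This yields the exact identity
\[
    \binom{n}{\kappa n}=\frac{e^{nH(\kappa,1-\kappa)}}{\sqrt{2\pi n\kappa(1-\kappa)}}\cdot E,\qquad E:=\exp\bigl(\lambda_n-\lambda_{\kappa n}-\lambda_{(1-\kappa)n}\bigr),
\]
so the lemma reduces to establishing $\sqrt{\pi}/2\le E\le\sqrt{2}$.

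The upper bound is immediate: since $\lambda_{\kappa n},\lambda_{(1-\kappa)n}\ge 0$ and $\lambda_n\le 1/(12n)\le 1/24$ (note that $\kappa\in(0,1)$ with $\kappa n$ integer forces $n\ge 2$), we obtain $E\le e^{1/24}<\sqrt{2}$. For the lower bound, combining $\lambda_n\ge 0$ with $\lambda_{\kappa n}\le 1/(12\kappa n)$ and $\lambda_{(1-\kappa)n}\le 1/(12(1-\kappa)n)$ and the identity $\frac{1}{\kappa n}+\frac{1}{(1-\kappa)n}=\frac{1}{n\kappa(1-\kappa)}$ gives $E\ge \exp(-1/(12n\kappa(1-\kappa)))$.

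The main obstacle is that $n\kappa(1-\kappa)$ can be as small as $(n-1)/n$ when $\kappa n\in\{1,n-1\}$, so the above estimate alone is too weak in those extremal cases. I would therefore split the remaining argument into two regimes. In the interior regime $n\kappa(1-\kappa)\ge 1$ the estimate $E\ge e^{-1/12}$ suffices, since $e^{-1/12}\ge\sqrt{\pi}/2$ is equivalent to $e^{1/6}\le 4/\pi$, a trivial numerical check. In the boundary regime $\kappa n\in\{1,n-1\}$ the Stirling estimate is not sharp enough, but here $\binom{n}{\kappa n}=n$ and the desired inequality reduces to the elementary $\bigl(n/(n-1)\bigr)^{n-1}\le\sqrt{8(n-1)/n}$, which is tight at $n=2$ and, for $n\ge 3$, follows from the monotone convergence $(1+1/(n-1))^{n-1}\uparrow e$ combined with $e<\sqrt{8}$. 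Together, these two cases complete the proof.
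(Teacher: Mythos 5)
Since the paper does not prove this lemma at all (it is quoted from Cover and Thomas), your self-contained derivation is welcome, and its skeleton is sound: Robbins' form of Stirling's formula gives exactly the identity you state, the problem correctly reduces to $\sqrt{\pi}/2\le E\le\sqrt{2}$ for $E=\exp(\lambda_n-\lambda_{\kappa n}-\lambda_{(1-\kappa)n})$, the upper bound $E\le e^{1/24}<\sqrt{2}$ is fine, and so is the interior-regime estimate $E\ge\exp(-1/(12n\kappa(1-\kappa)))\ge e^{-1/12}>\sqrt{\pi}/2$. (Do add the one-line check that your two regimes are exhaustive: if $2\le\kappa n\le n-2$ then $n\ge 4$ and $n\kappa(1-\kappa)=\kappa n\,(n-\kappa n)/n\ge 2(n-2)/n\ge 1$.)

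The genuine gap is in the boundary case $\kappa n\in\{1,n-1\}$, where you must show $\bigl(\tfrac{n}{n-1}\bigr)^{n-1}\le\sqrt{8(n-1)/n}$ and justify it for $n\ge 3$ by ``$(1+1/(n-1))^{n-1}\uparrow e$ together with $e<\sqrt{8}$''. That reasoning only bounds the left side by $e$ and compares $e$ with $\sqrt{8}$, but the right side is $\sqrt{8(n-1)/n}$, which is \emph{smaller} than $\sqrt 8$ and in fact smaller than $e$ unless $8(n-1)/n\ge e^2$, i.e.\ $n\ge 14$. So as stated your argument proves nothing for $3\le n\le 13$, even though the inequality is true there (at $n=3$ it reads $2.25\le\sqrt{16/3}\approx 2.309$, which is uncomfortably close). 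The repair is easy: either verify $n=3,\dots,13$ directly, or argue quantitatively, e.g.\ rewrite the claim as $(n-\tfrac12)\ln(1+\tfrac{1}{n-1})\le\tfrac32\ln 2$ and apply $\ln(1+x)\le x-\tfrac{x^2}{2}+\tfrac{x^3}{3}$ (the paper's Lemma~\ref{lem:approx ln}) with $x=1/(n-1)$, which reduces it to $1+\tfrac{1}{3(n-1)^2}\le\tfrac32\ln 2$, valid for all $n\ge 4$, leaving only $n=3$ (and the equality case $n=2$) to check by hand.
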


We now prove Lemma~\ref{lem:binomLower}.

\binomLower*

\begin{proof}
    Let $q:=1-p$.
    We claim that if $i$ is an integer satisfying that
    $(p-\varepsilon)n-\sqrt{n}<i\le(p-\varepsilon)n$, then
    \begin{linenomath*}\[
       \binom{n}{i}p^iq^{n-i}\ge\frac{1}{\sqrt{2n}} 
        \exp\left(
            -\frac{n\varepsilon^2}{2pq}
            -\frac{3\sqrt{n\varepsilon^2}}{2pq}
            -\frac{4 n\varepsilon^3}{p^2q^2}
        \right).
    \]\end{linenomath*}
    Since $1/\sqrt{n}\le\varepsilon\le p/2$, we have that $(p-\varepsilon)n-\sqrt{n}=(p-\varepsilon-1/\sqrt{n})n\ge0$.
    Thus, $i$ is positive.
    By Lemma~\ref{lem:approxBC} and the inequality $pq\le1/4$, we have that
    \begin{linenomath*}\[ 
       \binom{n}{i}p^iq^{n-i}\ge
        \frac{1}{\sqrt{2n}} 
        e^{n\cdot H(i/n,1-(i/n))}p^i q^{n-i}
       =\frac{1}{\sqrt{2n}} e^{-n D((i/n)\| p)}.
    \]\end{linenomath*}
    Let $\delta:=p-(i/n)$.
    Note that $\delta<p$ because $i$ is positive.
    Since $(p-\varepsilon)n-\sqrt{n}\le i\le(p-\varepsilon)n$ and $1/\sqrt{n}\le\varepsilon\le\min\{p/2,q\}$, we deduce that
    \begin{linenomath*}\[
        \varepsilon\le\delta\le\varepsilon+\frac{1}{\sqrt{n}}\le2\varepsilon\le\min\{p,2q\}.
    \]\end{linenomath*}
    Since $0<\delta/p<1$, by Lemma~\ref{lem:approx ln},
    \begin{linenomath*}\begin{align*}
        D(p-\delta \| p)
        &= (p-\delta)\ln \frac{p-\delta}{p}+(1-p+\delta)\ln \frac{1-p+\delta}{1-p}\\
        &\le
        (p-\delta)\left(-\frac{\delta}{p}-\frac{\delta^2}{2 p^2} \right)
        +
        (q+\delta)\left( \frac{\delta}{q}-\frac{\delta^2}{2q^2}
        +\frac{\delta^3}{3q^3} \right) 
        \\
        &=\delta^2 \left(\frac1{2p}+\frac1{2q}\right)+\delta^3 \left(\frac{1}{2p^2}-\frac1{2q^2}+\frac{1}{3q^2}\right)+\frac{\delta^4}{3q^3}\\
        &=\frac{\delta^2}{2pq}+\delta^3 \left( \frac{1}{2p^2}+\frac{1}{2q^2} \right)-\frac{2\delta^3}{3q^2}+\frac{\delta^4}{3q^3}\\
        &\le\frac{\delta^2}{2pq}+\frac{\delta^3}{2p^2q^2}
        &\text{because }\delta\le2q\\
        &\le\frac{\varepsilon^2}{2pq}+\frac{2\sqrt{n\varepsilon^2}+1}{2pqn}+\frac{\delta^3}{2p^2q^2}
        &\text{because }\delta\le\varepsilon+\frac{1}{\sqrt{n}}\\
        &\le\frac{\varepsilon^2}{2pq}+\frac{2\sqrt{n\varepsilon^2}+1}{2pqn}+\frac{4\varepsilon^3}{p^2q^2}
        &\text{because }\delta\le2\varepsilon\\
        &\le\frac{\varepsilon^2}{2pq}+\frac{3\sqrt{n\varepsilon^2}}{2pqn}+\frac{4\varepsilon^3}{p^2q^2}
        &\text{because }n\varepsilon^2\ge1.
    \end{align*}\end{linenomath*}
    Thus, the claim holds.

    The number of integers $i$ with $(p-\varepsilon)n-\sqrt{n}<i\le(p-\varepsilon)n$ is at least $\lfloor\sqrt{n}\rfloor>\sqrt{n}-1\ge\frac{1}{2}\sqrt{n}$ because $n\ge4$.
    Therefore, we deduce that
    \begin{linenomath*}\begin{align*}
        \Pr[\mathcal{B}(n,p)\le(p-\varepsilon)n]
        &\ge\sum_{(p-\varepsilon)n-\sqrt n<i\le(p-\varepsilon)n}\binom{n}{i} p^i q^{n-i}\\
        &\ge\frac{\sqrt{n}}{2}\cdot\frac{1}{\sqrt{2n}}\exp\left(-\frac{n\varepsilon^2}{2pq}-\frac{3\sqrt{n\varepsilon^2}}{2pq}-\frac{4 n\varepsilon^3}{p^2q^2}\right),
    \end{align*}\end{linenomath*}
    and this completes the proof.
\end{proof}

\section{Proof of Lemma~\ref{lem:ab}}\label{app:inequalities}

\begin{figure}[t]
    \centering
    \includegraphics[scale=0.18]{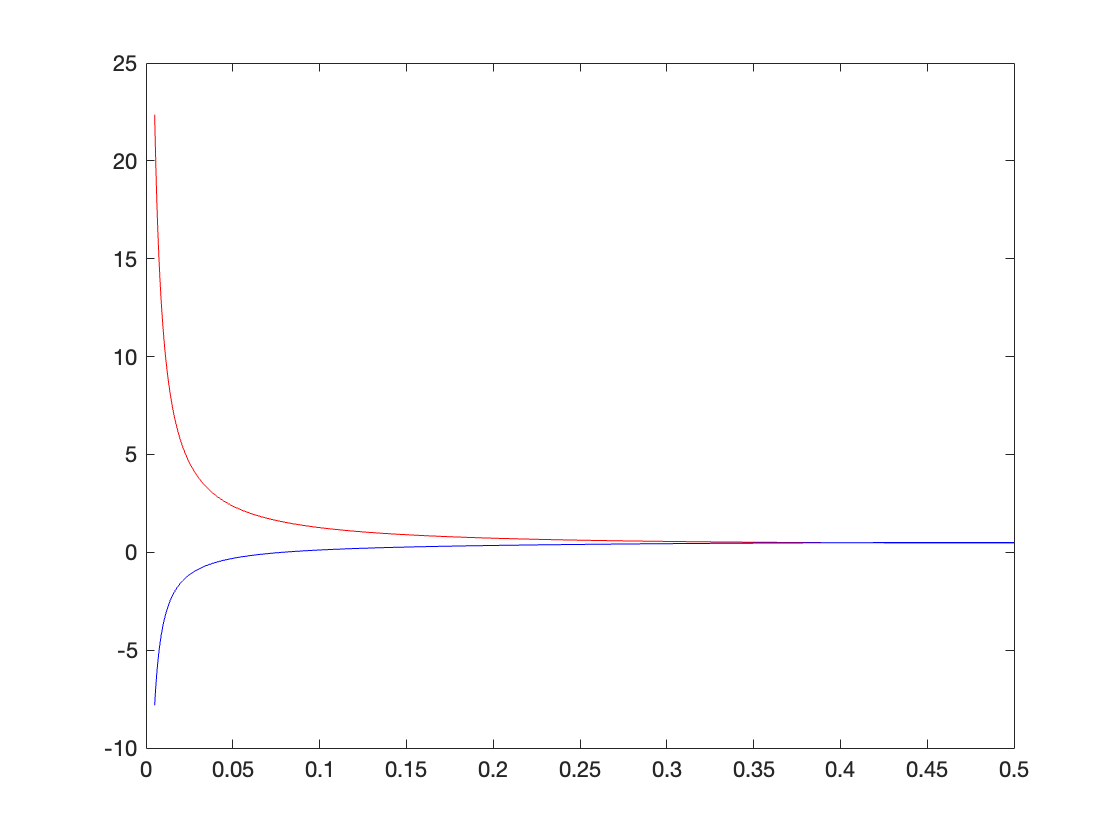}
    \hspace{0.1cm}
    \includegraphics[scale=0.18]{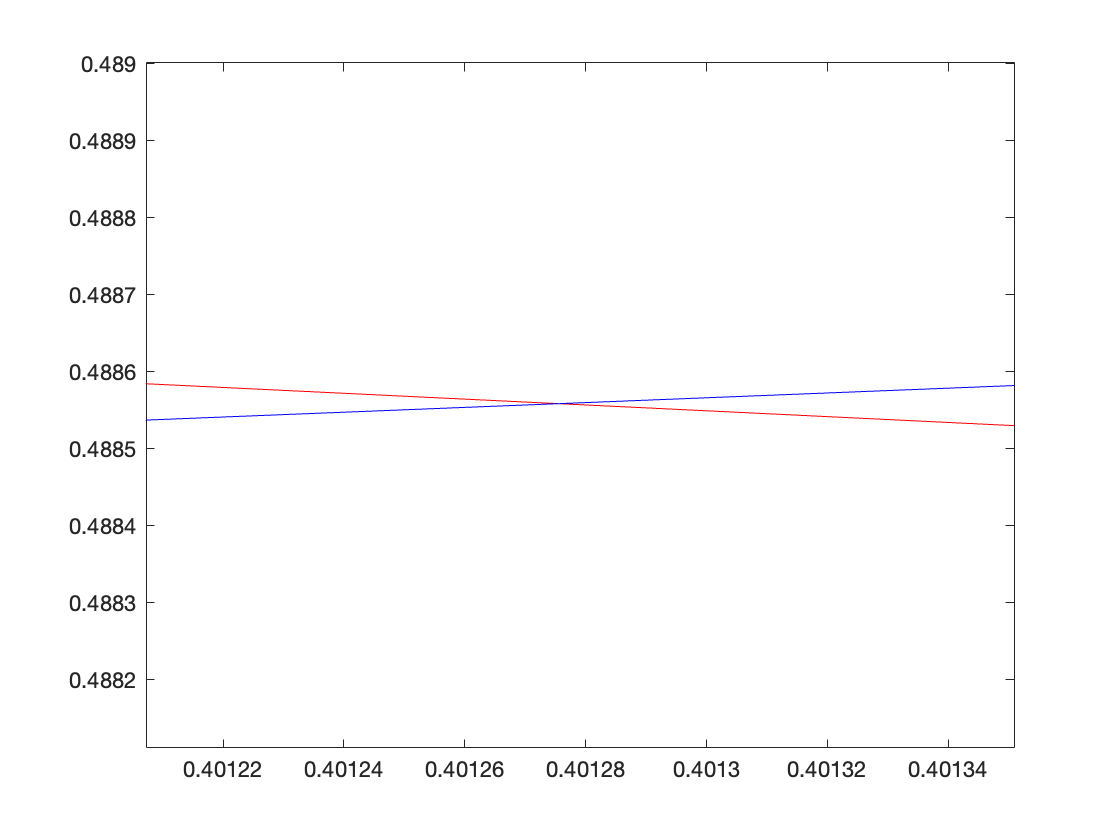}
    \caption{The red lines are graphs of $y=\alpha(x)$ and the blue lines are graphs of $y=\beta(x)$.}
    \label{fig:AlphaBeta}
\end{figure}

We provide the proof of Lemma~\ref{lem:ab}.
See Figure~\ref{fig:AlphaBeta} for graphs of $\alpha(x)$ and $\beta(x)$.

\lemab*

\begin{proof}
    (i)
    We first show that $\alpha$ is decreasing on $(0,1/2]$.
    Let $\alpha_2:(0,1)\to\mathbb{R}$ with $\alpha_2(z):=\frac{1}{z(9-2z)}$.
    For all $p\in(0,1)$ and $q:=1-p$, we have that $1-p^3-q^3=3pq$ and
    \begin{linenomath*}\begin{align*}
        1-p^6-q^6
       =\sum_{i=1}^5\binom{6}{i} p^i q^{6-i}
        &= 6pq \left( p^4+\frac{5}{2}p^3 q+\frac{10}{3}p^2 q^2+\frac{5}{2}p q^3+q^4 \right)\\
        &= 6pq \left((p+q)^4-\frac{3}{2}pq \left( p^2+\frac{16}{9}pq+q^2 \right)\right)\\
        &= 6pq \left( 1-\frac{3}{2}pq \left( 1-\frac{2}{9}pq \right)\right)\\
        &= 6pq-(pq)^2(9-2pq)=6pq-\frac{pq}{\alpha_2(pq)}.
    \end{align*}\end{linenomath*}
    Therefore, $\alpha(x)=\alpha_2(x(1-x))$.
    Note that $\alpha_2'(z)=\frac{4z-9}{z^2(9-2z)^2}<0$ for all $z\in(0,1)$.
    Since $x\in(0,1/2]$, we have that $\alpha'(x)=(1-2x)\cdot\alpha_2'(x(1-x))\le0$, where the equality holds only if $x=1/2$.
    Hence, $\alpha$ is decreasing on $(0,1/2]$.

    We now show that $\beta$ is increasing on $(0,1/2]$.
    Let $\beta_2:(0,1)\to\mathbb{R}$ with
    \begin{linenomath*}\[
        \beta_2(z) := \frac{2 z^5-36 z^4+103 z^3-96 z^2+34 z-2}{4z( z^4-18 z^3+51 z^2-44 z+12 )}.
    \]\end{linenomath*}
    Similar to the computation of $1-p^6-q^6$, we have that
    \begin{linenomath*}\begin{align*}
        1-p^8-q^8
        &= 8pq \left( 1-\frac{5}{2}pq \left( 1-\frac{4}{5}pq \left( 1-\frac{1}{8}pq \right)\right)\right), \\
        1-p^{12}-q^{12}
        &= 12pq \left( 1-\frac{9}{2}pq \left( 1-\frac{56}{27}pq \left( 1-\frac{15}{16}pq \left( 1-\frac{12}{35} pq \left( 1-\frac{1}{18}pq \right)\right)\right)\right)\right).
    \end{align*}\end{linenomath*}
    One can check that $\beta(x)=\beta_2(x(1-x))$ and
    \begin{linenomath*}\[
        \beta_2'(z)=\frac{z^7-21z^6+164z^5-457z^4+570z^3-325z^2+88z-12}{-2z^2(z^4-18z^3+51z^2-44z+12)^2}.
    \]\end{linenomath*}
    For all $z\in(0,1)$, let $k(z)$ be the numerator of $\beta_2'(z)$, that is,
    \begin{linenomath*}\[
        k(z) := z^7-21z^6+164z^5-457z^4+570z^3-325z^2+88z-12.
    \]\end{linenomath*}
    One can check the following.
    \begin{itemize}
        \item $k(1/4)$, $k''(1/4)$, and $k^{(4)}(1/4)$ are negative.
        \item $k'(1/4)$, $k^{(3)}(1/4)$, and $k^{(5)}(1/4)$ are positive.
        \item $k^{(6)}(z)=5040z-15120 <0$ for all $z \in(0,1/4]$.
    \end{itemize}
    Thus, by the Taylor expansion of $k(z)$ at $1/4$, we have that $k(z)<0$ for all $z\in(0,1/4]$, and therefore $\beta_2'(z) >0$ for all $z\in(0,1/4]$.
    Since $x\in(0,1/2]$, we have that $\beta'(x)=(1-2x)\cdot \beta'(x(1-x))\ge0$, where the equality holds only if $x=1/2$.
    Hence, $\beta$ is increasing on $(0,1/2]$.
    
    (ii)
    It can be easily checked that $\alpha(0.4012)>\beta(0.4012)$ and $\alpha(0.4013)<\beta(0.4013)$.
    Thus, (ii) is derived by (i).
    
    (iii)
    By (i) and (ii),
    $p^*-x$ have the same sign with $\alpha(x)-\beta(x)$.
    By Lemma~\ref{lem:pq}\ref{item:pq-1}, $\frac{1-x^{12}-(1-x)^{12}}{12}<\frac{1-x^8-(1-x)^8}{8}$, and therefore 
    $\alpha(x)-\beta(x)$ has the same sign with 
    \begin{linenomath*}\begin{align*}
        &(3(1-x^8-(1-x)^8)-2(1-x^{12}-(1-x)^{12}))(\alpha(x)-\beta(x))\\   
        &=(1-x^8-(1-x)^8)(3\alpha(x)-1)+(1-x^{12}-(1-x)^{12})(1-2\alpha(x))
        -2x(1-x).
    \end{align*}\end{linenomath*}
    
    (iv)
    Let $z := x(1-x)$.
    Recall that $\alpha(x)=\alpha_2(z)=\frac{1}{z(9-2z)}$.
    Then $\frac{1-\alpha(x)}{2}<2x(1-x)$ if and only if $0<1+z(4z-1)(9-2z)$, which holds for all $z\in(0,1/4]$.
    Since $x\in(0,1/2]$, we have that $z\in(0,1/4]$, and therefore (iv) holds.
\end{proof}

\end{document}